\documentclass[a4paper,11pt]{amsart}
\usepackage{amsthm,amssymb,mathtools,amsfonts,latexsym,rawfonts,amsmath, mathrsfs, lscape}
\usepackage{stmaryrd,tikz,pgfplots}
\usepackage{marginnote}  
\usepackage[backref=page]{hyperref}
\usepackage[top=1.5in, bottom=0.9in, left=0.9in, right=0.9in]{geometry}
\usepackage{hyperref}
\usepackage[textsize=tiny]{todonotes}
\usepackage{bigints}
\usepackage{bm}
%% Larger Margins for our side notes
%\addtolength{\marginparwidth}{+.875in}
%\addtolength{\oddsidemargin}{-.875in}
%\addtolength{\evensidemargin}{+.875in}
%\addbibresource{\hypercomplex.bib}

\newcommand{\Marginpar}[1]{\marginpar{\tiny{#1}}}
\newcommand{\Note}[1]{{\par\noindent\hrulefill\par\tiny{#1}\par\noindent\hrulefill\par}}
\newcommand{\Detail}[1]{{#1}}
% Turn side comments off
\renewcommand{\Marginpar}[1]{}
\renewcommand{\Note}[1]{}
\renewcommand{\Detail}[1]{}
\renewcommand*{\backref}[1]{}
\renewcommand*{\backrefalt}[4]{%
    \ifcase #1 (Not cited.)%
    \or        (Cited on page~#2.)%
    \else      (Cited on pages~#2.)%
    \fi}

\usepackage{hyperref}
\usepackage[all]{xy}
\usepackage{tensor}

\usepackage{array, tabularx}

\usepackage{booktabs} %for nice tables
\usepackage{siunitx}

\usepackage{tikz-cd}

\allowdisplaybreaks

\newtheorem{thm}{Theorem}
\newtheorem*{thm*}{Theorem}
\newtheorem{prop}[thm]{Proposition}
\newtheorem{lem}[thm]{Lemma}
\newtheorem*{lem*}{Lemma}
\newtheorem{cor}[thm]{Corollary}
\newtheorem*{cor*}{Corollary}

\theoremstyle{definition}
\newtheorem{defn}[thm]{Definition}
\newtheorem*{defn*}{Definition}
\newtheorem{ex}[thm]{Example}
\newtheorem{rem}[thm]{Remark}

\renewcommand{\[}{\begin{equation*}}
\renewcommand{\]}{\end{equation*}}

\DeclareMathOperator\tr{tr}

\def \tr {\mbox{tr}}

\renewcommand{\bar}{\overline}

\begin{document}
\parskip1mm

\title[Generalized almost-K\"ahler--Ricci solitons]{Generalized almost-K\"ahler--Ricci solitons}

\author{Michael Albanese}
\address{Pure Mathematics, University of Waterloo, Waterloo, ON N2L 3G1, Canada.}
\email{m3albane@uwaterloo.ca}

\author{Giuseppe Barbaro}
\address{Institut for Matematik, Aarhus University, Ny Munkegade 118, 8000 Aarhus C, Denmark.}
\email{g.barbaro@math.au.dk}

\author{Mehdi Lejmi}
\address{Department of Mathematics, Bronx Community College of CUNY, Bronx, NY 10453, USA.}
\email{mehdi.lejmi@bcc.cuny.edu}

\thanks{The second author is supported by GNSAGA of INdAM. The third author is supported by the Simons Foundation Grant \#636075.}

\keywords{}

\subjclass[2010]{53C55 (primary); 53B35 (secondary)} 

\maketitle

\begin{abstract}
	We generalize K\"ahler--Ricci solitons to the almost-K\"ahler setting as the zeros of Inoue's moment map~\cite{MR4017922}, and show that their existence is an obstruction to the existence of first-Chern--Einstein almost-K\"ahler metrics on compact symplectic Fano manifolds.
	We prove deformation results of such metrics in the $4$-dimensional case. Moreover, we study the Lie algebra of holomorphic vector fields on $2n$-dimensional compact symplectic Fano manifolds admitting generalized almost-K\"ahler--Ricci solitons. In particular, we partially extend Matsushima's theorem~\cite{MR0094478} to compact first-Chern--Einstein almost-K\"ahler manifolds.
\end{abstract}
  
\section{introduction}

	On a symplectic manifold $(M,\omega)$ of real dimension $2n$, an almost-complex structure $J$ is compatible with $\omega$ if $\omega(J\cdot, J\cdot) = \omega(\cdot, \cdot)$ and the $2$-tensor $g:=\omega(\cdot,J\cdot)$ induced by $\omega$ and $J$ is a Riemannian metric -- we call such a metric an almost-K\"ahler metric. 
	If $J$ is integrable, then $g$ is a K\"ahler metric; if $J$ is not integrable, we say $g$ is strictly almost-K\"ahler. On an almost-K\"ahler manifold, the canonical Hermitian connection $\nabla$~\cite{MR66733,MR0165458,MR1456265} is the unique connection preserving the almost-K\"ahler structure and whose torsion is $J$-anti-invariant (equivalently, has no $(1,1)$-part). The curvature of the induced canonical Hermitian connection on the anti-canonical bundle is of the form $\sqrt{-1}\rho^\nabla$, where $\rho^\nabla$ is a real $2$-form which represents $2\pi c_1(M).$ The trace of $\rho^\nabla$ with respect to $\omega$ is the Chern scalar curvature $s^c$. When $g$ is K\"ahler, $\nabla$ is the Levi-Civita connection, $\rho^{\nabla}$ is the Ricci form, and $s^c$ coincides with the (half of) Riemannian scalar curvature.

	We consider the space $AK_\omega$ of almost-complex structures compatible with $\omega.$ It turns out that
	the natural action of $Ham(M, \omega)$, the Hamiltonian symplectomorphism group, on $AK_\omega$ is Hamiltonian with moment map given by $s^c$~\cite{MR1073369,MR1622931}. It follows that the zeros of the moment map correspond to constant Chern scalar curvature almost-K\"ahler metrics. In particular, in the K\"ahler setting, they are constant scalar curvature K\"ahler metrics.
	These results have been extended to a more general setting in~\cite{MR4017922}.
	In that work, Inoue fixed a compact subgroup $G$ of $Ham(M,\omega)$, and considered the group $Ham^G(M,\omega)$ of Hamiltonian symplectomorphisms commuting with $G$, and the space $AK^G_\omega$ of $G$-invariant almost-complex structures compatible with $\omega$.
	He proved that the action of $Ham^G(M,\omega)$ on $AK^G_\omega$ is Hamiltonian with moment map given by
	$$s^c_\xi:=s^c-n-2\Delta^gf_\xi+2f_\xi-2|df_\xi|_g^2,$$ 
	where $\Delta^g$ is the Riemannian Laplacian with respect to the metric $g$ and $f_\xi$ is a (normalized) Hamiltonian potential of $\xi$ which is a fixed element in the center of the Lie algebra of $G$. 
	On a compact K\"ahler Fano manifold, Inoue proved that the zeros of this moment map correspond to K\"ahler--Ricci solitons~\cite[Proposition 3.2]{MR4017922}. 
	Consequently, motivated by the analogy with K\"ahler--Ricci solitons, if an almost-K\"ahler metric is induced by an almost-complex structure in $AK^G_\omega$, we call it a {\it{generalized almost-K\"ahler--Ricci soliton}} (GeAKRS for short) if it satisfies the condition $$s^c_\xi\equiv 0.$$ 
		
	If the vector field $\xi$ is trivial, a GeAKRS reduces to an almost-K\"ahler metric with constant Chern scalar curvature.
	On the other hand, we prove (in Proposition~\ref{positivity}) that the existence of GeAKRS with non-trivial $\xi$ on a compact symplectic monotone manifold (i.e. $2\pi c_1(M)=\lambda[\omega],$ for some constant $\lambda$) implies that $\lambda$ is positive, and then $(M,\omega)$ is a symplectic Fano manifold. 
	Working on symplectic Fano manifolds, we provide a generalization of~\cite[Proposition 3.2]{MR4017922} to the almost-K\"ahler setting. 
	Indeed, in Proposition~\ref{GeAKRS-exact-dc}, we show that on a compact symplectic Fano manifold with $\rho^\nabla-\omega=da,$ for some $G$-invariant $1$-form $a$, the existence of a GeAKRS is equivalent to the fact that the $d^c:=JdJ^{-1}$-exact part of the Hodge decomposition of $a$ with respect to a twisted Laplacian is given by $-d^cf_\xi$. 
	Furthermore, it turns out that on a compact symplectic Fano manifold, the existence of a GeAKRS with non-trivial $\xi$ is an obstruction to the existence of a constant Chern scalar curvature almost-K\"ahler metric in $AK^G_\omega$.
	Consequently, we obtain an obstruction to the existence of first-Chern--Einstein almost-K\"ahler metrics (i.e. almost-K\"ahler metrics satisfying $\rho^\nabla=\lambda\omega,$ for constant $\lambda$), just as K\"ahler--Ricci solitons are obstructions to the existence of K\"ahler--Einstein metrics (see for example~\cite{MR1768112,MR1817785,MR2483362}).
	\begin{thm*}[Theorem~\ref{obstruction-constant-chern}]
		Let $(M,\omega)$ be a compact symplectic Fano manifold. Suppose that there exists $J\in AK^G_\omega$ which induces a GeAKRS with respect to a vector field $\xi$ and $\tilde{J}\in AK^G_\omega$ which induces an almost-K\"ahler metric of constant Chern scalar curvature. Then $\xi\equiv 0.$
	\end{thm*}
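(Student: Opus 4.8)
The plan is to integrate the soliton equation against an exponential weight in $f_\xi$ and to compare the two metrics through a Futaki-type quantity insensitive to the choice of compatible structure. First I would record two structural facts. As $J,\tilde J\in AK^G_\omega$ are $G$-invariant and $\xi$ lies in the centre of $\Lie(G)$, the field $\xi$ is Hamiltonian and Killing for both $g_J$ and $g_{\tilde J}$, sharing a single potential $f_\xi$ (determined by $\iota_\xi\omega=df_\xi$ and the normalization $\int_M f_\xi\,\tfrac{\omega^n}{n!}=0$, both independent of $J$ since $\omega$ is common). Moreover $\xi$ vanishes at any critical point of $f_\xi$, a fact I will use to fix an additive constant later.

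Next comes the weighted identity. Rewriting $s^c_\xi\equiv 0$ as $s^c-n=2\Delta^g f_\xi-2f_\xi+2|df_\xi|^2_g$ and using $2\Delta^g f_\xi+2|df_\xi|^2_g=-2e^{f_\xi}\Delta^g(e^{-f_\xi})$ (for $\Delta^g=d^*d\ge 0$), I would multiply by $e^{-f_\xi}$ and integrate, so that the term $\int_M\Delta^g(e^{-f_\xi})\,\tfrac{\omega^n}{n!}$ drops and
\[
\int_M (s^c-n)\,e^{-f_\xi}\,\tfrac{\omega^n}{n!}=-2\int_M f_\xi\,e^{-f_\xi}\,\tfrac{\omega^n}{n!}.
\]
For the constant-Chern-scalar-curvature metric the left-hand side vanishes outright, since $s^c_{\tilde J}\equiv n$.

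The core step is to promote $\int_M(s^c-n)e^{-f_\xi}\tfrac{\omega^n}{n!}$ to a $J$-independent quantity. Writing $s^c\tfrac{\omega^n}{n!}=\rho^\nabla\wedge\tfrac{\omega^{n-1}}{(n-1)!}$ with $[\rho^\nabla]=2\pi c_1(M)=[\omega]$, the relevant invariant is the equivariant pairing $\int_M(\rho^\nabla+h_\xi)\wedge e^{-(\omega+f_\xi)}$, where $dh_\xi=\iota_\xi\rho^\nabla$ is the moment map of $\xi$ on the anti-canonical bundle; this is an equivariant characteristic number and hence agrees for $g_J$ and $g_{\tilde J}$. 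Concretely one writes $\rho^\nabla_J-\rho^\nabla_{\tilde J}=d\beta$ with $\beta$ $G$-invariant, integrates by parts, and invokes the pointwise identity $\iota_\xi\omega\wedge\beta\wedge\tfrac{\omega^{n-1}}{(n-1)!}=-(\iota_\xi\beta)\tfrac{\omega^n}{n!}$, the residual constant being killed because $\xi$ has a zero. Feeding the weighted identity (for $g_J$) and $s^c_{\tilde J}=n$ (for $g_{\tilde J}$) into this invariance should collapse the comparison to $\int_M f_\xi\,e^{-f_\xi}\,\tfrac{\omega^n}{n!}=0$. Together with $\int_M f_\xi\,\tfrac{\omega^n}{n!}=0$ this yields $\int_M f_\xi(1-e^{-f_\xi})\,\tfrac{\omega^n}{n!}=0$, and since $t(1-e^{-t})\ge 0$ with equality only at $t=0$, the integrand vanishes pointwise, forcing $f_\xi\equiv 0$ and hence $\xi\equiv 0$.

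I expect the invariance step to be the real difficulty. In the integrable case it is the classical $J$-independence of the Futaki invariant, but when $J$ is non-integrable the anti-canonical moment map $h_\xi$ no longer equals $f_\xi$: writing $\rho^\nabla-\omega=da$ as in Proposition~\ref{GeAKRS-exact-dc} one finds $h_\xi=f_\xi-\iota_\xi a$ up to a constant, so the passage to $\int_M f_\xi e^{-f_\xi}\tfrac{\omega^n}{n!}=0$ carries correction terms $\int_M e^{-f_\xi}\,\iota_\xi(a_J-a_{\tilde J})\,\tfrac{\omega^n}{n!}$. Controlling these — presumably via the $d^c$-exact characterizations of the GeAKRS and of the constant-Chern-scalar-curvature condition from Proposition~\ref{GeAKRS-exact-dc} — is where the genuine work lies; the integrations and the final convexity argument are routine by comparison.
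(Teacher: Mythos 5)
Your overall architecture (integrate the soliton identity, invoke a Futaki--type invariance, compare with the constant--Chern--scalar--curvature metric) parallels the paper's, but the specific invariant you rely on does not exist, and this is a fatal gap rather than a deferred technicality. The quantity $\int_M(s^c-n)e^{-f_\xi}\omega^n$ is \emph{not} independent of $J\in AK^G_\omega$. By Stokes it equals $\int_M e^{-f_\xi}\,a(\xi)\,\omega^n$ where $\rho^\nabla-\omega=da$, and the failure is visible already in the toric picture: on $S^2$ with $\omega=dz\wedge dt$ and symplectic potential profile $H(z)$ on $[-1,1]$, the quantity is proportional to $\int_{-1}^1\bigl(-\tfrac12 H''-1\bigr)e^{-z}\,dz$, whose $H$-dependent part reduces after two integrations by parts (the boundary terms being fixed by the Abreu conditions) to $-\tfrac12\int_{-1}^1 He^{-z}\,dz$; perturbing $H$ by $\epsilon(1-z^2)^2$ preserves $\omega$, the torus invariance and the boundary conditions but changes this integral. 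The same computation with weight $e^{-2f_\xi}$ fails as well. The equivariant characteristic number $\int_M(\rho^\nabla+h_\xi)\wedge e^{-(\omega\mp f_\xi)}$ that you propose as the vehicle for invariance \emph{is} metric-independent, but vacuously so: on a symplectic Fano manifold $\rho^\nabla+h_\xi$ and the equivariant extension of $\omega$ lie in the \emph{same} equivariant class, so upon expanding, the $s^c$-term cancels against the $\iota_\xi a$-term hidden in $h_\xi=\mp f_\xi-\iota_\xi a+\mathrm{const}$, and the pairing collapses to $\int_M e^{\pm f_\xi}(n\pm f_\xi-c)\,\omega^n$, which involves neither $s^c$ nor $J$. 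The ``correction terms'' you flag therefore cannot be controlled: they exactly consume the information you need.

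The invariance that does hold, and that the paper uses, pairs $s^c$ against Hamiltonians of $\mathfrak{g}$ rather than against the constant function, and with no exponential weight. Integrating Equation~(\ref{conformal-equation}) over $M$ gives $\int_M|df_\xi|^2_g\,\omega^n=\int_M f_\xi\,(s^c-n)\,\omega^n$, and the unweighted $L^2$-projection of $s^c$ onto $\mathfrak{g}_\omega$ is independent of $J\in AK^G_\omega$ by \cite[Lemma 3.4]{MR2747965}; since $\tilde J$ has constant Chern scalar curvature this projection vanishes, whence $df_\xi\equiv0$ directly, with no convexity argument needed. Two smaller defects in your write-up: the final step uses the normalization $\int_M f_\xi\,\omega^n=0$, whereas the soliton equation you integrate holds for $f_\xi$ normalized by $\int_M f_\xi e^{-2f_\xi}\,\omega^n=0$ (this is repairable, e.g.\ by observing that $s\mapsto\int_M f_\xi e^{-sf_\xi}\omega^n$ is strictly decreasing unless $f_\xi\equiv0$ and so cannot vanish at two values of $s$); and your convention $\iota_\xi\omega=df_\xi$ is opposite to the paper's $\omega(\xi,\cdot)=-df_\xi$, which would propagate sign errors through the equivariant bookkeeping.
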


	In order to construct non-trivial examples, we study the linearized operator associated to the GeAKRS equation, and, in the spirit of \cite{MR1053910,MR1274118} (see also \cite{MR2661166,MR2807093,MR3663320,MR3897025}), we prove that these metrics can be obtained as deformations of K\"ahler--Ricci solitons.
	More precisely, we show a deformation result in real dimension $4$ along any smooth path of almost-complex structures $J_t$ compatible with $\omega$ which satisfies a technical hypothesis on  $h^-_{J_t}$, the dimension of harmonic $J_t$-anti-invariant $2$-forms.
	\begin{thm*}[Theorem~\ref{deformations}]
		Let $(M,\omega,J,g)$ be a $4$-dimensional compact K\"ahler manifold such that the metric is a K\"ahler--Ricci soliton
		with respect to the vector field $\xi$. Let $T$ be a maximal torus in $Ham(M,\omega)$ such that the Lie algebra of $T$
		contains $\xi$. Let $J_t$ be any smooth family of $T$-invariant, $\omega$-compatible almost-complex structures such that $J_0=J$ and for small $t$, we have $h^-_{J_t}=b^+-1.$ Then, there exists a family of $T$-invariant, $\omega$-compatible almost-complex structures $\tilde{J}_t$ such that $(\omega,\tilde{J}_t)$ is GeAKRS for small enough $t$ and
		$\tilde{J}_0=J_0$. Moreover, $\tilde{J}_t$ is diffeomorphic to $J_t$ for each $t.$
	\end{thm*}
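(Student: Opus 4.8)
The plan is to realize the GeAKRS equation $s^c_\xi\equiv 0$ as the vanishing of a nonlinear elliptic operator and to solve it near the K\"ahler--Ricci soliton $J_0$ by the implicit function theorem, in the spirit of LeBrun--Simanca \cite{MR1053910,MR1274118}. Since we must produce $\tilde J_t$ that is \emph{diffeomorphic} to $J_t$ and $\omega$-compatible, I would look for $\tilde J_t$ inside the $\mathrm{Ham}^T(M,\omega)$-orbit of $J_t$: writing $\tilde J_t=(\varphi^{h})^*J_t$ for the time-one flow $\varphi^{h}$ of a $T$-invariant Hamiltonian $h$ keeps $\tilde J_t$ in $AK^T_\omega$, diffeomorphic to $J_t$, and reduces the problem to solving, for each small $t$, the scalar equation $s^c_\xi\big((\varphi^{h})^*J_t\big)=0$ for a $T$-invariant function $h$. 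This is the natural GIT-type reformulation: $s^c_\xi$ is Inoue's moment map, so a GeAKRS in the orbit is exactly a zero of the moment map there, and one could equally phrase the equation through the $d^c$-exact part of $a$ as in Proposition~\ref{GeAKRS-exact-dc}.

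The next step is to linearize at $t=0$, $h=0$. Because $J_0$ already satisfies $s^c_\xi\equiv 0$ and $\tfrac{d}{ds}\big|_0(\varphi^{sh})^*J_0=\mathcal{L}_{X_h}J_0$, the linearization is the composition $h\mapsto Ds^c_\xi|_{J_0}(\mathcal{L}_{X_h}J_0)=:L_\xi h$, a fourth-order, formally self-adjoint (with respect to an appropriately weighted volume, with weight determined by $f_\xi$) elliptic operator --- the $\xi$-twisted Lichnerowicz operator. Its kernel consists of the $T$-invariant $\xi$-holomorphy potentials; on a Fano soliton these are nonzero, but by the paper's Matsushima-type description of the holomorphic vector fields, together with the maximality of $T$ and $\xi\in\Lie(T)$, they reduce to the potentials coming from $\Lie(T)$ itself. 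These are gauge directions --- moving along them is reparametrization by the fixed torus and leaves $s^c_\xi$ unchanged --- so after quotienting them out (equivalently, restricting to the weighted $L^2$-orthogonal complement) $L_\xi$ becomes an isomorphism. Self-adjointness then matches the cokernel, and the range condition for the nonlinear equation holds because its weighted average is the fixed Futaki-type invariant, which vanishes at the soliton.

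The genuinely four-dimensional input, and the place where I expect the main difficulty, is controlling the obstruction as $t$ varies. Unlike the integrable case, deforming $J_t$ inside $AK_\omega$ moves us along anti-invariant directions, and the cokernel of the full linearized map is governed by the harmonic $J_t$-anti-invariant $2$-forms $\H^-_{J_t}$, which in dimension four are self-dual, so $h^-_{J_t}\le b^+$, with the ``K\"ahler value'' $b^+-1$ realized by $J_0$. The hypothesis $h^-_{J_t}=b^+-1$ forces $\dim\H^-_{J_t}$ to stay locally constant along the path --- no jumping of the anti-invariant harmonic forms --- so that the obstruction varies smoothly (indeed trivially, by comparison with $t=0$) and the linearized operator stays surjective onto the relevant function space uniformly for small $t$. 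This uniform surjectivity, rather than invertibility of $L_\xi$ at a single point, is the crux: it is what lets the implicit function theorem be applied with $t$-independent estimates.

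With these ingredients in place, I would set up the map $(h,t)\mapsto s^c_\xi\big((\varphi^{h})^*J_t\big)$ between suitable $T$-invariant H\"older (or Sobolev) completions, note that it is smooth and that $L_\xi$ is invertible modulo the controlled, $t$-uniform cokernel at $(0,0)$, and apply the implicit function theorem to obtain a smooth family $h_t$ with $h_0=0$ solving $s^c_\xi\big((\varphi^{h_t})^*J_t\big)=0$ for $|t|$ small. Elliptic regularity promotes $h_t$, hence $\tilde J_t:=(\varphi^{h_t})^*J_t$, to a smooth family; by construction $\tilde J_0=J_0$, each $\tilde J_t\in AK^T_\omega$ is diffeomorphic to $J_t$, and $(\omega,\tilde J_t)$ is a GeAKRS. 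The decisive step remains the cokernel control of the previous paragraph --- showing that the hypothesis $h^-_{J_t}=b^+-1$ genuinely trivializes the anti-invariant obstruction uniformly in $t$.
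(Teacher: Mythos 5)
There is a genuine gap at the very first step, and it undermines the whole scheme: you propose to search for the soliton inside the \emph{real} orbit of $J_t$ under $Ham^T(M,\omega)$, i.e.\ among $\tilde J_t=(\varphi^{h})^*J_t$ with $\varphi^h$ the flow of a $T$-invariant Hamiltonian. But $s^c_\xi$ is equivariant under this action: $\varphi^h$ preserves $\omega$ and commutes with $T$, so $\varphi^h$ fixes $f_\xi$ (the normalization kills the additive constant), and by naturality of the Chern scalar curvature, the Laplacian and the norm one gets $s^c_\xi\big((\varphi^{h})^*J_t\big)=s^c_\xi(J_t)\circ\varphi^{h}$. Hence the scalar equation $s^c_\xi\big((\varphi^{h})^*J_t\big)=0$ is solvable if and only if $s^c_\xi(J_t)\equiv 0$ already: a real $Ham^T$-orbit either consists entirely of GeAKRS or contains none. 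Consistently, your linearization $L_\xi h=Ds^c_\xi|_{J_0}\big(\mathcal{L}_{X_h}J_0\big)=X_h\big(s^c_\xi(J_0)\big)$ vanishes identically at the soliton; it is \emph{not} the fourth-order twisted Lichnerowicz operator. That operator only appears when one moves in the ``complexified'' directions, which in this moment-map picture means deforming the symplectic form within its cohomology class while keeping $J_t$ fixed, and only afterwards pulling back by a Moser isotopy to return to the fixed $\omega$ --- this is where the diffeomorphism between $\tilde J_t$ and $J_t$ actually comes from, not from a Hamiltonian flow. This is what the paper does, working with $\omega_{t,\phi}=\omega+d\mathbb{G}_tJ_td\Delta^{g_t}\phi$ and the map $\Phi(t,\phi)=\big(t,(Id-\Pi_{\omega})(Id-\Pi_{\omega_{t,\phi}})(s^c_{\xi,t,\phi})\big)$.

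Relatedly, you misplace the role of the hypothesis $h^-_{J_t}=b^+-1$. It is not invoked to control a $t$-dependent cokernel of anti-invariant harmonic forms; it is what makes the deformation space itself available, guaranteeing (following~\cite{MR3331165}) that in dimension four the forms $\omega_{t,\phi}$ are genuine $J_t$-compatible symplectic deformations of $\omega$ in the fixed class, so that $\Phi$ has the right domain. Once that domain is in place, the invertibility of the differential of $\Phi$ at $(0,0)$ is established essentially as you describe for the kernel: $\ker\mathbb{L}_\xi=\mathfrak{t}_\omega$ by maximality of $T$, a weighted integration by parts against $e^{-2f_\xi}$ forces $\dot{\phi}\equiv 0$ on the orthogonal complement of $\mathfrak{t}_\omega$, and the vanishing of the extremal vector field $Z^T_\omega$ together with Proposition~\ref{prop: const proj} removes the Futaki-type obstruction. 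So the second half of your outline is salvageable, but only after replacing the ansatz: as written, $\tilde J_t=(\varphi^{h})^*J_t$ makes the equation unsolvable whenever $J_t$ is not already a GeAKRS.
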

	We also observe that in the toric case the deformation argument can be made without any assumption on $h_{J_t}^-$ (see Lemma \ref{toric-deformations}). Consequently, since there exist K\"ahler--Ricci solitons on any toric K\"ahler Fano manifold~\cite{MR2084775} we obtain that
	\begin{cor*}[Corollary~\ref{existence-toric-dim4}]
		Any $2n$-dimensional compact toric symplectic Fano manifold admits a GeAKRS which is strictly almost-K\"ahler.
	\end{cor*}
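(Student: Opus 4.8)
The plan is to combine the existence of K\"ahler--Ricci solitons on toric K\"ahler Fano manifolds with the toric deformation result. Fix a compact toric symplectic Fano manifold $(M,\omega)$ together with a compatible integrable complex structure $J$ realizing it as a toric K\"ahler Fano manifold, and let $T$ be the maximal torus in $Ham(M,\omega)$ underlying the toric structure. By Wang--Zhu~\cite{MR2084775}, $(M,\omega,J)$ admits a K\"ahler--Ricci soliton $g$ with respect to a vector field $\xi\in\Lie(T)$; since $T$ is abelian, $\xi$ lies in the center of $\Lie(T)$, so taking $G=T$ the metric $g$, induced by $J\in AK^T_\omega$, is a GeAKRS with respect to $\xi$. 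This metric is K\"ahler, so the remaining task is to deform it into a strictly almost-K\"ahler GeAKRS.

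First I would construct a smooth path $J_t$ in $AK^T_\omega$ with $J_0=J$ whose members are non-integrable for all small $t\neq 0$. The tangent space to $AK^T_\omega$ at $J$ consists of the $T$-invariant, $g$-symmetric endomorphisms $A\in\End(TM)$ anti-commuting with $J$, and non-integrable directions abound: the $T$-invariant integrable compatible structures form a proper subset of the infinite-dimensional space $AK^T_\omega$ as soon as $n\geq 2$. Concretely, in action-angle coordinates on the open dense orbit one may perturb the Hessian-type data defining $J$ so that it no longer integrates to a symplectic potential, breaking integrability while preserving both $T$-invariance and $\omega$-compatibility; for such a path the Nijenhuis tensor $N_{J_t}$ is non-vanishing for small $t\neq 0$. (In dimension $2$ every almost-complex structure is integrable, so the statement is understood for $n\geq 2$.)

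Next I would apply Lemma~\ref{toric-deformations} to the path $J_t$. In the toric setting no hypothesis on $h^-_{J_t}$ is required, so the lemma produces a smooth family $\tilde{J}_t\in AK^T_\omega$ with $\tilde{J}_0=J$ such that $(\omega,\tilde{J}_t)$ is a GeAKRS for small $t$ and $\tilde{J}_t$ is diffeomorphic to $J_t$. Integrability is a diffeomorphism invariant, the Nijenhuis tensor pulling back under the diffeomorphism relating $\tilde{J}_t$ and $J_t$, so the non-integrability of $J_t$ forces $\tilde{J}_t$ to be non-integrable for small $t\neq 0$. Fixing any such $t$, the almost-K\"ahler metric induced by $\tilde{J}_t$ is then a strictly almost-K\"ahler GeAKRS, proving the corollary.

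The main obstacle is the second step: exhibiting $T$-invariant, $\omega$-compatible almost-complex structures near the K\"ahler $J$ that are genuinely non-integrable, and verifying that the resulting path meets the standing hypotheses of Lemma~\ref{toric-deformations}. Once such a path is in hand the remainder is formal, resting only on the diffeomorphism invariance of integrability together with the toric deformation lemma.
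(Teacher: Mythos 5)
Your first and last ingredients match the paper exactly: Delzant's theorem gives a compatible integrable $J$ making $(M,\omega)$ toric K\"ahler, and Wang--Zhu gives a K\"ahler--Ricci soliton with $\xi\in\Lie(T)$, hence a K\"ahler GeAKRS. The gap is in how you use Lemma~\ref{toric-deformations}. You treat it as a toric analogue of Theorem~\ref{deformations}: a correction device that takes an arbitrary input path $J_t\in AK^T_\omega$ and returns GeAKRS structures $\tilde{J}_t$ diffeomorphic to $J_t$. The lemma says and proves no such thing. It is a direct construction: in action--angle coordinates one perturbs the matrix $H$ of the involutive-type K\"ahler--Ricci soliton to $H_t=H+tD$, where $D$ is symmetric, compactly supported, and solves the explicit linear first-order system $\frac{1}{2}\sum_i D_{il,i}+\sum_i a_iD_{il}=0$; the resulting metrics satisfy $\rho^\nabla_t-\omega=dJ_tdf_\xi$ exactly (no implicit function theorem, no projection onto $\mathfrak{t}_\omega$, no diffeomorphism relating $\tilde{J}_t$ to a prescribed $J_t$), and for generic $D$ the integrability condition $\partial G_{ij}/\partial z_k=\partial G_{kj}/\partial z_i$ fails, so the metrics are strictly almost-K\"ahler. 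Consequently the step you identify as ``the main obstacle''---exhibiting a path of non-integrable $T$-invariant compatible structures and then feeding it into a correction lemma---is both unsupported by anything in the paper (you would need to prove a toric implicit-function-theorem deformation result without the $h^-_{J_t}$ hypothesis, which is not what the lemma provides) and unnecessary, since the lemma already hands you the non-integrable GeAKRS directly.

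As written, your argument therefore does not close: the existence of the corrected family $\tilde{J}_t$ diffeomorphic to your chosen non-integrable $J_t$ is asserted but not established. The repair is simply to quote the lemma for what it actually says---it produces an infinite-dimensional family of strictly almost-K\"ahler metrics $g_t$ with $\rho^\nabla_t-\omega=dJ_tdf_\xi$, each of which is a GeAKRS---at which point your path construction and the diffeomorphism-invariance-of-integrability argument can be discarded entirely.
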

	By continuing the analogy with the K\"ahler case, we discuss the Lie algebra of (real) holomorphic vector fields on compact $2n$-dimensional almost-K\"ahler manifolds admitting GeAKRS. 
	As a matter of fact, in the K\"ahler setting, a structure theorem for the automorphism group on manifolds admitting K\"ahler--Ricci solitons is given in~\cite{MR1768112} (see also \cite{MR0094478,MR0124009,MR0645743,MR3897025} for different canonical metrics). 
	In particular, Killing vector fields are Hamiltonian on such manifolds. 
	We extend some of these results to symplectic Fano GeAKRS manifolds.
	\begin{thm*}[Theorem~\ref{killing-algebra}]
		Let $(M,\omega,J,g)$ be a compact symplectic Fano GeAKRS manifold of real dimension $2n > 2$ such that $\rho^\nabla-\omega=-dd^cf_\xi.$ Let $X$ be a (real) holomorphic vector field. Then, the Riemannian dual of the vector field $X$ is of the form$$\alpha=d^cu+dh,$$ where $u,h$ are functions normalized by $\int_Mue^{-2f_\xi}\omega^n=\int_Mhe^{-2f_\xi}\omega^n=0$ and satisfying
		\begin{eqnarray}
			\frac{1}{2}\Delta^gu& =&u-g\left(\alpha,d^cf_\xi\right),\label{result1}\\
			\frac{1}{2}\Delta^g h&=&h-g(J\alpha,d^cf_\xi)\label{result2},\\
			D^g_{(dh)^\sharp}\omega&=&0.%\label{result-3}
		\end{eqnarray}
		where $\Delta^{{g}}$ is the Riemannian Laplacian of the metric $g$ and $\sharp$ is the $g$-Riemannian dual. Furthermore, if $X$ is a Killing vector field, then $X$ is a Hamiltonian vector field such that the Killing potential $u$ satisfies 
		\begin{equation*}
			\Delta^{\tilde{g}}u=2u,
		\end{equation*}
		where $\Delta^{\tilde{g}}$ is the Riemannian Laplacian of the conformal metric $\tilde{g}=e^{\frac{-2f_\xi}{n-1}}g.$ In particular, there are no non-trivial Killing vector fields if $2$ is not an eigenvalue of $\Delta^{\tilde{g}}.$
	\end{thm*}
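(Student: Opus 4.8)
The plan is to transport the classical Matsushima--Lichnerowicz analysis, in the soliton refinement due to Tian--Zhu, to the weighted almost-K\"ahler setting, working throughout with the canonical Hermitian connection $\nabla$, the operator $d^c=JdJ^{-1}$, and the drift measure $e^{-2f_\xi}\omega^n$ attached to the potential $f_\xi$. A real holomorphic vector field means $\mathcal L_XJ=0$, and the whole argument consists in translating this single condition, via the soliton identity $\rho^\nabla-\omega=-dd^cf_\xi$, into scalar equations for the potentials of $\alpha=X^\flat$.

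First I would establish the decomposition $\alpha=d^cu+dh$. Starting from $\mathcal L_XJ=0$, I would rewrite the condition in terms of $\alpha$ and of $J\alpha=X\hook\omega$, and then run a Hodge decomposition of one-forms adapted to the twisted Laplacian built from $e^{-2f_\xi}\omega^n$ already employed in Proposition~\ref{GeAKRS-exact-dc}. On a symplectic Fano manifold the harmonic contribution is excluded (no harmonic one-forms survive), so $\alpha$ splits into a $d^c$-exact and a $d$-exact piece; the holomorphicity relation then selects the two potentials, and the normalizations $\int_M u\,e^{-2f_\xi}\omega^n=\int_M h\,e^{-2f_\xi}\omega^n=0$ make $u,h$ unique.

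Next I would substitute the soliton equation into the holomorphicity condition. The decisive computation expresses $\mathcal L_XJ=0$ through the curvature $\rho^\nabla$ of the canonical connection on the anti-canonical bundle; inserting $\rho^\nabla=\omega-dd^cf_\xi$ produces, on the $d^c$-potential, the relation $\tfrac12\Delta^gu=u-g(\alpha,d^cf_\xi)$ of \eqref{result1}, where the inhomogeneous term is exactly the drift carried by $dd^cf_\xi$; the same computation applied to $J\alpha$ yields the companion equation \eqref{result2} for $h$. The third identity $D^g_{(dh)^\sharp}\omega=0$ is where the non-integrability of $J$ intervenes: in the K\"ahler case $D^g\omega=0$ renders it vacuous, whereas in general it is precisely the constraint isolating the part of $\mathcal L_XJ$ carried by the torsion of $\nabla$, forcing the gradient piece $\nabla h$ to act trivially on $\omega$. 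I expect this bookkeeping of the torsion terms of $\nabla$---those absent from the integrable theory---and their assembly into the single clean condition $D^g_{(dh)^\sharp}\omega=0$ to be the main obstacle.

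Finally, for the Killing case I would argue that $\mathcal L_Xg=0$ together with $\mathcal L_XJ=0$ gives $\mathcal L_X\omega=0$, so $X$ is symplectic; the absence of a harmonic part in the decomposition shows $X\hook\omega$ is exact, hence $X$ is Hamiltonian. Computing $X\hook\omega=J\alpha=-du+d^ch$, the closedness forced by $\mathcal L_X\omega=0$ yields $dd^ch=0$, and integrating against $e^{-2f_\xi}\omega^n$ makes $h$ constant, leaving $\alpha=d^cu$. Then $g(\alpha,d^cf_\xi)=g(\nabla u,\nabla f_\xi)$, so \eqref{result1} becomes a drift-Laplacian eigenvalue equation; the conformal factor in $\tilde g=e^{-2f_\xi/(n-1)}g$ is chosen exactly so that, in real dimension $2n$, the first-order term $g(\nabla u,\nabla f_\xi)$ is absorbed into $\Delta^{\tilde g}$, producing $\Delta^{\tilde g}u=2u$. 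The last assertion is then immediate: if $2\notin\operatorname{spec}(\Delta^{\tilde g})$ there is no nonzero Killing potential, hence no non-trivial Killing field.
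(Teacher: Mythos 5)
Your overall strategy --- a Hodge decomposition with respect to a twisted Laplacian, a Bochner-type identity fed with the soliton equation $\rho^\nabla=\omega-dd^cf_\xi$, and the conformal change $\tilde g=e^{-2f_\xi/(n-1)}g$ to absorb the drift term in the Killing case --- is the one the paper follows, and your treatment of the Killing case is essentially right. But the main part of your plan has two genuine gaps. First, you cannot obtain $\alpha=d^cu+dh$ ``first'' from Hodge theory. The decomposition of the dual of a holomorphic field with respect to $\Delta^c=J\Delta^gJ^{-1}$ (the paper's actual starting point) reads $\alpha=\alpha_{H^c}+d^cu-J\mathbb{G}d^cv$: the third piece is \emph{coexact}, not $d$-exact. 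Identifying $-J\mathbb{G}d^cv$ with $dh$ for a function $h$ with $v=\Delta^gh$ satisfying \eqref{result2} is precisely the output of the central computation, namely the Weitzenb\"ock identity $\delta^g(D^g\alpha)^{J,+}-\delta^g(D^g\alpha)^{J,-}=\rho^\nabla(X,J\cdot)$ for holomorphic $X$, whose left side becomes $\delta^g(D^g\alpha)^{sym}+\tfrac12 d^c\Delta^gu$ and whose right side is expanded via the soliton equation. Your plan treats the decomposition as a preliminary and so skips the step that actually produces $h$. Relatedly, the claim that ``no harmonic one-forms survive'' on a symplectic Fano manifold is unjustified: the relevant harmonic space is that of $\Delta^c$, whose dimension equals $b_1(M)$, and $b_1=0$ is neither assumed nor known in general for symplectic Fano manifolds of dimension $2n>4$. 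In the paper, $\alpha_{H^c}=0$ is \emph{deduced} from the Bochner equation, because every other term in it is orthogonal to the $\Delta^c$-harmonic space.

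Second, you explicitly defer the identity $D^g_{(dh)^\sharp}\omega=0$, calling it the main obstacle, without giving an argument. In the paper it is a short consequence of two facts: since $X$ is holomorphic, $(D^g\alpha)^{J,-}=-\tfrac12 D^g_{JX}J$ is antisymmetric, so $(d\alpha)^{J,-}=-D^g_{JX}\omega$; and once $\alpha=d^cu+dh$ is established, $d\alpha=dd^cu$, so $(d\alpha)^{J,-}=(dd^cu)^{J,-}=D^g_{(du)^\sharp}\omega$. Comparing the two expressions eliminates the contribution of $dh$. Without this step and without stating the Weitzenb\"ock input, the proposal remains a roadmap that correctly anticipates the shape of the answer but does not yet contain the proof.
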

	We remark that in the K\"ahler case if we have a K\"ahler-Ricci soliton and functions $u,h$ satisfying Equations~(\ref{result1}) and~(\ref{result2}), then the vector field $X$ obtained as the Riemannian dual of $\alpha=d^cu+dh$ is a holomorphic vector field (see~\cite{MR1768112}).
	Therefore, from the above result, it is possible to deduce a structure theorem of the Lie algebra of holomorphic vector fields as in~\cite{MR1768112}. 
	However, in the strictly almost-K\"ahler case, more assumptions are needed in order for $X$ to be a holomorphic vector field (see Remark~\ref{remark1} for details). 
	The proof of Theorem~\ref{killing-algebra} suggests that we can partially extend M Matsushima's theorem~\cite{MR0094478} for K\"ahler--Einstein manifolds to compact first-Chern--Einstein almost-K\"ahler manifolds. 
	\begin{cor*}[Corollary~\ref{AK-Matsushima}]
		Let $(M,\omega,J,g)$ be a compact almost-K\"ahler manifold such that the metric $g$ is a first-Chern--Einstein metric. Then, we have the following
		\begin{enumerate}
			\item If $\rho^\nabla=0$, then any holomorphic vector field is a Killing vector field. Moreover,
			the Lie algebra of holomorphic vector fields is abelian and the Riemannian dual of any holomorphic vector field is harmonic with respect to the twisted Laplacian
			$\Delta^c=J\Delta^gJ^{-1}.$\\
			\item If $\rho^\nabla=-\omega$, then there are no non-trivial holomorphic vector fields on $M.$\\
			\item If $\rho^\nabla=\omega$, then the Riemannian dual $\alpha$ of a holomorphic vector field $X$ is of the form
			$$\alpha=d^cu+dh,$$
			such that 
			\begin{eqnarray*}
				\Delta^gu=2u,\\ 
				\Delta^gh=2h,\\
				D^g_{(dh)^\sharp}\omega=0.
			\end{eqnarray*}
			In particular, there are no non-trivial holomorphic vector fields if $2$ is not an eigenvalue of $\Delta^g.$ Moreover, if $X$ is a Killing vector field, then $X$ is a Hamiltonian vector field.
			% Moreover, any eigenfunction $v$ of $\Delta^g$ satisfying $$\int_Mg(\left(D^gd^cv\right)^{sym},\left(N(Jgrad^gv,\cdot)\right)^{sym}) \omega^2=0$$ has an eigenvalue larger or equal to $2.$ 
		\end{enumerate}
	\end{cor*}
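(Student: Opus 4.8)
The plan is to deduce the three cases from the circle of ideas behind Theorem~\ref{killing-algebra}, treating the positive case as a literal specialization and re-running the underlying Weitzenb\"ock computation for the remaining two.

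First I would dispose of case (iii). If $\rho^\nabla = \omega$ then $s^c = \tr_\omega\rho^\nabla = \tr_\omega\omega = n$ is constant and $2\pi c_1(M) = [\omega]$, so $(M,\omega)$ is symplectic Fano and, taking $G$ trivial and $\xi\equiv 0$ (hence $f_\xi\equiv 0$), the metric is a GeAKRS with $\rho^\nabla - \omega = 0 = -dd^cf_\xi$. Theorem~\ref{killing-algebra} therefore applies; since $d^cf_\xi = 0$ and the conformal factor $e^{-2f_\xi/(n-1)}$ is identically $1$, one has $\tilde g = g$, so equations \eqref{result1}--\eqref{result2} reduce to $\Delta^gu = 2u$ and $\Delta^gh = 2h$, with $D^g_{(dh)^\sharp}\omega = 0$ and the Killing statement carried over unchanged. (For $2n = 2$ every compatible $J$ is integrable, so this is the classical K\"ahler statement.)

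For cases (i) and (ii) the manifold is not Fano and Theorem~\ref{killing-algebra} is unavailable, so I would repeat its proof with $f_\xi\equiv 0$ but with $\rho^\nabla = \lambda\omega$ for general $\lambda$. Writing $\alpha = X^\flat$ for the Riemannian dual of a holomorphic vector field $X$ and decomposing $\alpha = d^cu + dh + \alpha_H$ into $d^c$-exact, $d$-exact and $\Delta^c$-harmonic parts, the holomorphicity condition $\mathcal{L}_XJ = 0$ together with $\rho^\nabla = \lambda\omega$ yields, in place of \eqref{result1}--\eqref{result2}, the eigenvalue equations $\tfrac12\Delta^gu = \lambda u$ and $\tfrac12\Delta^gh = \lambda h$. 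When $\lambda = -1$ (case (ii)), pairing $\Delta^gu = -2u$ with $u$ forces $u = 0$ and likewise $h = 0$; the negativity of the Chern--Einstein constant also rules out a nonzero harmonic part, so $\alpha = 0$ and $X = 0$. When $\lambda = 0$ (case (i)), the same equations force $u$ and $h$ to be constant, the exact parts drop out, and $\alpha = \alpha_H$ is $\Delta^c$-harmonic; the Weitzenb\"ock identity behind the computation, with its curvature term now absent, shows $X$ is parallel, hence Killing, and two parallel fields commute, so the Lie algebra of holomorphic vector fields is abelian.

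The main obstacle is that in the strictly almost-K\"ahler case $\rho^\nabla$ is the Chern--Ricci form rather than the Riemannian Ricci form, so neither the classical Matsushima decomposition nor the Bochner vanishing theorem may be quoted directly. One must instead use the almost-K\"ahler Weitzenb\"ock formula and the $d^c$-adapted Hodge decomposition already set up in the proof of Theorem~\ref{killing-algebra}, keeping track of the torsion contributions and checking throughout that the relevant notion of harmonicity is with respect to the twisted Laplacian $\Delta^c = J\Delta^gJ^{-1}$ rather than $\Delta^g$. In particular, the vanishing of $\alpha_H$ in case (ii) and the parallelism of $X$ in case (i) must be re-established in this twisted setting rather than inherited from their Riemannian counterparts.
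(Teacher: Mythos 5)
Your treatment of case (iii) is exactly the paper's: specialize Theorem~\ref{killing-algebra} to $f_\xi\equiv 0$, so $\tilde g=g$ and the soliton equations become the stated eigenvalue equations. Case (ii) also matches in substance: the paper plugs $\rho^\nabla=-\omega$ into the identity $\tfrac12 dv+\tfrac12 d^c\Delta^g u=\rho^\nabla(X,J\cdot)=-\alpha$ extracted from the proof of Theorem~\ref{killing-algebra}, reads off $\alpha_{H^c}=0$, and obtains $\tfrac12\Delta^g u=-u$ and $-\tfrac12\Delta^g v=v$, which force $u=v=0$ by nonnegativity of $\Delta^g$ on a compact manifold; your sign argument is the same. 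One small caveat: the decomposition $\alpha=d^cu+dh+\alpha_H$ is not available a priori --- the $\Delta^c$-Hodge decomposition of $\alpha$ has a $\delta^c$-coexact piece $-J\mathbb{G}d^cv$, and identifying that piece with an exact form $dh$ is itself a consequence of the equations (it is only carried out in the paper in case (iii) and in Theorem~\ref{killing-algebra}); for $\lambda\le 0$ everything vanishes anyway, so no harm results, but the eigenvalue equation you write for $h$ in those cases is not literally meaningful.

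The genuine gap is in case (i). After correctly concluding that $\alpha=\alpha_{H^c}$ is $\Delta^c$-harmonic, you propose to show $X$ is parallel by a Weitzenb\"ock argument ``with its curvature term now absent.'' The curvature term that vanishes here is $\rho^\nabla(X,J\cdot)$ --- the \emph{Chern}--Ricci curvature --- not the Riemannian Ricci curvature, and in the strictly almost-K\"ahler case $\rho^\nabla=0$ does not imply $\mathrm{Ric}^g=0$ (the two Ricci tensors differ by terms quadratic in $D^gJ$). So the classical Bochner conclusion ``Killing $\Rightarrow$ parallel'' is unavailable; you acknowledge that parallelism ``must be re-established in this twisted setting'' but do not do it, and both your ``Killing'' and your ``abelian'' conclusions rest on it. The paper avoids parallelism entirely: $\Delta^c$-harmonicity of $\alpha$ means $J\alpha$ is $\Delta^g$-harmonic, hence closed, so $\mathcal{L}_X\omega=dJ\alpha=0$; combined with $\mathcal{L}_XJ=0$ this gives $\mathcal{L}_Xg=0$, i.e., $X$ is Killing. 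For abelianness, such an $X$ preserves $g$ and $J$, hence preserves $\Delta^g$-harmonic $1$-forms, so for another holomorphic field $Y$ the form $\mathcal{L}_X(J\alpha_Y)=d\,\iota_X(J\alpha_Y)$ is simultaneously exact and $\Delta^g$-harmonic, hence zero, giving $[X,Y]=0$. You should replace the parallelism step by this softer argument.
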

	We observe that in the K\"ahler setting, if $\rho^\nabla=\omega$ and $u$ is a function such that $\Delta^gu=2u$, then $u$ is a Killing potential. Hence, one can prove that the automorphism group is reductive and show the maximality of the isometry group in the automorphism group. However, this is not a priori true in the strictly almost-K\"ahler case (see Remark~\ref{remark2}). \\

	The paper is organized as follows: Section \ref{sec: prelim} provides a short recap of the required knowledge of almost-K\"ahler manifolds. Moreover, a technical result (Lemma \ref{lichnerowicz-AK}) is stated, while we dedicate an appendix to its proof.
	In Section \ref{sec1}, we define generalized almost-K\"ahler--Ricci solitons and discuss obstructions to their existence, with particular focus on the symplectic Fano case.
	Then, we perform our deformation argument in Section \ref{sec2}, and, in Section \ref{toric-section}, we specialize to the toric case.
	Finally, in Section \ref{sec-lie-algebra} we analyze the properties of the Lie algebra of holomorphic vector fields on manifolds admitting GeAKRS.
	
\section*{Acknowledgements}
	The second author is extremely grateful to Professor Mehdi Lejmi and the Math Department of the CUNY Graduate Center for their warm hospitality while conducting the research for this work. The authors are grateful to Eiji Inoue for his invaluable comments.
	
\section{Preliminaries}\label{sec: prelim}
	In the following section $(M,\omega)$ will be a symplectic manifold of dimension $2n$. 	
	The canonical Hermitian connection (a.k.a. the Chern connection)~\cite{MR66733,MR0165458,MR1456265} $\nabla$ on an almost-K\"ahler manifold is defined by
	\begin{equation*}
		\nabla_XY=D^g_XY-\frac{1}{2}J\left(D^g_XJ\right)Y,
	\end{equation*}
	where $X,Y$ are vector fields and $D^g$ is the Levi-Civita connection with respect to the metric $g$. The connection $\nabla$ is the unique connection preserving the almost-K\"ahler structure (i.e. $\nabla\omega = \nabla J = \nabla g = 0$) and whose torsion is the Nijenhuis tensor $N$. We denote by $R^\nabla$ the curvature of $\nabla$ and we use the convention $R^\nabla_{X,Y}=\nabla_{[X,Y]}-[\nabla_X,\nabla_Y]$. Then, the {\it first-Chern--Ricci form} $\rho^\nabla$ is defined by
	\begin{equation*}
		\rho^\nabla(X,Y):=\frac{1}{2}\sum_{i=1}^{2n}g\left( R^\nabla_{X,Y}e_i,Je_i\right),
	\end{equation*}
	where $\{e_1,e_2=Je_1,\cdots,e_{2n-1},e_{2n}=Je_{2n-1}\}$ is a $J$-adapted $g$-orthonormal local frame of the tangent bundle $TM.$
	The (real) $2$-form $\rho^\nabla$ is a de Rham representative of $2\pi c_1(M)$. However, $\rho^\nabla$ is not necessarily $J$-invariant, i.e., $\rho^{\nabla}$ is not necessarily a $(1, 1)$-form. The Chern scalar curvature $s^c$ is the trace of $\rho^\nabla$ with respect to $\omega$ i.e. 
	%\begin{equation*}
	%s^\nabla=2\,g(\rho^\nabla,\omega).
	%\end{equation*}
	%Equivalently,
	\begin{equation*}
		s^c\omega^n=n\,\,\rho^\nabla\wedge\omega^{n-1}.
	\end{equation*}
	Furthermore, we denote by $\Lambda_\omega$ the contraction by the symplectic form $\omega$ and by $$\delta^c:=J\delta^gJ^{-1}$$
	the twisted codifferential acting on $p$-forms, where $\delta^g$ is the codifferential with respect to the metric $g$. 
	The commutator of $\Lambda_\omega$ and the exterior derivative $d$ satisfies the K\"ahler identity (see for example~\cite{MR1637093})
	\begin{equation}\label{commutator}
		[\Lambda_\omega,d]=-\delta^c.
	\end{equation}
	We also define the twisted Laplace operator $\Delta^{c}$ acting on $p$-forms as $$\Delta^{c}:=\delta^cd^c+d^c\delta^c=J\Delta^gJ^{-1},$$ 
	where $d^c:=JdJ^{-1}$ is the twisted differential acting on $p$-forms and $\Delta^g=\delta^gd+d\delta^g$ is the Riemannian Laplacian with respect to the almost-K\"ahler metric $g$.
	The Hodge decomposition of a $p$-form $\psi$ with respect to $\Delta^{c}$ is given by: 
	\begin{equation}\label{hodge-twisted-decomposition}
		\psi=\psi_{H^c}+d^c\phi+\delta^c\tau,
	\end{equation}
	where $\psi_{H^c}$ is the $\Delta^c$-harmonic part, $\phi$ is a $(p-1)$-form and $\tau$ is a $(p+1)$-form. We call $d^c\phi$ the $d^c$-exact part of the
	Hodge decomposition of $\psi$ with respect to $\Delta^{c}$. 
	We remark that for an almost-K\"ahler metric $g$,  $\Delta^g-\Delta^c=[\Lambda_\omega,\left(dd^c+d^cd\right)]$~\cite{Gauduchon-book}, while $\Delta^g=\Delta^c$ when the metric is K\"ahler.
	Moreover, we have the following useful lemma on almost-K\"ahler manifolds (the proof is in the Appendix, see also a related expression for the Lichnerowicz operator in~\cite{MR4620280})
	
	\begin{lem}\label{lichnerowicz-AK}
		For any function $f$ on an almost-K\"ahler manifold $(M,\omega,J,g)$, we have
		\begin{equation*}
			\delta^g\left(\left(D^gd^cf\right)^{sym}_{J\cdot,\cdot}\right)=-\frac{1}{2}d(\Delta^g)f-\rho^\nabla(\left(d^cf\right)^{\sharp},\cdot),
		\end{equation*}
		where $\delta^g$ is the adjoint of the Levi-Civita connection $D^g$ with respect to $g,$ $\left(\cdot\right)^{sym}$ denotes the $g$-symmetric part, and
		$\sharp$ is the $g$-Riemannian dual.
	\end{lem}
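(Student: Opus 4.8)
The plan is to run a Weitzenb\"ock-type computation in a local $J$-adapted $g$-orthonormal frame $\{e_i\}$ that is $D^g$-normal at the point where the identity is tested, and to postpone the appearance of $\rho^\nabla$ to the very last step. Write $\beta:=d^cf$ and $S:=(D^g\beta)^{sym}$. Since $\delta^g$ contracts the first slot of a tensor, I would first expand
\[(\delta^g S_{J\cdot,\cdot})(Y)=-\sum_i(D^g_{e_i}S)(Je_i,Y)-\sum_i S\big((D^g_{e_i}J)e_i,Y\big).\]
The second sum vanishes on an almost-K\"ahler manifold: $\sum_i(D^g_{e_i}J)e_i=-(\delta^g\omega)^\sharp$ and $\delta^g\omega=0$ because $*\omega$ is proportional to the closed form $\omega^{n-1}$. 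This removes the most conspicuous non-K\"ahler contribution and leaves
\[(\delta^g S_{J\cdot,\cdot})(Y)=-\tfrac12\sum_i\big[(D^2\beta)(e_i,Je_i,Y)+(D^2\beta)(e_i,Y,Je_i)\big],\]
where $D^2\beta$ denotes the second Levi-Civita covariant derivative of the $1$-form $\beta$.

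Next I would isolate the scalar part. The first term $A:=\sum_i(D^2\beta)(e_i,Je_i,Y)$ is antisymmetric in its first two arguments (reindexing $e_i\mapsto Je_i$ gives $\sum_i(D^2\beta)(e_i,Je_i,Y)=-\sum_i(D^2\beta)(Je_i,e_i,Y)$), so the Ricci identity converts it immediately into a Levi-Civita curvature contraction $\sum_i\beta(R^{D^g}_{e_i,Je_i}Y)$. In the second term $B:=\sum_i(D^2\beta)(e_i,Y,Je_i)$ I would commute the first two slots by the Ricci identity, producing $\sum_i(D^2\beta)(Y,e_i,Je_i)$ together with a further curvature contraction $\sum_i\beta(R^{D^g}_{e_i,Y}Je_i)$. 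The piece $\sum_i(D^2\beta)(Y,e_i,Je_i)$ is where $\Delta^g f$ enters: evaluated in the normal frame it equals $Y(\Phi)-\sum_i(D^g\beta)(e_i,(D^g_YJ)e_i)$ with $\Phi:=\sum_i(D^g\beta)(e_i,Je_i)$. Because $\Phi$ detects only the skew part of $D^g\beta$, it is a multiple of $\Lambda_\omega d\beta=\Lambda_\omega dd^cf$, and since $\Lambda_\omega d^cf=0$ the K\"ahler identity \eqref{commutator} gives $\Lambda_\omega dd^cf=-\delta^c d^cf=-\Delta^g f$. Thus $Y(\Phi)$ supplies the term $-\tfrac12 d\Delta^g f$ of the statement, once the normalization linking $\sum_i\phi(e_i,Je_i)$ to $\Lambda_\omega\phi$ and the sign conventions for $\delta^g$ and $d^c$ are fixed.

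The remaining and decisive step is to show that the two Levi-Civita curvature contractions, together with the surviving term $\sum_i(D^g\beta)(e_i,(D^g_YJ)e_i)$, assemble into $-\rho^\nabla((d^cf)^\sharp,Y)$. I would obtain this by computing $\rho^\nabla$ directly from the Levi-Civita curvature: writing the canonical connection as $\nabla_X=D^g_X+\eta_X$ with $\eta_X=-\tfrac12 J(D^g_XJ)$, the curvature $R^\nabla$ equals $R^{D^g}$ plus terms built from $D^g\eta$ and $\eta\circ\eta$, and tracing against $J$ as in $\rho^\nabla(X,Y)=\tfrac12\sum_i g(R^\nabla_{X,Y}e_i,Je_i)$ should reproduce exactly the curvature-plus-$D^gJ$ combination found above. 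Here one uses the pointwise identities $(D^g_XJ)J+J(D^g_XJ)=0$ and $\delta^g\omega=0$, the first Bianchi identity to pair $\sum_i\beta(R^{D^g}_{e_i,Y}Je_i)$ with $\sum_i\beta(R^{D^g}_{e_i,Je_i}Y)$, and the paper's sign convention $R^\nabla_{X,Y}=\nabla_{[X,Y]}-[\nabla_X,\nabla_Y]$. I expect this matching to be the main obstacle: the quadratic $(D^gJ)^2$ and second-derivative $D^g(D^gJ)$ contributions concealed in $R^\nabla$ must cancel against those generated by the commutations, so that only the single explicit $D^gJ$ term and the curvature trace defining $\rho^\nabla$ survive. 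Carrying out this cancellation carefully—rather than any one of the individual contractions—is where the real work lies, and it may be cleaner to perform the whole computation in terms of $\nabla$ (for which $J$ is parallel) and translate back to $D^g$ only at the end.
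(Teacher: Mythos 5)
Your setup is sound: the expansion of $\delta^g(S_{J\cdot,\cdot})$, the vanishing of $\sum_i(D^g_{e_i}J)e_i$ via $\delta^g\omega=0$, and the extraction of the term $-\tfrac12 d\Delta^g f$ from $Y(\Phi)$ using $\Lambda_\omega dd^cf=-\delta^cd^cf=-\Delta^gf$ are all correct in outline (modulo the factor-of-two normalizations you flag, e.g.\ $\sum_i T(e_i,Je_i)$ picks out \emph{twice} $\Lambda_\omega$ of the skew part). But the proposal stops exactly where the lemma lives. You state that the two Levi-Civita curvature contractions $\sum_i\beta(R^{D^g}_{e_i,Je_i}Y)$ and $\sum_i\beta(R^{D^g}_{e_i,Y}Je_i)$, together with the surviving term $\sum_i(D^g\beta)(e_i,(D^g_YJ)e_i)$, ``should'' assemble into $-\rho^\nabla((d^cf)^\sharp,Y)$, and you yourself identify this as the main obstacle and leave it unexecuted. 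This is a genuine gap, not a bookkeeping detail: the trace $\tfrac12\sum_i g(R^{D^g}_{X,Y}e_i,Je_i)$ is the $\ast$-Ricci form $\rho^\ast$, not $\rho^\nabla$, and the two differ by the quadratic term $\tfrac14\tr\left(JD^g_XJ\circ D^g_YJ\right)$; moreover $\sum_i\beta(R^{D^g}_{e_i,Y}Je_i)$ is a priori a different contraction from $\rho^\ast(\beta^\sharp,\cdot)$ and must be related to it by the first Bianchi identity, which generates further $D^gJ$ terms through $d\omega=0$. Whether all of these cancel against the leftover $(D^gJ)$-terms from your commutations is precisely the content of the lemma, and nothing in the proposal verifies it.

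For comparison, the paper does not run the Weitzenb\"ock computation from scratch. It first proves the pointwise identity $(D^gd^cf)^{sym}=(D^gd^cf)^{J,-}-\tfrac12 D^g_{(df)^\sharp}\omega$, then quotes an existing Bochner-type formula for $2\delta^g(D^gd^cf)^{J,-}$ in which $\rho^\ast$ already appears, uses the almost-K\"ahler identity $dd^cf+d^cdf=2D^g_{(df)^\sharp}\omega$ to produce $d\Delta^gf$, and finally converts $\rho^\ast$ into $\rho^\nabla$ via $\rho^\nabla=\rho^\ast-\tfrac14\tr\left(JD^g_{\cdot}J\circ D^g_{\cdot}J\right)$, checking that the residual quadratic $(D^gJ)^2$ terms cancel. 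Your route is more self-contained in principle, but to make it a proof you must carry out the $R^\nabla=R^{D^g}+D^g\eta+\eta\wedge\eta$ trace explicitly and exhibit the cancellation of both the $D^g(D^gJ)$ and the $(D^gJ)^2$ contributions; as written, the argument establishes only the $-\tfrac12 d\Delta^gf$ half of the identity.
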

	%\begin{proof}
	%
	%\end{proof}
	
	As a consequence of Lemma~\ref{lichnerowicz-AK}, on an almost-K\"ahler manifold $(M,\omega,J,g)$, if the symplectic gradient $grad_\omega f=\left(d^cf\right)^\sharp$ is a Killing vector field, then~\cite{MR2747965}
	\begin{equation}\label{contraction-with-killing}
		-\frac{1}{2}d\Delta^gf=\rho^\nabla(grad_\omega f, \cdot).
	\end{equation}

\section{Generalized almost-K\"ahler Ricci solitons}\label{sec1}

	In the following, $(M,\omega)$ will be a symplectic manifold, and we denote by $Ham(M,\omega)$ the group of Hamiltonian symplectomorphisms of $(M,\omega)$.
	We fix a compact Lie subgroup $G$ (with Lie algebra $\mathfrak{g}$) of the group $Ham(M,\omega)$ and we denote by $AK^G_\omega$ the Fr\'echet manifold of $G$-invariant almost-complex structures compatible with $\omega$ on $M$.
	We define by $Ham^G(M,\omega)$ the normalizer of $G$ in $Ham(M,\omega)$, i.e., the space of Hamiltonian symplectomorphisms commuting with $G$. 
	Its Lie algebra $\mathfrak{ham}^G$ is the ideal generated by $\mathfrak{g}$ in $\mathfrak{ham}$.
	By taking the Hamiltonian potential, $\mathfrak{ham}^G$ can be identified with $C^\infty_G(M)/\mathbb{R}$, namely, the space of smooth $G$-invariant functions on $M$ up to additive constant.\\
	It is known that the space $AK_\omega$ comes equipped with a formal K\"ahler structure~\cite{MR1073369}, and the observations of Fujiki in the integrable case~\cite{MR1073369}, and of Donaldson in the general non-integrable case~\cite{MR1622931}, show that the natural action of $Ham(M,\omega)$ on $AK_\omega$ is Hamiltonian with moment map given by the Chern scalar curvature. 
	More precisely, the moment map $\mu:AK_\omega\rightarrow\mathfrak{ham}^*$ is
	$$ \mu(J)(Z) = -\int_M f s^c(J) \omega^n, $$
	where $Z=grad_\omega f$ and $f$ is normalised such that it has integral zero. %for any Hamiltonian vector field in $\mathfrak{ham}$, we denote by $\hat{Z}$ the infinitesimal element acting on $AK_\omega$ by $\hat{Z}_J = -\mathcal{L}_ZJ$. 
	It turns out that it is also possible to give a more general formulation by fixing a connected compact subgroup of $Ham(M,\omega)$, which in our case is the compact group $G$.
	We present this following the formulation introduced by E. Inoue~\cite{MR4017922}.
	Start by fixing an element $\xi$ in the center of $\mathfrak{g}$. Note that $\xi$ defines a vector field on $M$ which we will also denote by $\xi$. Let $f_\xi$ be the potential defined by $$\omega(\xi, \cdot)=-df_\xi$$ and normalized such that $$\int_M f_\xi\,e^{-2f_\xi}\,\omega^n=0.$$
	We identify the Lie algebra $\mathfrak{ham}^G$ with the space of normalized functions in $C^\infty_G(M)$ 
	$$C^\infty_{G,\xi}(M,\omega)=\left\{u\in C^\infty_G(M)\,\middle|\, \int_Mu\,e^{-2f_\xi}\,\omega^n=0\right\} \simeq C^\infty_G(M)/\mathbb{R},$$
	which is equipped with the following inner product
	\begin{equation}\label{inner-product}
		\langle u,v\rangle_\xi:=  \int_Mu\,v\,e^{-2f_\xi}\,\omega^n.
	\end{equation}
	We use the potential $f_\xi$ to also twist the natural K\"ahler structure on $AK_\omega^G$.
	More precisely, we consider the following Riemannian metric on $AK_\omega^G$, modified by $\xi$ from the usual one:
	$$ (A,B)_\xi := \int_M \tr\left(AB\right) e^{-2f_\xi}\omega^n.$$
	The space $AK_{\omega}^G$ admits an almost-complex structure $\bm{J}$ which is compatible with this Riemannian metric. Extending the results of~\cite{MR1073369,MR1622931} (see also~\cite{MR3941493,MR3897025}), Inoue proved that that the natural action of $Ham^G(M,\omega)$ on $\left(AK^G_\omega, (\bm{J}\cdot,\cdot)_\xi\right)$ is Hamiltonian with moment map given by a {\em modified Chern scalar curvature}~\cite{MR2831983,MR4017922}.
	In details, the moment map is 
	\begin{eqnarray*}
		\mu_\xi:AK^G_\omega&\longrightarrow& (C^\infty_{G,\xi}(M,\omega))^{\ast},\\
		J&\mapsto& \langle 4s^c_\xi(J),\cdot\rangle_\xi,
	\end{eqnarray*}
	where for a given $J\in AK^G_\omega$, $s^c$ is the Chern scalar curvature of $(\omega,J)$, $\Delta^g$ is the Riemannian Laplacian of the metric induced by $(\omega,J)$, and
	$$s^c_\xi(J):=s^c(J)-n-2\Delta^gf_\xi+2f_\xi-2|df_\xi|_g^2$$  (note that in~\cite{MR4017922} $G$ is a torus but the assumption that $\xi$ is in the center of $\mathfrak{g}$ is enough to ensure that $\mu_{\xi}$ is a moment map).

	\begin{defn}
		We say an almost-K\"ahler metric $g$ is a {\it{generalized almost-K\"ahler--Ricci soliton}} (GeAKRS for short), if, for some $\xi$, the almost-complex structure inducing $g$ is a zero of the moment map $\mu_\xi$. That is, $g$ is an almost-K\"ahler metric satisfying $$s^c_\xi\equiv0.$$ 
	\end{defn}

	If $\xi$ is a trivial vector field then a GeAKRS is an almost-K\"ahler metric of constant Chern scalar curvature, while in the (compact) K\"ahler Fano case, a GeAKRS corresponds to a K\"ahler--Ricci soliton~\cite{MR4017922}. 
	%K\"ahler--Ricci solitons are obstructions to the existence of K\"ahler--Einstein metric (we refer the reader for instance to~\cite{MR2483362,MR1768112,MR1817785}). 
	We also remark that in the K\"ahler setting, D. Guan~\cite{MR1327154, MR2366370} introduced generalized Quasi-Einstein metrics. 
	They correspond to K\"ahler--Ricci solitons in the Fano case, and are obstructions to the existence of constant scalar curvature K\"ahler metrics (see also~\cite{MR2805600,MR3078263}). However, GeAKRS do not a priori coincide with Quasi-Einstein metrics in the non-Fano K\"ahler case.

	As a natural consequence of the moment map setup we obtain a symplectic Futaki invariant.
	In particular, once we identify $\mathfrak{g}$ with a subset of $C^\infty_{G,\xi}(M,\omega)\simeq\mathfrak{ham}^G$, then for any $u\in \mathfrak{g}$ the integral 
	\begin{equation}\label{futaki integral}
		\int_Ms^c_\xi(J) \,u\,e^{-2f_\xi}\omega^n,
	\end{equation}
	is independent of $J$ on any connected component of $AK_\omega^G$~\cite{MR4017922}. %~\cite[Proposition 9.7.1]{gau_book}.
	Hence, we can associate to any connected component of $AK_\omega^G$ a linear map $\mathcal{F}^{\xi}_G:\mathfrak{g}\rightarrow\mathbb{R}$ defined as
	\begin{equation*}
		\mathcal{F}^{\xi}_G(u)=\int_Ms^c_\xi(J) \,u\,e^{-2f_\xi}\omega^n.
	\end{equation*}
	Of course, a necessary condition that the chosen connected component of $AK^G_\omega$ contains a GeAKRS is that $\mathcal{F}^{\xi}_G$ is identically zero on $\mathfrak{g}.$
	
	The fact that the integral in \eqref{futaki integral} is independent of $J$ can be rephrased by saying that changing the almost-complex structure, $s^c_\xi$ varies orthogonally to $\mathfrak{g}$ with respect to the inner product in \eqref{inner-product}.
	Therefore, if we denote by $\Pi_{\omega}$ the orthogonal projection on $\mathfrak{g}$ with respect to the inner product~\eqref{inner-product}, then $\Pi_{\omega}s^c_\xi(J)$ is independent of $J$ on any connected component of  $AK^G_\omega$.
	We state this in the following
	\begin{prop}\label{prop: const proj}
		The map $AK_\omega^G \rightarrow C^\infty(M)$ associating to any $G$-invariant complex structure $J$ the function $\Pi_{\omega}s^c_\xi(J)$ is constant on any connected component of $AK^G_\omega$.
	\end{prop}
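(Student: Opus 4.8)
The plan is to read the proposition off directly from the $J$-independence of the Futaki integral \eqref{futaki integral} that has just been recorded, treating the statement as a linear-algebra reformulation. First I would observe that every piece of data entering the definition of $\Pi_\omega$ is independent of the almost-complex structure $J$. Indeed, the potential $f_\xi$ is determined by $\omega(\xi,\cdot)=-df_\xi$ together with the normalization $\int_M f_\xi\,e^{-2f_\xi}\,\omega^n=0$, both of which involve only $\omega$ and $\xi$; consequently the weight $e^{-2f_\xi}$, the volume form $\omega^n$, and hence the inner product $\langle\cdot,\cdot\rangle_\xi$ of \eqref{inner-product} on $C^\infty_{G,\xi}(M,\omega)$ do not depend on $J$. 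Since $\mathfrak{g}$ is a fixed finite-dimensional subspace and the restriction of $\langle\cdot,\cdot\rangle_\xi$ to it is positive definite, the orthogonal projection $\Pi_\omega$ onto $\mathfrak{g}$ is a well-defined, $J$-independent operator.

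Next I would fix, once and for all, a $\langle\cdot,\cdot\rangle_\xi$-orthonormal basis $u_1,\dots,u_k$ of $\mathfrak{g}$ and expand the projection as
\begin{equation*}
	\Pi_\omega s^c_\xi(J)=\sum_{i=1}^k \langle s^c_\xi(J),u_i\rangle_\xi\, u_i=\sum_{i=1}^k\left(\int_M s^c_\xi(J)\,u_i\,e^{-2f_\xi}\,\omega^n\right)u_i=\sum_{i=1}^k \mathcal{F}^\xi_G(u_i)\,u_i.
\end{equation*}
Each coefficient is precisely the value of the symplectic Futaki invariant on the basis vector $u_i$, and by the invariance statement accompanying \eqref{futaki integral} (Inoue~\cite{MR4017922}) each $\mathcal{F}^\xi_G(u_i)$ is independent of $J$ on a given connected component of $AK^G_\omega$. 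Since the basis $\{u_1,\dots,u_k\}$ was chosen independently of $J$, the entire sum is constant along each connected component, which is the asserted conclusion.

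There is essentially no analytic obstacle here: the proposition is a restatement of the already-established deformation invariance of the Futaki integral. The only point that requires a word of care is that the (possibly nonzero) constant part of $s^c_\xi(J)$ must not interfere, but this is automatic, since any constant function is $\langle\cdot,\cdot\rangle_\xi$-orthogonal to every normalized element of $\mathfrak{g}$; thus $\Pi_\omega$ annihilates constants and sees only the normalized part of $s^c_\xi(J)$, which is exactly what the pairings $\langle s^c_\xi(J),u_i\rangle_\xi$ record. The restriction to connected components is likewise forced, because the invariance of \eqref{futaki integral} is itself only a statement along paths lying within a single connected component of $AK^G_\omega$.
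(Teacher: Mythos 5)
Your proposal is correct and follows essentially the same route as the paper, which derives the proposition directly from the $J$-independence of the Futaki integral \eqref{futaki integral} by observing that $s^c_\xi$ therefore varies orthogonally to $\mathfrak{g}$ with respect to the inner product \eqref{inner-product}. Your additional remarks (the $J$-independence of the weight $e^{-2f_\xi}$ and of $\Pi_\omega$ itself, and the orthonormal-basis expansion) merely make explicit what the paper leaves implicit.
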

	We can thus define the {\em extremal vector field} relative to $G$,  denoted $Z^G_\omega\in\mathfrak{g}$, to be the symplectic gradient of $\Pi_{\omega}s^c_\xi$.
	\begin{rem}
		It holds that $Z^G_{\tilde{\omega}}=Z^G_{\omega}$, for any $G$-invariant symplectic form $\tilde{\omega}$ isotopic to $\omega.$
	\end{rem}

	A compact symplectic manifold $(M,\omega)$ is called \textit{monotone} if $2\pi c_1(M)=\lambda [\omega]$, for some constant $\lambda$.
	We prove that on a compact symplectic monotone manifold $(M,\omega)$, if $J\in AK^G_\omega$ induces a GeAKRS, then $\lambda$ has to be positive unless $\xi$ is trivial.
	\begin{prop}\label{positivity}
		Let $(M,\omega)$ be a compact symplectic monotone manifold. Suppose that $J\in AK^G_\omega$ induces a GeAKRS for some non-trivial vector field $\xi$. Then $\lambda$ is positive.
	\end{prop}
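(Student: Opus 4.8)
The plan is to combine the scalar equation $s^c_\xi\equiv0$ with the monotonicity $[\rho^\nabla]=\lambda[\omega]$ at the level of differential forms, distilling both into a single real constant whose sign forces $\lambda>0$ via a maximum principle.

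First I would record that $\xi$ is a Hamiltonian Killing field. Indeed $\xi$ lies in the centre of $\mathfrak{g}$, so it generates a one-parameter subgroup of $G\subset Ham(M,\omega)$ and preserves $\omega$; since $J\in AK^G_\omega$ is $G$-invariant it also preserves $J$, hence $g=\omega(\cdot,J\cdot)$. Thus $\xi=grad_\omega f_\xi$ and identity~\eqref{contraction-with-killing} gives $\rho^\nabla(\xi,\cdot)=-\tfrac12 d\Delta^g f_\xi$. Monotonicity means $\rho^\nabla-\lambda\omega$ is closed and exact, so $\rho^\nabla-\lambda\omega=d\beta$; averaging over the compact group $G$ I may take $\beta$ to be $G$-invariant, so that $\mathcal{L}_\xi\beta=0$. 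Contracting with $\xi$ and using $\iota_\xi\omega=-df_\xi$ together with the Killing identity gives $\iota_\xi d\beta=-\tfrac12 d\Delta^g f_\xi+\lambda df_\xi$, whence $0=\mathcal{L}_\xi\beta=d(\iota_\xi\beta)+\iota_\xi d\beta$ integrates to
\[
\iota_\xi\beta=\tfrac12\Delta^g f_\xi-\lambda f_\xi+c,\qquad c\in\mathbb{R}.
\]

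The technical heart is to compute $\int_M s^c e^{-2f_\xi}\omega^n$ in two ways, writing $W:=\int_M e^{-2f_\xi}\omega^n$ and using $\int_M\Delta^g f_\xi\,e^{-2f_\xi}\omega^n=-2\int_M|df_\xi|^2e^{-2f_\xi}\omega^n$ together with the normalization $\int_M f_\xi\,e^{-2f_\xi}\omega^n=0$. Integrating $s^c_\xi\equiv0$ against $e^{-2f_\xi}\omega^n$ yields $\int_M s^c e^{-2f_\xi}\omega^n=nW-2\int_M|df_\xi|^2e^{-2f_\xi}\omega^n$. On the other hand, from $s^c\omega^n=n\,\rho^\nabla\wedge\omega^{n-1}$ and $\rho^\nabla=\lambda\omega+d\beta$, an integration by parts (using $d\omega^{n-1}=0$) converts $n\int_M e^{-2f_\xi}d\beta\wedge\omega^{n-1}$ into $2\int_M e^{-2f_\xi}(\iota_\xi\beta)\omega^n$ through the pointwise identity $(\iota_\xi\beta)\,\omega^n=n\,df_\xi\wedge\beta\wedge\omega^{n-1}$ (itself a consequence of $\iota_\xi\omega=-df_\xi$ and $\iota_\xi(\beta\wedge\omega^n)=0$). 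Substituting the displayed formula for $\iota_\xi\beta$, the two integral identities make the $|df_\xi|^2$-terms cancel and give $\int_M s^c e^{-2f_\xi}\omega^n=n\lambda W-2\int_M|df_\xi|^2e^{-2f_\xi}\omega^n+2cW$. Equating the two expressions collapses to $n=n\lambda+2c$, that is $c=\tfrac{n(1-\lambda)}{2}$.

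Finally I would invoke the maximum principle. Since $\xi\neq0$, $f_\xi$ is non-constant, and because $e^{-2f_\xi}>0$ while $\int_M f_\xi e^{-2f_\xi}\omega^n=0$, the potential $f_\xi$ changes sign; in particular $\max_M f_\xi>0$. At a maximiser $p_+$ we have $df_\xi(p_+)=0$, hence $\xi(p_+)=0$ and $\iota_\xi\beta(p_+)=0$, while $\Delta^g f_\xi(p_+)\ge0$. Evaluating the displayed identity for $\iota_\xi\beta$ at $p_+$ gives $\lambda\max_M f_\xi=\tfrac12\Delta^g f_\xi(p_+)+c\ge c$. If $\lambda\le0$ then $c=\tfrac{n(1-\lambda)}{2}\ge\tfrac n2>0$, whereas $\lambda\max_M f_\xi\le0$ because $\max_M f_\xi>0$; this contradicts $\lambda\max_M f_\xi\ge c>0$. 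Hence $\lambda>0$, and the same inequality handles the borderline case $\lambda=0$ (i.e. $c_1=0$) uniformly. I expect the second evaluation of $\int_M s^c e^{-2f_\xi}\omega^n$ to be the main obstacle: the weight $e^{-2f_\xi}$ prevents replacing $\rho^\nabla$ by its cohomology class, so one must genuinely work with the $G$-invariant primitive $\beta$, integrate by parts against the weighted volume, and keep the signs and the constant $c$ consistent across the two computations.
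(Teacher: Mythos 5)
Your proposal is correct and follows essentially the same route as the paper: contract $\rho^\nabla-\lambda\omega=d\beta$ with $\xi$ via identity~(\ref{contraction-with-killing}) to get the pointwise relation $\iota_\xi\beta=\tfrac12\Delta^g f_\xi-\lambda f_\xi+c$, pin down $c=\tfrac{n(1-\lambda)}{2}$ by weighted integration against $e^{-2f_\xi}\omega^n$ combined with $s^c_\xi\equiv0$, and conclude by evaluating at a maximum of $f_\xi$. The only (cosmetic) difference is that you compute the weighted integral of $s^c$ by Stokes on $e^{-2f_\xi}d\beta\wedge\omega^{n-1}$, whereas the paper routes the same computation through $s^c-\lambda n=\delta^gJa$ and the K\"ahler identity~(\ref{commutator}).
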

	\begin{proof}
		From the monotone hypothesis, we have  
		\begin{equation}\label{Fano}
			\rho^\nabla-\lambda\omega=da,
		\end{equation}
		for some $G$-invariant $1$-form $a$. Now, contracting Equation~(\ref{Fano}) with $\xi$ and
		using the identity~(\ref{contraction-with-killing}) and Cartan formula we get:
		$$-\frac{1}{2}d\Delta^g f_\xi+\lambda df_\xi=-d\left(a(\xi)\right).$$
		Hence, for some constant $c$,
		\begin{equation}\label{contraction}
			-\frac{1}{2}\Delta^g f_\xi+\lambda f_\xi=-a(\xi)+c.
		\end{equation}
		Using the induced metric on one-forms, one can rewrite the term $a(\xi)$ as follows:
		\begin{equation}\label{a(xi)}
		a(\xi) = -g(Ja, df_{\xi}).
		\end{equation}
		So multiplying Equation (\ref{contraction}) by $e^{-2f_\xi}$ and integrating over $M$, we get
		\begin{eqnarray}\label{integral}
			-\frac{1}{2}\int_M\Delta^g f_\xi\,e^{-2f_\xi}\omega^n&=&\int_Mg(Ja,df_\xi)\,e^{-2f_\xi}\omega^n+c\int_Me^{-2f_\xi}\omega^n,\nonumber\\
		 &=&-\frac{1}{2}\int_Mg\left(Ja,d(e^{-2f_\xi})\right)\omega^n+c\int_Me^{-2f_\xi}\omega^n,\nonumber\\
		 &=&-\frac{1}{2}\int_M\left(\delta^gJa\right)\,e^{-2f_\xi}\omega^n+c\int_Me^{-2f_\xi}\omega^n.
		\end{eqnarray}
		On the other hand, by contracting~(\ref{Fano}) with $\omega$ and using identity~(\ref{commutator}), we have
		\begin{equation}\label{scalar}
		s^c-\lambda n=\delta^g Ja.
		\end{equation}
		Therefore, Equation~(\ref{integral}) becomes
		\begin{equation}\label{integral_1}
			-\frac{1}{2}\int_M\Delta^g f_\xi\,e^{-2f_\xi}\omega^n=-\frac{1}{2}\int_M\left(s^c-\lambda n\right)\,e^{-2f_\xi}\omega^n+c\int_Me^{-2f_\xi}\omega^n.
		\end{equation}
		Moreover, by hypothesis, $s^c_\xi=s^c-n-2\Delta^gf_\xi+2f_\xi-2|df_\xi|_g^2=0$ so multiplying by $e^{-2f_\xi}$ and integrating we get
		$$\int_M\Delta^g f_\xi\,e^{-2f_\xi}\omega^n=\int_M\left(s^c- n\right)\,e^{-2f_\xi}\omega^n.$$
		Hence, Equation~(\ref{integral_1}) is equivalent to
		$$ -\frac{1}{2}\int_M\left(s^c- n\right)\,e^{-2f_\xi}\omega^n=-\frac{1}{2}\int_M\left(s^c-\lambda n\right)\,e^{-2f_\xi}\omega^n+c\int_Me^{-2f_\xi}\omega^n, $$
		that is
		$$ 0=\left(\frac{n(\lambda-1)}{2}+c \right)\int_Me^{-2f_\xi}\omega^n. $$

		Consequently, the constant $c=-\frac{n(\lambda-1)}{2}.$ 
		Then, from Equations~(\ref{contraction}) and~(\ref{a(xi)}), we deduce that at a maximum point $p$ of $f_\xi$,
		$$-\lambda f_\xi (p)\leq \frac{n(\lambda-1)}{2},$$ 
		but the normalization $\int_M f_\xi\,e^{-2f_\xi}\,\omega^n=0$ implies that if $\lambda\leq 0$ the constant $c=0$ and $f_\xi\equiv 0.$
	\end{proof}

	We recall that the class of ${\rho}^\nabla$ in de Rham cohomology does not depend on $J\in AK^G_\omega$, see e.g. (9.5.14) of~\cite{Gauduchon-book}.
	Moreover, if two symplectic forms $\omega$ and $\tilde{\omega}$ are compatible with $J$ and satisfy $\tilde{\omega}^n=e^F\omega^n,$ for some real-valued function $F$ then  (see for instance~\cite{MR4184828}) $$\tilde{\rho}^\nabla={\rho}^\nabla-\frac{1}{2}dd^cF,$$
	where $\tilde{\rho}^\nabla$ is the first-Chern--Ricci form of $(\tilde{\omega},J)$. 
	Hence, for symplectic monotone manifolds, we can reduce to the case where $\lambda$ is $1,0$, or $-1$. 
	With this assumption, we call a compact symplectic manifold $(M,\omega)$
	satisfying $2\pi c_1(M)= [\omega]$ a {\it{symplectic Fano manifold}}. 
	It then holds that for any $J\in AK^G_\omega$, we have $\rho^\nabla-\omega=da,$ for some $G$-invariant $1$-form $a$.
	Now we can characterize GeAKRS on compact symplectic Fano manifolds. 

	\begin{prop}\label{GeAKRS-exact-dc}
		Let $(M,\omega)$ be a compact symplectic Fano manifold of real dimension $2n > 2$. Then the almost-K\"ahler metric $g$ induced by $(\omega,J)$ is a GeAKRS
		if and only if the $d^c$-exact part of the Hodge decomposition of the $1$-form $a$ with the respect to the twisted Laplacian $\tilde{\Delta}^c=J\Delta^{\tilde{g}}J^{-1}$ of the conformal metric $\tilde{g}=e^{-\frac{2f_\xi}{n-1}}g$ is $-d^c f_\xi$.
	\end{prop}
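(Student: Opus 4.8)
The plan is to turn the condition on the $d^c$-exact part of $a$ into a single scalar equation and identify that equation with the GeAKRS condition $s^c_\xi=0$. In the twisted Hodge decomposition $a=a_{H^c}+d^c\phi+\tilde\delta^c\tau$ with respect to $\tilde\Delta^c$ (here $\phi$ is a function and $\tau$ a $2$-form), I would apply $\tilde\delta^c$ and use $\tilde\delta^c a_{H^c}=0$, $(\tilde\delta^c)^2=0$ and $\tilde\delta^c d^c\phi=\tilde\Delta^c\phi$ on functions to get $\tilde\delta^c a=\tilde\Delta^c\phi$. Since the only $\tilde\Delta^c$-harmonic functions are the constants, the $d^c$-exact part $d^c\phi$ equals $-d^cf_\xi$ exactly when $\tilde\Delta^c\phi=-\tilde\Delta^c f_\xi$, i.e. when $\tilde\delta^c a=-\tilde\Delta^c f_\xi$. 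Thus the Proposition reduces to the scalar equivalence $\tilde\delta^c a=-\tilde\Delta^c f_\xi\Longleftrightarrow s^c_\xi=0$.

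Next I would replace the twisted operators of $\tilde g$ by Riemannian ones: on functions $\tilde\Delta^c=J\tilde\Delta^g J^{-1}=\tilde\Delta^g$, and on the $1$-form $a$ one has $\tilde\delta^c a=J\tilde\delta^g J^{-1}a=-\tilde\delta^g(Ja)$ (using $J^{-1}=-J$ on $1$-forms), so the condition becomes $\tilde\delta^g(Ja)=\tilde\Delta^g f_\xi$, an identity for the conformal metric $\tilde g=e^{-2f_\xi/(n-1)}g$.

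The heart of the proof is the conformal change $\tilde g=e^{2\varphi}g$ with $\varphi=-\tfrac{f_\xi}{n-1}$ and $m=2n$. The standard laws $\tilde\Delta^g f_\xi=e^{-2\varphi}\bigl(\Delta^g f_\xi-(m-2)\,g(d\varphi,df_\xi)\bigr)$ and $\tilde\delta^g(Ja)=e^{-2\varphi}\bigl(\delta^g(Ja)-(m-2)\,g(d\varphi,Ja)\bigr)$ make clear why the exponent $\tfrac{2}{n-1}$ is forced: the factor $m-2=2(n-1)$ cancels the $\tfrac1{n-1}$ coming from $d\varphi=-\tfrac1{n-1}df_\xi$, and the common prefactor $e^{-2\varphi}=e^{2f_\xi/(n-1)}$ drops out, leaving $\Delta^g f_\xi+2|df_\xi|_g^2=\delta^g(Ja)+2\,g(Ja,df_\xi)$ for the base metric $g$. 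Getting these constants and signs right, and noting that the hypothesis $2n>2$ is used precisely so that $n-1\neq 0$, is the main technical obstacle.

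Finally I would close using the Fano identities already available. Contracting $\rho^\nabla-\omega=da$ with $\omega$ and invoking the Kähler identity \eqref{commutator} gives $\delta^g(Ja)=s^c-n$, as in \eqref{scalar}; \eqref{a(xi)} gives $g(Ja,df_\xi)=-a(\xi)$; and contracting the same relation with the Killing field $\xi$, via \eqref{contraction-with-killing} and Cartan's formula exactly as in Proposition~\ref{positivity}, gives $a(\xi)=\tfrac12\Delta^g f_\xi-f_\xi$, the additive constant being fixed by the normalization $\int_M f_\xi\,e^{-2f_\xi}\omega^n=0$. Substituting these three relations turns $\Delta^g f_\xi+2|df_\xi|_g^2=\delta^g(Ja)+2\,g(Ja,df_\xi)$ into $s^c-n-2\Delta^g f_\xi+2f_\xi-2|df_\xi|_g^2=0$, i.e. $s^c_\xi=0$, and reading the chain backwards gives the converse. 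Besides the conformal bookkeeping, the point I expect to require the most care is verifying that this additive constant genuinely vanishes (equivalently, controlling the value of the Futaki integral \eqref{futaki integral} on the constant), so that the reduced equation is precisely $s^c_\xi=0$ rather than merely $s^c_\xi=\mathrm{const}$.
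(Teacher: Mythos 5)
Your argument follows the paper's proof essentially step for step: the authors likewise reduce the Hodge-theoretic hypothesis to $\delta^{\tilde g}(Ja)=\Delta^{\tilde g}f_\xi$ (their Equation~(\ref{conformal-expression})), use the conformal factor $e^{-2f_\xi/(n-1)}$ to convert this into $\Delta^g f_\xi+2|df_\xi|^2_g=\delta^g(Ja)+2g(Ja,df_\xi)$ (their Equation~(\ref{conformal-equation})), and close with the identities (\ref{scalar}), (\ref{a(xi)}) and (\ref{contraction}); your conformal bookkeeping and the role of $2n>2$ are exactly right. The one point to correct is your account of the additive constant: it is not fixed by the normalization $\int_M f_\xi\,e^{-2f_\xi}\omega^n=0$ alone. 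In the paper the constant $c$ in (\ref{contraction}) is evaluated by the weighted integration by parts carried out in the proof of Proposition~\ref{positivity}, which gives $c=-n(\lambda-1)/2=0$ for $\lambda=1$ --- but that computation invokes $s^c_\xi\equiv 0$ and is therefore only available in the forward implication. In the converse direction your chain of substitutions (and, as written, the paper's appeal to Equation~(\ref{f-expression})) yields only $s^c_\xi\equiv 2c$, with $2c\int_M e^{-2f_\xi}\omega^n=\int_M s^c_\xi\,e^{-2f_\xi}\omega^n$; you have correctly isolated this as the point that still requires a separate argument rather than following from the normalization.
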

	\begin{proof}
		We suppose that the metric $g$ induced by  $(\omega,J)$ is a GeAKRS. Then $\rho^\nabla-\omega=da$ for some $G$-invariant $1$-form $a$.
		It follows from Equation~(\ref{contraction}), with $\lambda = 1$, and Equation~(\ref{a(xi)}), that
		\begin{equation}\label{f-expression}
			-\frac{1}{2}\Delta^g f_\xi+ f_\xi=g(Ja, df_{\xi}).
		\end{equation}
		On the other hand, since the metric is a GeAKRS, we have
		$$\delta^gJa-2\Delta^gf_\xi+2f_\xi-2|df_\xi|_g^2=0$$
		where we have used~(\ref{scalar}), again with $\lambda = 1$. In particular $$f_\xi=-\frac{1}{2}\delta^gJa+\Delta^gf_\xi+|df_\xi|_g^2.$$
		Substituting into~(\ref{f-expression}) we get
		\begin{equation}\label{conformal-equation}
			\Delta^gf_\xi+2|df_\xi|_g^2=\delta^gJa+2g(Ja,df_\xi).
		\end{equation}
		Now, consider the metric $\tilde{g}=e^{-\frac{2f_\xi}{n-1}}g$ and denote by ${\delta}^{\tilde{g}}$ the codifferential with respect to the metric $\tilde{g}$.
		Then, Equation~(\ref{conformal-equation}) becomes
		\begin{equation}\label{conformal-expression}
			{\delta}^{\tilde{g}}df_{\xi}={\delta}^{\tilde{g}}Ja.
		\end{equation}
		Suppose that the Hodge decomposition of the $1$-form $a$ with respect to $\tilde{\Delta}^c=J\Delta^{\tilde{g}}J^{-1}$ is
		$$a={a}_{\tilde{H}^c}+d^ch+\tilde{\delta}^{c}\psi,$$
		where ${a}_{\tilde{H}^c}$ is $\tilde{\Delta}^c$-harmonic part, $h$ is a function, and $\psi$ is a $2$-form. Then, Equation~(\ref{conformal-expression})
		can be expressed as
		$$\Delta^{\tilde{g}}f_\xi=-\Delta^{\tilde{g}}h.$$
		Since $M$ is compact, it follows that $d^ch=-d^cf_\xi$. 
		
		For the converse, note that the hypothesis is equivalent to Equation~(\ref{conformal-expression}), and hence to  Equation~(\ref{conformal-equation}). Using Equations~(\ref{f-expression}) and~(\ref{scalar}), we obtain $s^c_{\xi} = 0$, so the almost-K\"ahler metric $g$ is a GeAKRS.
	\end{proof}
	We remark that the choice of the $G$-invariant $1$-form ${a}$ satisfying $\rho^\nabla-\omega=d{a}$ is irrelevant. Indeed, another choice of such $G$-invariant $1$-form ${a}^\prime$ satisfies $d{a}^\prime=da$. Contracting by $\omega$, we deduce using Equation~(\ref{commutator}) that ${a}$ and ${a}^\prime$ have the same $d^c$-exact part with respect to ${\Delta}^c=J\Delta^{{g}}J^{-1}$. Hence, $${\delta}^{\tilde{g}}Ja-2g(Ja,df_\xi)={\delta}^{\tilde{g}}J{a}^\prime-2g(J{a}^\prime,df_\xi),$$
where $\tilde{g}=e^{-\frac{2f_\xi}{n-1}}g$. Moreover, it follows from Cartan formula and the $G$-invariance of $a$ and ${a}^\prime$ that $$d\left(g(J(a-{a}^\prime,df_\xi)\right)=0.$$
It follows that ${\delta}^{\tilde{g}}Ja={\delta}^{\tilde{g}}J{a}^\prime$ and so $a$ and ${a}^\prime$ have the same $d^c$-exact with respect to $\tilde{\Delta}^c=J\Delta^{\tilde{g}}J^{-1}$.
	
	We observe that when the metric $g$ is K\"ahler, the $1$-form $a$ is $d^c$-exact and one obtain a K\"ahler-Ricci soliton. Therefore, Proposition~\ref{GeAKRS-exact-dc} can be seen as a generalization of~\cite[Proposition 3.2]{MR4017922} to the almost-K\"ahler setting.
	We now prove that on a compact symplectic Fano manifold, the existence of GeAKRS in $AK^G_\omega$ with non-trivial $\xi$
	is an obstruction to the existence of an almost-K\"ahler metric of constant Chern scalar curvature in $AK^G_\omega.$ This extends to the almost-K\"ahler setting the fact that K\"ahler--Ricci solitons are obstructions to the existence of K\"ahler--Einstein metrics (see for instance~\cite[Theorem 1]{MR1327154}). We remark here that, on a symplectic Fano manifold $(M,\omega)$, an almost-K\"ahler metric of constant Chern scalar curvature induced by $(\omega,J)$ satisfies $\rho^\nabla=\omega$ if and only if $\rho^\nabla$ is $J$-invariant. 

	\begin{thm}~\label{obstruction-constant-chern}
		Let $(M,\omega)$ be a compact symplectic Fano manifold. Suppose that there exists $J\in AK^G_\omega$ which induces a GeAKRS with respect to a vector field $\xi$ and $\tilde{J}\in AK^G_\omega$ which induces an almost-K\"ahler metric of constant Chern scalar curvature. Then $\xi\equiv 0.$
	\end{thm}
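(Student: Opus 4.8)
The plan is to exploit the modified Futaki invariant $\mathcal{F}^{\xi}_G$ together with its independence of the almost-complex structure. Because $J$ induces a GeAKRS we have $s^c_\xi(J)\equiv 0$, so $\mathcal{F}^{\xi}_G$ vanishes identically on $\mathfrak{g}$; in particular, evaluating at the (normalized) potential $f_\xi$ of $\xi$,
\[
\mathcal{F}^{\xi}_G(f_\xi)=\int_M s^c_\xi(J)\,f_\xi\,e^{-2f_\xi}\,\omega^n=0 .
\]
By the $J$-independence of the integral~\eqref{futaki integral} on connected components of $AK^G_\omega$ — and, $AK^G_\omega$ being connected, $J$ and $\tilde J$ lie in the same component — the same number may be computed using $\tilde J$. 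The whole proof then reduces to showing that, evaluated at the constant Chern scalar curvature structure $\tilde J$, this number is a negative multiple of $\int_M|df_\xi|_{\tilde g}^2\,e^{-2f_\xi}\,\omega^n$; its vanishing then forces $df_\xi\equiv 0$, that is $\xi\equiv 0$.

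Next I would record two facts about $\tilde g$. First, since $2\pi c_1(M)=[\omega]$, a cohomological computation gives $\int_M s^c\,\omega^n=n\int_M\omega^n$, so the constant value of the Chern scalar curvature of $\tilde g$ is exactly $n$ and
\[
s^c_\xi(\tilde J)=-2\Delta^{\tilde g}f_\xi+2f_\xi-2|df_\xi|_{\tilde g}^2 .
\]
Second, as $\tilde J\in AK^G_\omega$ is $G$-invariant and $\xi$ lies in the Lie algebra of $G$, the field $\xi=\grad_\omega f_\xi$ is Killing for $\tilde g$. Writing the symplectic Fano condition as $\tilde\rho^\nabla-\omega=d\tilde a$ with $\tilde a$ a $G$-invariant $1$-form, I contract with $\omega$ and use~\eqref{commutator} to get $\delta^{\tilde g}J\tilde a=s^c(\tilde J)-n=0$, and I contract with $\xi$, using Cartan's formula, $L_\xi\tilde a=0$, and the Killing identity~\eqref{contraction-with-killing}, to obtain
\[
\Delta^{\tilde g}f_\xi=2f_\xi+2\tilde a(\xi)+\mathrm{const},\qquad \tilde a(\xi)=-g(J\tilde a,df_\xi).
\]

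Now substitute these into $s^c_\xi(\tilde J)$ and pair with $f_\xi e^{-2f_\xi}\omega^n$. The additive constant disappears against the normalization $\int_M f_\xi e^{-2f_\xi}\omega^n=0$, and the term $\int_M\tilde a(\xi)\,f_\xi\,e^{-2f_\xi}\omega^n$ vanishes: writing $f_\xi e^{-2f_\xi}df_\xi=d\Phi(f_\xi)$ for a primitive $\Phi$ of $t\mapsto te^{-2t}$, it equals $-\int_M g(J\tilde a,d\Phi(f_\xi))\,\omega^n=-\int_M(\delta^{\tilde g}J\tilde a)\,\Phi(f_\xi)\,\omega^n=0$. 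The same two inputs (the Killing relation and $\delta^{\tilde g}J\tilde a=0$) give, after one integration by parts, the identity
\[
\int_M|df_\xi|_{\tilde g}^2 e^{-2f_\xi}\omega^n=2\int_M f_\xi^2 e^{-2f_\xi}\omega^n+2\int_M f_\xi|df_\xi|_{\tilde g}^2 e^{-2f_\xi}\omega^n .
\]
Combining, every indefinite contribution cancels and there remains
\[
\mathcal{F}^{\xi}_G(f_\xi)=-\int_M|df_\xi|_{\tilde g}^2\,e^{-2f_\xi}\,\omega^n .
\]
Since the left-hand side is $0$ and $e^{-2f_\xi}>0$, we obtain $df_\xi\equiv 0$, so $f_\xi$ is constant and $\xi=\grad_\omega f_\xi\equiv 0$.

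I expect this final collapse to be the main obstacle: a naive pairing of $s^c_\xi(\tilde J)$ with $f_\xi$ produces sign-indefinite terms (the $\tilde a(\xi)$ contribution and the cubic quantity $\int_M f_\xi|df_\xi|_{\tilde g}^2 e^{-2f_\xi}\omega^n$), and it is only the interplay of the Killing identity for $\xi$ with the coclosedness $\delta^{\tilde g}J\tilde a=0$ forced by constant Chern scalar curvature that makes them all cancel, leaving the sign-definite quantity above. The only other point requiring care is the use of a single connected component of $AK^G_\omega$ in order to identify the two evaluations of $\mathcal{F}^{\xi}_G$.
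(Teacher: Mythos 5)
Your argument is correct, and it is essentially the mirror image of the one in the paper. The paper performs the integral computation at the GeAKRS metric $g$: integrating Equation~\eqref{conformal-equation} over $M$ against the \emph{unweighted} volume $\omega^n$ gives
\[
\int_M|df_\xi|_g^2\,\omega^n=\int_M f_\xi\,\bigl(s^c(J)-n\bigr)\,\omega^n,
\]
and then invokes the $J$-independence of the unweighted $L^2$-projection of $s^c$ onto $\mathfrak{g}_\omega$; the constancy of $s^c(\tilde J)$ makes the right-hand side vanish. You instead perform the computation at the constant Chern scalar curvature metric $\tilde g$ and use the $J$-independence of the \emph{modified} invariant $\mathcal{F}^\xi_G$ (Proposition~\ref{prop: const proj}), whose vanishing is supplied by the GeAKRS at $J$. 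The extra work this costs is the weighted identity
\[
\int_M|df_\xi|_{\tilde g}^2 e^{-2f_\xi}\omega^n=2\int_M f_\xi^2 e^{-2f_\xi}\omega^n+2\int_M f_\xi|df_\xi|_{\tilde g}^2 e^{-2f_\xi}\omega^n,
\]
which does follow, as you say, from pairing $\Delta^{\tilde g}f_\xi=2f_\xi+2\tilde a(\xi)+\mathrm{const}$ with $f_\xi e^{-2f_\xi}$ and killing the $\tilde a(\xi)$-term via $\delta^{\tilde g}\tilde J\tilde a=0$; I have checked that with it your cancellation yields $\mathcal{F}^\xi_G(f_\xi)=-\int_M|df_\xi|_{\tilde g}^2e^{-2f_\xi}\omega^n$, and the conclusion follows. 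The paper's route is shorter because the unweighted pairing produces the sign-definite term directly, with no cubic term $\int_M f_\xi|df_\xi|^2e^{-2f_\xi}\omega^n$ ever appearing; yours has the mild advantage of staying entirely within the moment-map machinery already set up for $\mu_\xi$, rather than importing the projection lemma for the unmodified Chern scalar curvature. The only point you gloss over is the connectedness of $AK^G_\omega$, which is standard (the fibres of the bundle of compatible structures are contractible and the contraction can be taken $G$-equivariantly) and is equally implicit in the paper's appeal to the projection lemma.
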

	\begin{proof}
		Since $J$ induces a GeAKRS $g$, the corresponding potential $f_{\xi}$ satisfies Equation~(\ref{conformal-equation}). Integrating over $M$ yields
		\begin{equation}
			\int_M|df_\xi|_g^2\,\omega^n= \int_Mg(Ja, df_{\xi})\,\omega^n = \int_M f_\xi\,\left(\delta^gJa\right)\omega^n=\int_M f_\xi\,\left(s^c-n\right)\omega^n\label{projection}.
		\end{equation}
where the last equality uses Equation~(\ref{scalar}) with $\lambda = 1$.
		
%		it follows from the proof of Proposition~\ref{GeAKRS-exact-dc} that $f_\xi$ satisfies:
%		\begin{equation}\label{conformal-equation-bis}
%			\Delta^gf_\xi+2|df_\xi|_g^2=\delta^gJa+2g(Ja,df_\xi),
%		\end{equation}
%		where $g$ is the metric induced by $(\omega,J).$ Moreover, $s^c-n=\delta^gJa$, where $s^c$ is the Chern scalar curvature of $g.$ In particular
%		$\int_M \left(s^c-n\right)\omega^n=0.$ Integrating Equation~(\ref{conformal-equation-bis}), we get
%		\begin{equation}
%			\int_M|df_\xi|_g^2\,\omega^n=\int_M f_\xi\,\left(\delta^gJa\right)\omega^n=\int_M f_\xi\,\left(s^c-n\right)\omega^n\label{projection}.
%		\end{equation}

		Now we denote by $\mathfrak{g}_{\omega}$ the finite-dimensional space of normalized smooth functions which are Hamiltonians of $Lie(G).$ Then the $L^2$-orthogonal projection of the Chern scalar curvature $s^c$ to $\mathfrak{g}_{\omega}$ is independent of $J\in AK^G_\omega$~\cite[Lemma 3.4]{MR2747965}. It follows that if $(\omega,\tilde{J})$ induces an almost-K\"ahler metric of constant Chern scalar curvature for some $\tilde{J}\in AK^G_\omega$ then the $L^2$-orthogonal projection of $s^c-n$ to $\mathfrak{g}_{\omega}$ is zero. Hence, the right-hand side of Equation~(\ref{projection}) is zero and so $f_\xi\equiv 0.$
	\end{proof}

	\begin{ex}
		It is known that there exist (toric) K\"ahler--Ricci solitons on the blow up of the complex projective space at one point $\mathrm{C}P^2\sharp\bar{\mathrm{C}P}^2$ and at two points $\mathrm{C}P^2\sharp 2\bar{\mathrm{C}P}^2$~\cite{MR1145263,MR1417944,MR2084775}. Theorem~\ref{obstruction-constant-chern} implies that there are no (toric) almost-K\"ahler metrics of constant Chern scalar curvature in the symplectic Fano case on such manifolds. 
	\end{ex}

	We end this section by extending a result obtained in the K\"ahler case in~\cite{MR4017922} to our more general setting.
	\begin{prop}
		Let $(M,\omega)$ be a compact symplectic Fano manifold of real dimension $2n > 2$ with $\rho^{\nabla} - \omega = da$ for a fixed $J \in AK_{\omega}^G$ and a $G$-invariant $1$-form a. Then
		\begin{equation*}
			\mathcal{F}^{\xi}_G(u)=-\int_M \Delta^{\tilde{g}} (h+f_\xi)\,u\,e^{-2f_\xi}\omega^n,
		\end{equation*}
		where $\tilde{g}=e^{-\frac{2f_\xi}{n-1}}g$, and $h\in C^\infty_{G,\xi}(M,\omega)$ is the function
		defined such that $d^ch$ is the $d^c$-part in the Hodge decomposition~(\ref{hodge-twisted-decomposition}) of the $1$-form $a$ with the respect to the twisted Laplacian $\tilde{\Delta}^c=J\Delta^{\tilde{g}}J^{-1}$.
	\end{prop}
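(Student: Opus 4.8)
The plan is to reduce the definition $\mathcal{F}^{\xi}_G(u)=\int_M s^c_\xi(J)\,u\,e^{-2f_\xi}\omega^n$ to the weighted divergence of the $1$-form $a$ and then read off the contribution of $h$ from the twisted Hodge decomposition. First I would expand $s^c_\xi=s^c-n-2\Delta^gf_\xi+2f_\xi-2|df_\xi|_g^2$ and use Equation~(\ref{scalar}) with $\lambda=1$ to replace $s^c-n$ by $\delta^gJa$. Since $\xi$ lies in the center of $\mathfrak{g}$ and $J$ is $G$-invariant, $\xi$ is a Killing field, so the contraction of $\rho^\nabla-\omega=da$ with $\xi$ behind Equation~(\ref{contraction}) (with $\lambda=1$), combined with~(\ref{a(xi)}), gives $2f_\xi=\Delta^gf_\xi+2g(Ja,df_\xi)+2c$ for some constant $c$. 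Crucially, $u\in C^\infty_{G,\xi}(M,\omega)$ is normalized, so $\int_M u\,e^{-2f_\xi}\omega^n=0$ and the constant $c$ drops out of $\mathcal{F}^{\xi}_G(u)$. After this substitution the integrand collapses to $\delta^gJa-\Delta^gf_\xi+2g(Ja,df_\xi)-2|df_\xi|_g^2$.

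The second step is an integration by parts against the weighted volume $e^{-2f_\xi}\omega^n$. The terms $2g(Ja,df_\xi)$ and $-2|df_\xi|_g^2$ are precisely the correction terms that appear when the weight $e^{-2f_\xi}$ is differentiated: the adjoint of $d$ with respect to $e^{-2f_\xi}\omega^n$ sends a $1$-form $\alpha$ to $\delta^g\alpha+2g(df_\xi,\alpha)$, so the four terms recombine into a single weighted divergence and
\[
\mathcal{F}^{\xi}_G(u)=\int_M g\!\left(Ja-df_\xi,\,du\right)e^{-2f_\xi}\omega^n .
\]

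Finally I would pass to the conformal metric $\tilde{g}=e^{-2f_\xi/(n-1)}g$. Its codifferential satisfies $\delta^{\tilde{g}}\alpha=e^{2f_\xi/(n-1)}\!\left(\delta^g\alpha+2g(df_\xi,\alpha)\right)$, so the weighted volume $e^{-2f_\xi}\omega^n$ pairs with the $g$-inner product exactly as $\mathrm{vol}_{\tilde{g}}$ pairs with the $\tilde{g}$-inner product; integrating by parts in $\tilde{g}$ therefore turns the above into an integral of $\delta^{\tilde{g}}(Ja-df_\xi)\,u$. On functions $\delta^{\tilde{g}}df_\xi=\Delta^{\tilde{g}}f_\xi$, while feeding the Hodge decomposition $a=a_{\tilde{H}^c}+d^ch+\tilde{\delta}^c\psi$ with respect to $\tilde{\Delta}^c=J\Delta^{\tilde{g}}J^{-1}$ into $\delta^{\tilde{g}}(Ja)=-\tilde{\delta}^c a$ and using $\tilde{\delta}^c a_{\tilde{H}^c}=0$ together with $(\tilde{\delta}^c)^2=0$ gives $\delta^{\tilde{g}}(Ja)=-\tilde{\delta}^c d^c h=-\tilde{\Delta}^c h=-\Delta^{\tilde{g}}h$. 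Hence $\delta^{\tilde{g}}(Ja-df_\xi)=-\Delta^{\tilde{g}}(h+f_\xi)$, which inserted into the displayed integral yields the claimed identity.

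The step I expect to be most delicate is the last one: carefully matching the measure $e^{-2f_\xi}\omega^n$ in the definition of $\mathcal{F}^{\xi}_G$ with the Riemannian Laplacian $\Delta^{\tilde{g}}$ of the conformal metric that defines $h$, keeping track of the conformal factor $e^{\pm 2f_\xi/(n-1)}$ relating $\delta^{\tilde{g}}$ to the weighted adjoint of $d$, and confirming that the twisted Hodge decomposition isolates exactly the single potential $h$ (with the $\tilde{\Delta}^c$-harmonic and $\tilde{\delta}^c$-exact parts contributing nothing to $\delta^{\tilde{g}}Ja$).
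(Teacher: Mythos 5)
Your argument follows the paper's proof essentially line for line: expand $s^c_\xi$, substitute $s^c-n=\delta^gJa$ from Equation~(\ref{scalar}), eliminate $2f_\xi$ via Equations~(\ref{contraction}) and~(\ref{a(xi)}) (the constant dying against the normalization $\int_M u\,e^{-2f_\xi}\omega^n=0$), recognize the surviving terms $\delta^gJa+2g(Ja,df_\xi)-\Delta^gf_\xi-2|df_\xi|_g^2$ as the conformal quantities $\delta^{\tilde{g}}Ja-\Delta^{\tilde{g}}f_\xi$, and conclude with $\delta^{\tilde{g}}Ja=-\tilde{\Delta}^ch=-\Delta^{\tilde{g}}h$ from the twisted Hodge decomposition. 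The only cosmetic difference is your intermediate integration by parts against the weight $e^{-2f_\xi}\omega^n$; the delicate point you flag (reconciling the measure $e^{-2f_\xi}\omega^n$ with the conformal factor $e^{2f_\xi/(n-1)}$ in $\delta^{\tilde{g}}$) is treated in the paper by the same direct identification of the integrand, so your proposal is faithful to, and no less complete than, the paper's own argument.
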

	\begin{proof}
		We first recall that the $1$-form $a$ decomposes as $a=a_{\tilde{H}^c}+d^ch+\tilde{\delta}^c\tau$ with respect to the twisted Laplacian $\tilde{\Delta}^c$, for some $G$-invariant function $h$ and a $2$-form $\tau.$
		From Equation~(\ref{scalar}) with $\lambda = 1$, we have $s^c - n =\delta^gJa$, and by combining Equations~(\ref{contraction}) and (\ref{a(xi)}), we have $-\Delta^gf_\xi+2f_\xi=2g(Ja,df_\xi)+c$. We therefore obtain the following chain of equalities
		\begin{eqnarray*}
			\mathcal{F}^{\xi}_G(u)&=&\int_Ms^c_\xi \,u\,e^{-2f_\xi}\omega^n,\\
			&=&\int_M\left(s^c-n-2\Delta^gf_{\xi}+2f_\xi-2|df_\xi|^2\right)u\,e^{-2f_\xi}\omega^n,\\
			&=&\int_M\left(\delta^gJa-2\Delta^gf_{\xi}+2f_\xi-2|df_\xi|^2\right)u\,e^{-2f_\xi}\omega^n,\\
			&=&\int_M\left(\delta^gJa+2g(Ja,df_\xi)+c-\Delta^gf_\xi -2|df_\xi|^2\right)u\,e^{-2f_\xi}\omega^n,\\
			&=&\int_M\left(\delta^gJa+2g(Ja,df_\xi)-\Delta^gf_\xi -2|df_\xi|^2\right)u\,e^{-2f_\xi}\omega^n,\\
			&=&\int_M\left(\delta^{\tilde{g}}Ja-\Delta^{\tilde{g}}f_\xi \right)u\,e^{-2f_\xi}\omega^n,\\
			&=-&\int_M\Delta^{\tilde{g}}\left(h+f_\xi \right)u\,e^{-2f_\xi}\omega^n,
		\end{eqnarray*}
		where we have used the normalization $\int_Mu\,e^{-2f_\xi}\omega^n=0.$ 
	\end{proof}

\section{Deforming K\"ahler--Ricci solitons to generalized almost-K\"ahler--Ricci solitons}\label{sec2}
 
	In this section, we suppose that $(M,\omega)$ is a $4$-dimensional compact symplectic Fano manifold and $T$ is a maximal torus in $Ham(M,\omega)$.
	Suppose that $J_0\in  AK^T_\omega$ induces a K\"ahler-Ricci soliton with respect to a vector field $\xi\in \mathfrak{t}=Lie(T)$.
	We denote by $\mathfrak{t}_{\omega}\subset C^\infty_{T,\xi}(M,\omega)$ the finite-dimensional space of normalized smooth functions which are Hamiltonians of $\mathfrak{t}.$
	Let $J_t\in AK^T_\omega$ be a family of $T$-invariant, almost-complex structures compatible with $\omega$ such that $h^-_{J_t}$, the dimension of $g_t$-harmonic $J_t$-anti-invariant $2$-forms, satisfies $h^-_{J_t}=b^+(M)-1$, for small $t$. Here, $g_t$ is the almost-K\"ahler metric induced by $J_t.$ We consider the following $T$-invariant almost-K\"ahler deformations~\cite{MR3331165}
	\begin{equation*}
		\omega_{t,\phi}=\omega+d\mathbb{G}_tJ_td\Delta^{g_t}\phi,
	\end{equation*} 
	where $\mathbb{G}_t$ is the Green operator associated to the Riemannian Laplacian $\Delta^{g_t}$ and $\phi$ belongs to the space 
	$C^{\infty,\perp}_{T,\xi}(M,\omega)\subset C^{\infty}_{T,\xi}(M,\omega)$ orthogonal to $\mathfrak{t}_{\omega}$ with respect to the inner product~(\ref{inner-product}). We consider $W^{p,k}_{T,\xi,\perp}$, the completion of $C^{\infty,\perp}_{T,\xi}(M,\omega)$ with respect to the Sobolev norm involving derivatives up to order $k.$ Here $p$ and $k$ are chosen so that all coefficients are continuous.
	In the spirit of LeBrun--Simanca~\cite{MR1274118} (see also~\cite{MR2661166,MR3663320,MR3897025}), we define the map
	\begin{equation*}
		\begin{array}{lrcl} \Phi : &\mathcal{U}& \longrightarrow &
		\mathbb{R}\times  W^{p,k}_{T,\xi,\perp}  \\
		    & (t,\phi)& \longmapsto
		    &\left(t,\left(Id-\Pi_{\omega}\right)\left(Id-\Pi_{{\omega_{t,\phi}}}\right) \left(s^c_{\xi,t,\phi}\right)\right),
		\end{array}
	\end{equation*} 
	where $\mathcal{U}$ is an open neigbourhood of $(0,0)\in\mathbb{R}\times W^{p,k+4}_{T,\xi,\perp} $, and $\Pi_{\omega}$ (resp. $\Pi_{{\omega_{t,\phi}}}$) is the orthogonal projection to $\mathfrak{t}_{\omega}$ (resp. $\mathfrak{t}_{{\omega_{t,\phi}}}$) with respect to the inner product~(\ref{inner-product}), and $s^c_{\xi,t,\phi}$ is the modified Chern scalar curvature of $(\omega_{t,\phi},J_t)$. By possibly replacing $\mathcal{U}$ with a smaller open set, we can suppose that $\Phi(t,\phi)=(t,0)$ if and only if $s^c_{\xi,t,\phi}\in \mathfrak{t}_{{\omega_{t,\phi}}}$~\cite{MR1274118}. Since $s^c_{\xi,0,0}\equiv 0$, the extremal vector field $Z^T_\omega$ vanishes and by Proposition~\ref{prop: const proj}, we obtain
	that $\Phi(t,\phi)=(t,0)$ if and only if $s^c_{\xi,t,\phi}\equiv 0$ so that $(\omega_{t,\phi},J_t)$ is a GeAKRS.
	We now compute the differential of $\Phi$ with respect to $\phi$ at $(0,0)$. 
	We have the following
	
	\begin{prop}\label{differential-second}
		For a K\"ahler variation $\dot{\omega}=dd^c\dot{\phi}$, we have 
		\begin{eqnarray}
			\dot{s^c}&=&-\frac{1}{2}\left(\Delta^g\right)^2\dot{\phi}-g(\rho^\nabla,dd^c\dot{\phi}),\label{chern-variation}\\
			\dot{\left(\Delta^gf_\xi\right)}&=&g(d^c\Delta^g\dot{\phi},\xi),\label{laplace-variation}\\
			\dot{f_\xi}&=&g(d^c\dot{\phi},\xi),\label{potential-variation}\\
			\dot{|\xi|^2_g}&=&g\left(d^c\left(g(d^c\dot{\phi},\xi)\right),\xi  \right).\label{norm-variation}
		\end{eqnarray}
		It follows that the variation of the modified Chern scalar curvature is
		\begin{eqnarray}\label{modified-variation}
			\frac{\partial s^c_{\xi,0,\phi}}{\partial \phi}\Bigg|_{(0,0)}(\dot{\phi})&=&-\frac{1}{2}\left(\Delta^g\right)^2\dot{\phi}-\Delta^g\left(d^c\dot{\phi},\xi\right)+\Delta^g\dot{\phi}+2g(d^c\dot{\phi},\xi)\\
			&-&g(d^c\Delta^g\dot{\phi},\xi)-2g\left(d^c\left(g(d^c\dot{\phi},\xi)\right),\xi\right).\nonumber
		\end{eqnarray}
		Here $g$ is the K\"ahler metric induced by $(\omega,J_0)$ such that $s^c_{\xi,0,0}\equiv 0$ (i.e. $g$ is a K\"ahler--Ricci soliton).
	\end{prop}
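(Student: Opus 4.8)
The plan is to compute each of the four variations separately at the base point $(0,0)$, where $J=J_0$ is integrable so that $g$ is K\"ahler, $\nabla=D^g$, $\Delta^c=\Delta^g$, and the K\"ahler identity $\Lambda_\omega dd^c u=-\Delta^g u$ holds on functions; since $\dot\omega=dd^c\dot\phi$ is $(1,1)$, the deformations $\omega_{s}=\omega+s\,dd^c\dot\phi$ stay K\"ahler. The one structural fact I would isolate first is that $\xi$ remains Killing along the whole path: because $\dot\phi$ and $J_0$ are $T$-invariant, Cartan's formula gives $\xi\hook dd^c\dot\phi=\mathcal L_\xi d^c\dot\phi-d(\xi\hook d^c\dot\phi)=-d\big(g(d^c\dot\phi,\xi)\big)$, so $\mathcal L_\xi\omega_{s}=0$ and $\mathcal L_\xi J_0=0$, whence $\xi$ is $g_{s}$-Killing for every $s$. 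This legitimizes differentiating the identities that require $\xi$ to be a Hamiltonian Killing field. I would also record the standard K\"ahler variation of the first-Chern--Ricci form, $\dot\rho^\nabla=\tfrac12 dd^c\Delta^g\dot\phi$, which comes from $\rho^\nabla=-\tfrac12 dd^c\log\det g$ together with $\operatorname{tr}_\omega\dot\omega=\Lambda_\omega dd^c\dot\phi=-\Delta^g\dot\phi$.

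For the potential, differentiating the defining relation $\xi\hook\omega=-df_\xi$ gives $\xi\hook\dot\omega=-d\dot f_\xi$, and combining with the Cartan computation above yields $d\dot f_\xi=d\big(g(d^c\dot\phi,\xi)\big)$; linearizing the normalization $\int_M f_\xi e^{-2f_\xi}\omega^n=0$ fixes the additive constant and produces \eqref{potential-variation}. For the norm, noting $|df_\xi|^2_g=|\xi|^2_g$, I would use the pointwise identity $|\xi|^2_{g_s}=(d^c f_{\xi,s})(\xi)$, valid because $d^c f_{\xi,s}=g_s(\xi,\cdot)$ with $d^c$ depending only on the fixed $J_0$; differentiating and invoking \eqref{potential-variation} gives \eqref{norm-variation} directly, with no separate metric contribution. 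For $\dot{(\Delta^g f_\xi)}$ I would differentiate the contraction identity \eqref{contraction-with-killing}, which holds along the path by the Killing observation: $-\tfrac12 d\,\dot{(\Delta^g f_\xi)}=\xi\hook\dot\rho^\nabla=\tfrac12\,\xi\hook dd^c\Delta^g\dot\phi=-\tfrac12 d\big(g(d^c\Delta^g\dot\phi,\xi)\big)$, giving \eqref{laplace-variation} after fixing the constant.

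The variation \eqref{chern-variation} of the Chern scalar curvature is the standard K\"ahler linearization, which I would obtain by differentiating $s^c=\Lambda_\omega\rho^\nabla=g(\omega,\rho^\nabla)$ into three pieces: the term $\Lambda_\omega\dot\rho^\nabla=-\tfrac12(\Delta^g)^2\dot\phi$, the term $g(dd^c\dot\phi,\rho^\nabla)=g(\rho^\nabla,dd^c\dot\phi)$, and the variation of the pointwise inner product on $2$-forms, which contributes $-2g(\rho^\nabla,dd^c\dot\phi)$; summing gives \eqref{chern-variation}. Equivalently this is $-\mathcal D^*\mathcal D\dot\phi+\tfrac12 g(ds^c,d\dot\phi)$ for the Lichnerowicz operator $\mathcal D$, so the naively expected first-order term is already absorbed and none survives.

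Finally, for \eqref{modified-variation} I would substitute \eqref{chern-variation}--\eqref{norm-variation} into $\dot{s^c_\xi}=\dot{s^c}-2\,\dot{(\Delta^g f_\xi)}+2\,\dot f_\xi-2\,\dot{|df_\xi|^2_g}$ and then use the soliton relation $\rho^\nabla-\omega=-dd^c f_\xi$ to rewrite $-g(\rho^\nabla,dd^c\dot\phi)=\Delta^g\dot\phi+g(dd^c f_\xi,dd^c\dot\phi)$. Matching with \eqref{modified-variation} then reduces to the pointwise identity $g(dd^c f_\xi,dd^c\dot\phi)-g(d^c\Delta^g\dot\phi,\xi)=-\Delta^g\big(g(d^c\dot\phi,\xi)\big)$, which I expect to be the main obstacle. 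I would prove it as a Bochner/Weitzenb\"ock identity: writing $V=\operatorname{grad}_g f_\xi$ so that $\xi=-JV$, the left-hand side is, up to a constant, the commutator $[\Delta^g,V]\dot\phi$, and the crucial inputs are that $\xi$ is \emph{holomorphic} (so $D^g d^c f_\xi$ is $J$-invariant and $g(dd^c f_\xi,dd^c\dot\phi)$ is a pure Hessian pairing) together with \eqref{contraction-with-killing}, which trades the Ricci curvature appearing in the Weitzenb\"ock formula for $d\Delta^g f_\xi$; the curvature terms then cancel and the identity collapses. The delicate points throughout are the consistent bookkeeping of signs in the K\"ahler identities and the verification that the inner-product variation entering \eqref{chern-variation} and the curvature cancellation in this last step come out exactly.
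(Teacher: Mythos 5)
Your proposal is correct and follows essentially the same route as the paper: both compute the four variations from $\dot{\rho}^\nabla=\tfrac12 dd^c\Delta^g\dot\phi$, the contraction identity~(\ref{contraction-with-killing}), and the defining relation $\xi\hook\omega=-df_\xi$, then use the soliton equation $\rho^\nabla-\omega=-dd^cf_\xi$ to convert $-g(\rho^\nabla,dd^c\dot\phi)$, with the crux in both cases being the pointwise identity $g(dd^cf_\xi,dd^c\dot\phi)=g(\xi,d^c\Delta^g\dot\phi)-\Delta^g\bigl(g(d^c\dot\phi,\xi)\bigr)$. The only differences are presentational: the paper simply cites LeBrun--Simanca for~(\ref{chern-variation}) and disposes of the key identity by a short adjunction/commutation computation, whereas you rederive~(\ref{chern-variation}) and propose a Bochner--Weitzenb\"ock proof of the identity (which does work, since holomorphy of $\xi$ makes $D^gdf_\xi$ $J$-invariant and~(\ref{contraction-with-killing}) cancels the Ricci term), and you make explicit the fact, left implicit in the paper, that $\xi$ remains Killing along the path.
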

	\begin{proof}
		First of all, Equation~(\ref{chern-variation}) follows from~\cite{MR1274118}.\\
		To establish Equation~(\ref{laplace-variation}), we have
		$\dot{\rho^\nabla}=\frac{1}{2}dd^c\Delta^g\dot{\phi}$~\cite{MR1274118} and so it follows from Equation
		~(\ref{contraction-with-killing}), that
		\begin{eqnarray*}
			-\frac{1}{2}d\left(\dot{\left(\Delta^gf_\xi\right)}\right)=\dot{\rho^\nabla}(\xi, \cdot) =\frac{1}{2}\left(dd^c\Delta^g\dot{\phi}\right)(\xi, \cdot) =-\frac{1}{2}d\left(g\left(d^c\Delta^g\dot{\phi},\xi\right)\right),
		\end{eqnarray*}
		hence
		$$\dot{\left(\Delta^gf_\xi\right)}=g\left(d^c\Delta^g\dot{\phi},\xi\right)+c_1,$$
		where $c_1$ is a constant. We claim that $c_1$ is zero, indeed, $\int_M\,(\Delta^gf_\xi)\,\omega^2=0$ and hence 
		\begin{eqnarray*}
			0&=&\int_M\dot{(\Delta^gf_\xi)}\,\omega^2+\int_M(\Delta^gf_\xi)\,\dot{\omega^2},\\
			&=&\int_M\dot{(\Delta^gf_\xi)}\,\omega^2-\int_M(\Delta^gf_\xi)(\Delta^g\dot{\phi})\,{\omega^2},\\
			&=&\int_M\dot{(\Delta^gf_\xi)}\,\omega^2-\int_Mg\left(d^c\Delta^g\dot{\phi},\xi\right){\omega^2}.
		\end{eqnarray*}
		Equation~(\ref{potential-variation}) follows from $\dot{\omega}(\xi, \cdot)=-d\dot{f_\xi}$ and so $\left(dd^c\dot{\phi}\right)(\xi, \cdot)=-d\dot{f_\xi}$. Using
		Cartan formula, we get that $g(d^c\dot{\phi},\xi)=\dot{f_\xi}+c_2,$ where $c_2$ is a constant. 
		We claim that $c_2$ is zero. Indeed, if we vary the integral $\int_Mf_\xi\,\omega^2$ we get that $$0=\int_M\dot{f_\xi}\,\omega^2+\int_Mf_\xi\,\dot{\omega^2}=\int_M\dot{f_\xi}\,\omega^2-\int_Mf_\xi\,\Delta^g\dot{\phi}\,{\omega^2}=\int_M\dot{f_\xi}\,\omega^2-\int_Mg(d^c\dot{\phi},\xi)\,{\omega^2}.$$  
		Equation~(\ref{norm-variation}) follows from Equation~(\ref{potential-variation}).
		
		To get the variation formula of the modified Chern scalar curvature $s^c_{\xi,0,\phi}$, we first combine Equations~(\ref{chern-variation}),
		~(\ref{laplace-variation}),~(\ref{potential-variation}), and~(\ref{norm-variation}). Then, we compute the term $g(\rho^\nabla,dd^c\dot{\phi})$. Here, we use the fact that $g$ is a K\"ahler--Ricci soliton. Indeed, $\rho^\nabla-\omega=-dd^cf_\xi$ so 
		\begin{eqnarray}
			g(\rho^\nabla,dd^c\dot{\phi})&=&g(\omega,dd^c\dot{\phi})-g(dd^cf_\xi,dd^c\dot{\phi}),\nonumber\\
			&=&-\Delta^g\dot{\phi}-g(d\xi,dd^c\dot{\phi}),\nonumber\\
			&=&-\Delta^g\dot{\phi}-g(\xi,\Delta^gd^c\dot{\phi})+\Delta^g\left(g(d^c\dot{\phi},\xi)\right),\nonumber\\
			&=&-\Delta^g\dot{\phi}-g(\xi,d^c\Delta^g\dot{\phi})+\Delta^g\left(g(d^c\dot{\phi},\xi)\right),\label{chern-ricci-term}
		\end{eqnarray}
		where we use the adjunction formula in the last line and the fact that $\Delta^gd^c=d^c\Delta^g$ in the K\"ahler setting. Finally, Equation~(\ref{modified-variation}) follows from~(\ref{chern-ricci-term}).
	\end{proof}

	\begin{prop}
		The differential of $\Phi$ at $(0,0)$ is given by
		\begin{equation}
			\left({\bold{T}}_{(0,0)}\Phi\right)(t,\dot{\phi})=\left(t,t\left(\frac{1}{2}\delta^{g_0}\delta^{g_0}\dot{g}+2\delta^{g_0}\dot{J}d^cf_\xi-2\dot{g}(\xi,\xi)\right)+\mathbb{L}_\xi\left(\Delta^{g_0}\dot{\phi}+2\,g_0\left(d^c\dot{\phi},\xi\right)\right)\right),
		\end{equation}
		where $\dot{g}=\frac{d}{dt}|_{t=0}g_t,\dot{J}=\frac{d}{dt}|_{t=0}J_t$, and $\mathbb{L}_\xi$ is the linear elliptic self-adjoint (with respect to the inner product~(\ref{inner-product})) operator defined by
		$$\mathbb{L}_\xi(\phi)=-\frac{1}{2}\Delta^{g_0}\phi+\phi-g_0(d^c\phi,\xi).$$
	\end{prop}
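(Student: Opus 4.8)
The plan is to linearise the two components of $\Phi$ separately, exploiting that the base point is a GeAKRS so that $s^c_{\xi,0,0}\equiv 0$. The first component of $\Phi$ is the identity on the $\mathbb{R}$-factor, so it contributes $t$. For the second component, write $G(t,\phi)=(Id-\Pi_\omega)(Id-\Pi_{\omega_{t,\phi}})(s^c_{\xi,t,\phi})$ and differentiate at $(0,0)$. Since $\omega_{0,0}=\omega$ we have $\Pi_{\omega_{0,0}}=\Pi_\omega$, and by the product rule every term in which one of the projections is differentiated is multiplied by $s^c_{\xi,0,0}=0$ and hence drops out. Using that $Id-\Pi_\omega$ is idempotent, this reduces $G$ to $(Id-\Pi_\omega)$ applied to the total linearisation of $s^c_\xi$, which I would split as $t\,\partial_t s^c_\xi|_{(0,0)}+\partial_\phi s^c_\xi|_{(0,0)}(\dot\phi)$, the first coming from the variation of $J_t$ at fixed $\omega$ and the second from the Kähler variation $\dot\omega=dd^c\dot\phi$ at fixed $J_0$.

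The $\dot\phi$-direction is immediate from Proposition~\ref{differential-second}: Equation~(\ref{modified-variation}) gives $\partial_\phi s^c_\xi|_{(0,0)}(\dot\phi)$ explicitly, and expanding the definition of $\mathbb{L}_\xi$ one checks, by collecting the six summands, that it equals $\mathbb{L}_\xi(\Delta^{g_0}\dot\phi+2g_0(d^c\dot\phi,\xi))$; this is a purely algebraic rearrangement. For the $t$-direction I would fix $\phi=0$, so that $\omega_{t,0}=\omega$ is constant and therefore both the volume form $\omega^n$ and the potential $f_\xi$ (determined by $\omega(\xi,\cdot)=-df_\xi$) are independent of $t$, while only $J_t$, and hence the metric $g_t$, varies. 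Differentiating $s^c_\xi=s^c-n-2\Delta^g f_\xi+2f_\xi-2|df_\xi|_g^2$ term by term, the $n$ and $2f_\xi$ terms contribute nothing, and since $|df_\xi|_g^2=|\xi|_g^2=g(\xi,\xi)$ (because $(df_\xi)^\sharp=-J\xi$) the last term contributes exactly $-2\dot g(\xi,\xi)$. It then remains to show that $\dot{s^c}-2\,\widehat{\Delta^g f_\xi}=\tfrac12\delta^{g_0}\delta^{g_0}\dot g+2\delta^{g_0}\dot J d^c f_\xi$, where the hat denotes the $J$-variation.

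I expect this last identity to be the main obstacle. Unlike Proposition~\ref{differential-second}, which varies $\omega$ at fixed $J$, here I must produce the variation of the Chern scalar curvature $s^c$ and of $\Delta^g f_\xi=\delta^g df_\xi$ under a deformation of the almost-complex structure with $\omega$ (and hence $f_\xi$) held fixed, using that $\dot J$ anticommutes with $J$ and is $g$-symmetric. The leading contribution $\tfrac12\delta^{g}\delta^{g}\dot g$ is the usual linearisation of the Chern scalar curvature, while the first-order term $2\delta^{g}\dot J d^c f_\xi$ should emerge from the variation of the codifferential acting on the fixed $1$-form $df_\xi$; assembling these requires the Chern connection together with the Kähler identity~(\ref{commutator}), and is the genuinely technical computation.

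Finally, to remove the outer projection and land on the stated formula, I would argue that the linearisation already lies in the orthogonal complement of $\mathfrak{t}_\omega$. For the $t$-part this follows from Proposition~\ref{prop: const proj}: since $J_t$ is a path in $AK^T_\omega$, the projection $\Pi_\omega s^c_\xi(J_t)$ is constant, so $\Pi_\omega\,\partial_t s^c_\xi=0$. For the $\dot\phi$-part it follows from self-adjointness of $\mathbb{L}_\xi$ together with the facts that $\mathbb{L}_\xi$ sends $\mathfrak{t}_\omega$ into the constants (a restatement of the Killing-potential identity~(\ref{contraction}) on the soliton) and that $\Delta^{g_0}\dot\phi+2g_0(d^c\dot\phi,\xi)$ pairs to zero with $e^{-2f_\xi}\omega^n$, since its product with $e^{-2f_\xi}$ equals $\delta^{g_0}(e^{-2f_\xi}d\dot\phi)$. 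Hence $(Id-\Pi_\omega)$ acts as the identity and the differential is as claimed.
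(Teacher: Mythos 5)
Your overall strategy coincides with the paper's: the projection terms drop out because $s^c_{\xi,0,0}\equiv 0$ together with Proposition~\ref{prop: const proj}; the $\dot\phi$-direction is Proposition~\ref{differential-second} rewritten algebraically in terms of $\mathbb{L}_\xi$ (your check that the six summands of~(\ref{modified-variation}) reassemble into $\mathbb{L}_\xi(\Delta^{g_0}\dot\phi+2g_0(d^c\dot\phi,\xi))$ is correct); and since $\omega$ and hence $f_\xi$ are frozen along $t\mapsto(\omega,J_t)$, the term $-2|df_\xi|^2_g=-2g(\xi,\xi)$ indeed contributes $-2\dot g(\xi,\xi)$. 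Your justification that the image of the linearization is orthogonal to $\mathfrak{t}_\omega$ (self-adjointness of $\mathbb{L}_\xi$ plus $\ker\mathbb{L}_\xi=\mathfrak{t}_\omega$ for the $\dot\phi$-part, Proposition~\ref{prop: const proj} for the $t$-part) is if anything more explicit than what the paper records.

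The one substantive gap is precisely the step you flag as ``the genuinely technical computation'': the identities $\dot{s^c}=\tfrac12\delta^{g_0}\delta^{g_0}\dot g$ and $\frac{d}{dt}\big|_{t=0}\bigl(\Delta^{g_t}f_\xi\bigr)=\delta^{g_0}\dot Jd^cf_\xi$ are stated as targets but not established, so as written the $t$-direction is not proved. The paper disposes of both cheaply: the first is quoted from LeBrun--Simanca~\cite{MR1274118} (the standard linearization of the Chern scalar curvature under a variation of $J$ at fixed $\omega$), and the second follows in one line from the pointwise identity $\Delta^{g_t}f_\xi\,\omega^2=-2\,dJ_tdf_\xi\wedge\omega$, in which $\omega$ and $f_\xi$ are $t$-independent so that differentiating only hits $J_t$, followed by the K\"ahler identity~(\ref{commutator}) to convert the resulting $d(\dot Jdf_\xi)\wedge\omega$ term into a codifferential. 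If you cite the reference for the first and supply this two-step argument for the second, your proof is complete and matches the paper's.
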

	\begin{proof}
		Using Proposition~\ref{prop: const proj}, the projection $\Pi_{{\omega_{t,0}}} \left(s^c_{\xi,t,0}\right)$ is independent of $t$. Moreover,
		the variation of the Chern scalar curvature $s^c_{t,0}$ of $(\omega,J_t)$ is given by $\frac{1}{2}\,\delta^{g_0}\delta^{g_0}\dot{g}$ (see~\cite{MR1274118}). Using the fact that $\Delta^{g_t}f_\xi\,\omega^2=-2\,dJ_tdf_\xi\wedge \omega$, together with Equation~(\ref{commutator}), we deduce that
		$\frac{d}{dt}|_{t=0}\left(\Delta^{g_t}\right)(f_\xi)=\delta^{g_0}\dot{J}d^cf_\xi$. Then the differential of $\Phi$ with respect to $\phi$ follows from Proposition~\ref{differential-second}.
	\end{proof}

	Recall that on a compact K\"ahler manifold, a symplectic gradient $grad_\omega \psi$ is holomorphic if and only if $-\frac{1}{2}d\Delta^{g}\psi=\rho^\nabla(grad_\omega \psi, \cdot)$~\cite{MR0124009} (see also~Lemma \ref{lichnerowicz-AK}). Since $(\omega,J_0)$ is a K\"ahler--Ricci soliton, we have $\rho^\nabla-\omega=-dd^cf_\xi,$ so for a $T$-invariant function $\psi$, we see that $grad_\omega \psi$ is holomorphic if and only if $\mathbb{L}_\xi(\psi)=0$~\cite[Lemma 2.2]{MR1768112}. By the maximality of the torus $T$, it follows that ${\psi}\in \operatorname{ker}\left(\mathbb{L}_{\xi}\right)$ if and only if $\psi\in \mathfrak{t}_{\omega}.$ Now, if $\left({\bold{T}}_{(0,0)}\Phi\right)(t,\dot{\phi})=(0,0)$, then $\mathbb{L}_\xi\left(\Delta^{g_0}\dot{\phi}+2\,g_0\left(d^c\dot{\phi},\xi\right)\right)=0$ and therefore $$\Delta^{g_0}\dot{\phi}+2\,g_0\left(d^c\dot{\phi},\xi\right)\in \mathfrak{t}_{\omega}.$$
	However, $\dot{\phi}$ is orthogonal to $ \mathfrak{t}_{\omega}$ with respect to the inner product~(\ref{inner-product}). Hence, we have
	\begin{eqnarray*}
		0&=&\int_M\dot{\phi}\,\left(\Delta^{g_0}\dot{\phi}+2\,g_0\left(d^c\dot{\phi},\xi\right)\right)\,e^{-2f_\xi}\omega^2,\\
		&=&\int_M \|d\dot{\phi}\|^2_{g_0}+\dot{\phi}\,g_0\left(d\dot{\phi},d\left(e^{-2f_\xi}\right)\right)+2\,\dot{\phi}\,g_0\left(d^c\dot{\phi},\xi\right)\,e^{-2f_\xi}\,\omega^2,\\
		&=&\int_M \|d\dot{\phi}\|^2_{g_0}-2\,\dot{\phi}\,g_0\left(d^c\dot{\phi},\xi\right)\,e^{-2f_\xi}+2\dot{\phi}\,g_0\left(d^c\dot{\phi},\xi\right)\,e^{-2f_\xi}\,\omega^2,\\
		&=&\int_M \|d\dot{\phi}\|^2_{g_0}\omega^2.
	\end{eqnarray*}
	Thus, if $\left({\bold{T}}_{(0,0)}\Phi\right)(t,\dot{\phi})=(0,0)$ then $\dot{\phi}\equiv 0.$ Using standard elliptic theory, we deduce that ${\bold{T}}_{(0,0)}\Phi$
	is an isomorphism from $\mathbb{R}\oplus W^{p,k+4}_{T,\xi,\perp}$ to $\mathbb{R}\oplus W^{p,k}_{T,\xi,\perp}$. It follows from the inverse function theorem for Banach spaces that for a small $t$ there exists $\phi_t$ such that $s^c_{\xi,t,\phi_t}\equiv 0.$ 

	So far we achieved the existence of solutions $\phi_t$ of the GeAKRS equation in $W^{p,k+4}_{T,\xi,\perp}$.
	The Sobolev embedding for $k$ large enough ensures that we find an a priori $\mathcal{C}^4$ bound on the solutions;
	consequently, the metrics $\omega_{t,\phi_t}$ are at least $\mathcal{C}^2$. 
	%The extremal vector field  $Z^T_{\omega_{t,\phi_t}}=Z^T_{\omega}$ is smooth
	%for any almost-K\"ahler metric. In particular, for the solutions the dual of the $Z^T_{\omega_{t,\phi_t}}$ with respect to $\omega_{t,\phi_t}$ is of regularity $\mathcal{C}^4$ hence the
	%modified Chern scalar curvature $s^c_{\xi,t,\phi_t}$ is of regularity $\mathcal{C}^5.$
	Moreover, the GeAKRS equation satisfied by $\phi_t$ can be written as
	$$ s^c(\omega_{t,\phi_t}) = n+2 \Delta_{\omega_{t,\phi_t}}f_\xi -2f_\xi +2|df_\xi|_{\omega_{t,\phi_t}}^2. $$
	Notice that, since $f_\xi$ is a fixed smooth function, the right-hand side in the above equation has the same regularity as $\omega_{t,\phi_t}$.
	Therefore $s^c(\omega_{t,\phi_t})$ has the same regularity as $\omega_{t,\phi_t}$.
	Now, the Chern scalar curvature has the following expression
	$$ s^c(\omega_{t,\phi_t}) = \Lambda_{\omega_{t,\phi_t}}\rho^\nabla(\omega_{t,\phi_t}) =
	\Lambda_{\omega_{t,\phi_t}}\left(\rho^\nabla(\omega)- \frac{1}{2}dJ_td \log\left(\frac{v_{\omega_{t,\phi_t}}}{v_{\omega}}\right)\right),$$
	therefore,
	$$ s^c(\omega_{t,\phi_t}) - \Lambda_{\omega_{t,\phi_t}}\rho^\nabla(\omega) =  \Delta_{\omega_{t,\phi_t}}\left(\log \left(\frac{v_{\omega_{t,\phi_t}}}{v_{\omega}}\right)\right).$$
	Since the linearization of the volume form $v_{\omega_{t,\phi_t}}$ is an elliptic system in $\mathbb{G}_tJ_td\Delta^{g_t}\phi_t$, the volume form $v_{\omega_{t,\phi_t}}$ has the same regularity as the metric $\omega_{t,\phi_t}$ itself, as well as the left-hand side of this equation. The smooth regularity of the solution follows by the usual bootstrap argument via Schauder’s estimates for elliptic operators.
	
	We obtain then the following theorem:
	\begin{thm}\label{deformations}
		Let $(M,\omega,J,g)$ be a $4$-dimensional compact K\"ahler manifold such that the metric is a K\"ahler--Ricci soliton
		with respect to the vector field $\xi$. Let $T$ be a maximal torus in $Ham(M,\omega)$ such that the Lie algebra of $T$
		contains $\xi$. Let $J_t$ be any smooth family of $T$-invariant, $\omega$-compatible almost-complex structures such that $J_0=J$ and for small $t$, we have $h^-_{J_t}=b^+-1.$ Then, there exists a family of $T$-invariant, $\omega$-compatible almost-complex structures $\tilde{J}_t$ such that $(\omega,\tilde{J}_t)$ is GeAKRS for small enough $t$ and
		$\tilde{J}_0=J_0$. Moreover, $\tilde{J}_t$ is diffeomorphic to $J_t$ for each $t.$
	\end{thm}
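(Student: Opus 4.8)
The plan is to read essentially all of the statement off the construction carried out just above, and then to add a single geometric step that trades the varying symplectic forms for the fixed one $\omega$. The preceding computation of $\mathbf{T}_{(0,0)}\Phi$, together with the integration by parts showing that its kernel is trivial (the chain of equalities ending in $\int_M\|d\dot\phi\|^2_{g_0}\omega^2=0$), shows that $\mathbf{T}_{(0,0)}\Phi$ is an isomorphism of the relevant Sobolev completions. The inverse function theorem for Banach spaces then yields, for each small $t$, a function $\phi_t\in C^{\infty,\perp}_{T,\xi}(M,\omega)$ with $s^c_{\xi,t,\phi_t}\equiv 0$, so that $(\omega_{t,\phi_t},J_t)$ is a GeAKRS, and the bootstrap via Schauder estimates promotes $\phi_t$, hence $\omega_{t,\phi_t}$, to a smooth object depending smoothly on $t$. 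Since $(\omega,J_0)$ is already a K\"ahler--Ricci soliton we have $\phi_0=0$ and $\omega_{0,\phi_0}=\omega$.

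First I would record that, by construction, $\omega_{t,\phi_t}-\omega=d\mathbb{G}_tJ_td\Delta^{g_t}\phi_t$ is \emph{exact}, so $[\omega_{t,\phi_t}]=[\omega]$ for all small $t$; moreover the whole family is $T$-invariant, depends smoothly on $t$, and satisfies $\omega_{0,\phi_0}=\omega$. An equivariant version of Moser's stability theorem then produces a smooth family of $T$-equivariant diffeomorphisms $\Psi_t\colon M\to M$ with $\Psi_0=\operatorname{id}$ and $\Psi_t^{\ast}\omega_{t,\phi_t}=\omega$. Equivariance is obtained by choosing the interpolating primitive and the generating time-dependent vector field $T$-invariantly, which is possible because every ingredient of Moser's construction can be averaged over the compact torus $T$.

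Then I would set $\tilde J_t:=\Psi_t^{\ast}J_t=(d\Psi_t)^{-1}\circ J_t\circ d\Psi_t$ and verify the four required properties. Compatibility with $\omega$ is immediate: since $J_t$ is $\omega_{t,\phi_t}$-compatible and $\Psi_t^{\ast}\omega_{t,\phi_t}=\omega$, the pulled-back structure $\tilde J_t$ is $\omega$-compatible and $\Psi_t$ is an isometry from $(\omega,\tilde J_t)$ onto $(\omega_{t,\phi_t},J_t)$; being the pullback of a $T$-invariant structure by a $T$-equivariant map, $\tilde J_t\in AK^T_\omega$. At $t=0$ one has $\Psi_0=\operatorname{id}$, hence $\tilde J_0=J_0$, and by construction $\tilde J_t$ is carried to $J_t$ by the diffeomorphism $\Psi_t$, so the two are diffeomorphic for every $t$. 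Finally, because $\Psi_t$ is $T$-equivariant it preserves $\xi\in\operatorname{Lie}(T)$, so it pulls the $\omega_{t,\phi_t}$-Hamiltonian potential of $\xi$ back to the $\omega$-potential $f_\xi$; the normalization $\int_M f_\xi\,e^{-2f_\xi}\omega^n=0$ is preserved since $\Psi_t$ identifies the weighted volume forms $e^{-2f_\xi}\omega^n$. As $s^c_\xi$ is built naturally from $s^c$, $\Delta^g$, $f_\xi$ and $|df_\xi|^2_g$, it transforms as a scalar under the isometry $\Psi_t$, whence $s^c_\xi(\omega,\tilde J_t)=\Psi_t^{\ast}\bigl(s^c_\xi(\omega_{t,\phi_t},J_t)\bigr)\equiv 0$ and $(\omega,\tilde J_t)$ is a GeAKRS.

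The genuinely new point, and the only step beyond bookkeeping, is this equivariant Moser argument: one must guarantee that the diffeomorphisms realizing $[\omega_{t,\phi_t}]=[\omega]$ can be chosen $T$-equivariantly, so that $T$-invariance of the almost-complex structures survives, and that the GeAKRS equation---which singles out the vector field $\xi$ and its potential---is genuinely natural under these $\xi$-preserving maps. Both facts follow from the compactness of $T$ (which permits averaging) and from $\xi\in\operatorname{Lie}(T)$, but they are exactly where care is needed; everything else is inherited from the linearization and regularity analysis established above.
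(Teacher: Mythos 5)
Your proposal follows essentially the same route as the paper: the theorem is stated there as a summary of the preceding inverse-function-theorem argument and regularity bootstrap, which produce the GeAKRS pairs $(\omega_{t,\phi_t},J_t)$. The only difference is that you spell out the $T$-equivariant Moser isotopy needed to trade $\omega_{t,\phi_t}$ for the fixed form $\omega$ and define $\tilde J_t=\Psi_t^{*}J_t$ — a step the paper leaves implicit in the phrase ``$\tilde J_t$ is diffeomorphic to $J_t$'' — and your treatment of it (averaging over the compact torus, naturality of $s^c_\xi$ under $\xi$-preserving isometries) is correct.
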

%\subsection{More examples by }

\section{Toric Case}\label{toric-section}

	Let $(M,\omega)$ be a compact symplectic manifold of real dimension $2n$ equipped with an effective Hamiltonian action of an $n$-dimensional torus $T.$ We denote by $z:M\longrightarrow \Delta\subset \mathfrak{t}^\ast$ the moment map, where $\Delta$ is the Delzant polytope in $\mathfrak{t}^\ast$, the dual of  $\mathfrak{t}=Lie(T).$ A $T$-invariant almost-K\"ahler metric $g$ induced by an almost-complex structure $J$ compatible with $\omega$ is of involutive type if the $g$-orthogonal distribution to the $T$-orbits is involutive. 
	Given an almost-K\"ahler metric $g$ of involutive type, there exists local coordinates $\{t_1,\cdots,t_n\}$ such that $dt_1,\cdots,dt_n$ span the annihilator of the $g$-orthogonal distribution to the $T$-orbits and we can write
	$$\omega=\sum_{i=1}^ndz_i\wedge dt_i,$$
	$$g=\sum_{i,j=1}^n G_{ij}(z)dz_i\otimes dz_j+H_{ij}(z)dt_i\otimes dt_j,$$
	where $G=\left(G_{ij}(z)\right)$ and $H=\left(H_{ij}(z)\right)$ are symmetric positive-definite matrix-valued functions which are smooth in the interior of $\Delta$ and satisfy $H=G^{-1}.$ Necessary and sufficient conditions for $H$ to be induced by a globally defined almost-K\"ahler metric are given in~\cite{MR1895351,MR2154300,MR2144249}. The metric $g$ is K\"ahler if and only if $$\frac{\partial G_{ij}}{\partial z_k}=\frac{\partial G_{kj}}{\partial z_i}.$$ The first-Chern--Ricci form and the Chern scalar curvature are given by~\cite{MR1988506,MR2747965}
	$$\rho^\nabla=-\frac{1}{2}\sum_{i,k,l}H_{li,ik}dz_k\wedge dt_l,$$
	$$s^c=-\frac{1}{2}\sum_{i,j}H_{ij,ij},$$
	where $H_{ij,k}=\frac{\partial H_{ij}}{\partial z_k},$ etc. A Hamiltonian Killing potential $f_\xi$ can be expressed
	as $f_\xi=a_1z_1+a_2z_2+\cdots+a_nz_n+a_{n+1},$ where $a_i$ are real numbers. Then we can compute
	$$dJdf_\xi=\sum_{i,l,j=1}^na_iH_{il,k}dz_k\wedge dt_l.$$ 
	Moreover, the following hold:
	$$\Delta^gf_\xi=-\sum_{i,j}^na_iH_{ij,j}, \quad \text{and } \quad |df_\xi|^2_g=\sum_{i,j}^na_ia_jH_{ij}.$$
	Putting these equations together, we obtain a formula for the modified Chern scalar curvature in the toric case:
	$$ s^c_\xi = -\frac{1}{2}\sum_{i,j}H_{ij,ij} + 2\sum_{i,j}^na_iH_{ij,j} - 2 \sum_{i,j}^na_ia_jH_{ij} + 2\left(\sum_{i}^n a_i z_i + a_{n+1}\right) - n .$$
	It is easy to check then that
	\begin{equation}\label{toric-equation}
		e^{-2f_\xi}s^c_\xi=-\frac{1}{2}\sum_{i,j=1}^n\left(e^{-2f_\xi}H_{ij}\right)_{,ij}+e^{-2f_\xi}(2f_\xi-n) 
	\end{equation}
	We suppose that the metric $g$ is a GeAKRS. Since the metric $g$ is of involutive type, the matrix $H$ satisfies the boundary conditions in~\cite[Proposition 1]{MR2144249}, so we can apply~\cite[Lemma 2]{MR3941493} and Equation~\ref{toric-equation} becomes
	$$\int_{\partial\Delta}e^{-2f_\xi}d\mu+\int_\Delta (2f_\xi-n)e^{-2f_\xi}dv=0,$$
	where $\Delta$ is the polytope and $\partial\Delta$ its boundary, $dv=dz_1\wedge\cdots\wedge dz_n$ and $d\mu$ is defined by $u_j\wedge d\mu=-dv$ for any codimension one face with inward normal $u_j.$ 

	The Donaldson--Futaki invariant~\cite{MR1988506} for any function $\zeta=\zeta(z_1, \cdots, z_n)$ is
	$$\mathcal{F}_{\Delta,\xi}(\zeta)=\int_{\partial\Delta}\zeta e^{-2f_\xi}d\mu+\int_\Delta \zeta (2f_\xi-n)e^{-2f_\xi}dv.$$
	Using~\cite[Lemma 2]{MR3941493}, we can deduce that the existence of a GeAKRS implies that $\mathcal{F}_{\Delta,\xi}(\zeta)=0$
	for any affine function $\zeta=\zeta(z_1,\cdots,z_n)$.

	%Therefore, given any other Hamiltonian Killing potential $u=b_1z_1+b_2z_2+\cdots+b_nz_n+b_{n+1},$ the Futaki invariant reads as the following integral:
	%\begin{equation*}
	%	\begin{split}
	%		\mathcal{F}^{\xi}_T(u)=\bigintss_M \left[-\frac{1}{2}\sum_{i,j}H_{ij,ij} + \sum_{i,j}^na_iH_{ij,j} - 2 \sum_{i,j}^na_ia_jH_{ij} + 2\left(\sum_{i}^n a_i z_i + a_{n+1}\right) - n \right]\\
	%		\left( \sum_{i}^n b_i z_i + b_{n+1} \right) \exp\left(-2\sum_{i}^n a_i z_i -2 a_{n+1}\right) \omega^n.
	%	\end{split}
	%\end{equation*}

	\begin{lem}\label{toric-deformations}
		Let $(M,\omega,J_0,g_0)$ be a $2n$-dimensional compact toric K\"ahler Fano manifold such that the metric is $T$-invariant and it is a K\"ahler--Ricci soliton with respect to the vector field $\xi$. Then, there exists an infinite-dimensional family of strictly almost-K\"ahler metrics $g_t$ (associated to almost-complex structures $J_t$ through $\omega$), defined for small $t$, such that $$\rho^\nabla_t-\omega=dJ_td f_\xi,$$ where $\rho^\nabla_t$ is the first-Chern--Ricci form of $(\omega,J_t,g_t).$ In particular $g_t$ is a GeAKRS.
	\end{lem}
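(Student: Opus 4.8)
The plan is to read everything off the toric data and exploit that the relevant equation is \emph{linear} in the metric. Encode the given K\"ahler--Ricci soliton $g_0$ by its symmetric positive-definite matrix function $H^{(0)}=H^{(0)}(z)$ on $\Delta$, so that in particular $\rho^\nabla_0-\omega=dJ_0df_\xi$ (the soliton equation). By \eqref{toric-equation}, the GeAKRS condition $s^c_\xi\equiv0$ is equivalent to
\[ \sum_{i,j=1}^n\big(e^{-2f_\xi}H_{ij}\big)_{,ij}=2\,e^{-2f_\xi}(2f_\xi-n), \]
which, with $f_\xi=\sum_i a_iz_i+a_{n+1}$ (hence the $a_i$) fixed, is affine-linear in the entries $H_{ij}$. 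I would therefore seek solutions of the form $H^{(t)}=H^{(0)}+tK$ with $K$ symmetric: since $H^{(0)}$ already solves the equation, $H^{(t)}$ solves it for every $t$ exactly when $\sum_{i,j}(e^{-2f_\xi}K_{ij})_{,ij}=0$.

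To obtain not merely the GeAKRS condition but the sharper identity $\rho^\nabla_t-\omega=dJ_tdf_\xi$ of the statement, I would impose on $K$ the stronger (still linear) requirement that each column of $e^{-2f_\xi}K$ be divergence-free, i.e.\ $\sum_i\partial_i(e^{-2f_\xi}K_{il})=0$ for every $l$. Matching the toric expressions for $\rho^\nabla$, $\omega$ and $dJdf_\xi$, this is precisely the homogeneous form of $\rho^\nabla-\omega=dJdf_\xi$, and applying $\sum_l\partial_l$ it obviously entails the scalar equation above. Such $K$ exist in abundance when $n\ge2$: writing $W:=e^{-2f_\xi}K$, the prescription $W_{ij}=\big(\partial_i\partial_j-\delta_{ij}\sum_m\partial_m^2\big)\psi$ produces, for every scalar $\psi$, a symmetric matrix field whose columns are divergence-free, and these span an infinite-dimensional space. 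I would take $\psi$ compactly supported in the interior of $\Delta$; then $K=e^{2f_\xi}W$ vanishes near $\partial\Delta$, so $H^{(t)}$ obeys the same boundary conditions as $H^{(0)}$ (those of \cite[Proposition~1]{MR2144249}) and, because $K$ is supported where $H^{(0)}$ is bounded below, $H^{(t)}$ stays positive-definite for $|t|$ small. Each $H^{(t)}$ then defines a $T$-invariant, $\omega$-compatible almost-complex structure $J_t$ of involutive type with $J_0=J$, satisfying $\rho^\nabla_t-\omega=dJ_tdf_\xi$ and hence $s^c_\xi\equiv0$.

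It remains to guarantee that $J_t$ is genuinely non-integrable (strictly almost-K\"ahler) and that the family is infinite-dimensional. The metric $g_t$ is K\"ahler iff $\partial_k G^{(t)}_{ij}=\partial_i G^{(t)}_{kj}$ for $G^{(t)}=(H^{(t)})^{-1}$, a symmetry whose first-order variation along the deformation is governed by $\dot G=-H^{-1}KH^{-1}$; I would choose $\psi$ so that this symmetry is violated somewhere, using that the linear constraints (divergence-free columns, compact support) cut out an infinite-dimensional space of admissible $\psi$ of which only a proper subspace can preserve the K\"ahler symmetry. I expect this last point to be the main obstacle: one must check that the kernel of the homogeneous GeAKRS operator really meets the open, nonlinear locus of non-integrable structures, rather than forcing the deformation to remain K\"ahler. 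The positivity and boundary-condition requirements, by contrast, are disposed of cheaply by supporting $K$ in the interior of $\Delta$.
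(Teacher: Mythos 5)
Your proof follows the paper's own argument step for step: perturb the toric data linearly as $H_t=H_0+tK$ with $K$ symmetric and compactly supported in the interior of $\Delta$, use that the relevant equations are affine in $H$, reduce the tensor equation (after one integration, legitimate by compact support) to the first-order condition that the columns of a suitable exponential rescaling of $K$ be divergence-free, exhibit an infinite-dimensional space of such $K$, and conclude by genericity that the resulting structures are non-integrable. The one substantive discrepancy is the exponential weight. The paper sets $V_{il}=D_{il}e^{+2f_\xi}$ and solves $\sum_iV_{il,i}=0$, which is the once-integrated form of
\[
\frac{1}{2}\sum_iD_{il,ik}+\sum_ia_iD_{il,k}=0,
\]
i.e.\ the linearization of $\rho^\nabla_t-\omega=dJ_tdf_\xi$ computed from the toric formula $dJdf_\xi=\sum_{i,l,k}a_iH_{il,k}\,dz_k\wedge dt_l$. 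Your condition $\sum_i\partial_i\left(e^{-2f_\xi}K_{il}\right)=0$ is instead $\frac{1}{2}\sum_iK_{il,i}-\sum_ia_iK_{il}=0$, the homogeneous form of $\rho^\nabla-\omega=-dJdf_\xi$; the two differ by $2\sum_ia_iK_{il}$, so your claim that your condition is ``precisely the homogeneous form of $\rho^\nabla-\omega=dJdf_\xi$'' does not match the paper's toric expression for $dJdf_\xi$, and you must pin down the sign convention for $J$ acting on $1$-forms before the identity in the statement can be asserted. (In your favour, your weight is the one for which the passage to the GeAKRS conclusion is immediate, since $\sum_l\partial_l$ applied to your condition is literally the double divergence $\sum_{i,j}\left(e^{-2f_\xi}K_{ij}\right)_{,ij}$ appearing in \eqref{toric-equation}; with the weight $e^{+2f_\xi}$ one has to extract $s^c_\xi\equiv 0$ from the $2$-form identity via its trace and its contraction with $\xi$, as in the proof of Proposition~\ref{positivity}. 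Resolve this sign explicitly rather than adopting either one silently.)

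The remaining ingredients are fine and in one respect cleaner than the paper's. Your ansatz $W_{ij}=\left(\partial_i\partial_j-\delta_{ij}\sum_m\partial_m^2\right)\psi$ is a manifestly infinite-dimensional source of compactly supported symmetric matrices with divergence-free columns, whereas the paper prescribes the off-diagonal entries as products of zero-mean compactly supported one-variable factors and solves for the diagonal. The point you single out as the main obstacle --- that the deformation actually leaves the K\"ahler locus --- is treated no more rigorously in the paper, which simply asserts that for generic choices the metric is not K\"ahler; your observation that the linearized integrability condition $\partial_kG^{(t)}_{ij}=\partial_iG^{(t)}_{kj}$, with $\dot G=-GKG$, imposes additional linear constraints on $K$ not implied by the divergence condition is the right way to make that genericity precise, and it would be worth carrying it out by exhibiting one explicit $\psi$ violating it.
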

	\begin{proof}
		The metric $g_0$ is of involutive type. We consider a deformation of the metric of the form $H_t=H+tD$
		and $G_t=\left(H_t\right)^{-1}$, where $D=(D_{ij}(z))$ is symmetric and has compact support in an open set $U$ of $M.$ Since $g_0$ is a K\"ahler--Ricci soliton, solving the equation $\rho^\nabla_t-\omega=dJ_td f_\xi$ gives the system 
		\begin{equation*}
		\frac{1}{2}\sum_{i=1}^nD_{il,ik}+\sum_{i=1}^na_iD_{il,k}=0,
		\end{equation*}
		for $k,l=1,\cdots,n$. Here $D_{il,ik}$ denotes $\frac{\partial^2D_{il}}{\partial z_i\partial z_k}$ etc. It is enough then to solve the system
		\begin{equation}\label{toric-equ}
		\frac{1}{2}\sum_{i=1}^nD_{il,i}+\sum_{i=1}^na_iD_{il}=0,
		\end{equation}
		for $l=1,\cdots,n$.
		We define $V_{il}:=D_{il}\,e^{2f_\xi}.$ Remark that if $V_{il}$ is of compact support in $U$ then $D_{il}$ is of compact support in $U$. Equation~(\ref{toric-equ}) is then equivalent to
		\begin{equation*}
		\sum_{i=1}^nV_{il,i}=0,
		\end{equation*}
		for $l=1,\cdots,n$.
		We remark that the function $V_{ll}$, for a fixed $l$ is determined by the functions $V_{il}$, with $i=1,\cdots,n$ and $i\neq l $. For $1\leq i<l\leq n$, we can then choose functions $V_{il}$ of the form $V_{il}=u_{il}(z_i)v_{il}(z_l)w_{il}(z_1,\cdots,\hat{z_i},\cdots,\hat{z_l},\cdots,z_n),$ (here $\hat{z_i}$ means $z_i$ is removed etc) such that the functions $u_{il}(z_i),v_{il}(z_l)$ and $w_{il}(z_1,\cdots,\hat{z_i},\cdots,\hat{z_l},\cdots,z_n)$ have compact support in $U$ with $\int_Uu_{il}(z_i)\left(\omega|_U\right)^n=\int_Uv_{il}(z_i)\left(\omega|_U\right)^n=0$ so that $u_{il}(z_i)$ and $v_{il}(z_l)$ admit primitives with compact support in $U$. For generic choices of such functions, the metric is not K\"ahler.
%		\begin{equation*}
%			\begin{cases}
%				a_1u_{11}+\frac{1}{2}u_{11}^\prime=c_1 u_{12},\\
%				a_2 u_{12}+\frac{1}{2}u_{12}^\prime=c_2 u_{22},\\
%				a_1 v_{12}+\frac{1}{2}v_{12}^\prime=-c_1 v_{11},\\
%				a_2 v_{22}+\frac{1}{2}v_{22}^\prime=-c_2 v_{12},
%			\end{cases}
%		\end{equation*}
%		where $c_1,c_2$ are non-zero constants. We can choose arbitrary functions $u_{11},v_{22}$ with compact support in $U$ and the above system will determine completely functions $u_{12},u_{22},v_{11},v_{12}$ with compact support. For a sufficiently small $t$, $H_t=H+tD$ is positive definite. It is easy then to check that for generic choices of $u_{11}$ and $v_{22}$, the metric $g_t$ is non-K\"ahler. 
	\end{proof}
	We can then deduce the following:
	\begin{cor}\label{existence-toric-dim4}
Any $2n$-dimensional compact toric symplectic Fano manifold admits a GeAKRS which is strictly non-K\"ahler.
	\end{cor}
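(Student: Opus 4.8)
The statement follows by combining the existence of toric K\"ahler--Ricci solitons with the deformation result of Lemma~\ref{toric-deformations}. The plan is to first reduce to the case of a single toric symplectic Fano manifold equipped with a K\"ahler--Ricci soliton, and then to verify that the family of deformed metrics produced in Lemma~\ref{toric-deformations} can be chosen to be strictly almost-K\"ahler (i.e. genuinely non-integrable) rather than merely almost-K\"ahler. Concretely, I would proceed as follows.

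First, recall that by Wang--Zhu~\cite{MR2084775} any compact toric K\"ahler Fano manifold admits a K\"ahler--Ricci soliton, which is automatically $T$-invariant and of involutive type since it is K\"ahler. Thus for a given $2n$-dimensional compact toric symplectic Fano manifold $(M,\omega)$ we may fix a compatible $J_0$ whose induced metric $g_0$ is a K\"ahler--Ricci soliton with respect to some $\xi\in\mathfrak{t}$. This places us exactly in the hypotheses of Lemma~\ref{toric-deformations}.

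Next, I would invoke Lemma~\ref{toric-deformations} directly: it produces, for small $t$, an infinite-dimensional family of metrics $g_t$ (with associated $J_t$) satisfying $\rho^\nabla_t-\omega=dJ_t df_\xi$, and hence each $g_t$ is a GeAKRS. The deformation is built from a compactly supported symmetric matrix $D=(D_{ij})$ solving the linear system~(\ref{toric-equ}); equivalently, setting $V_{il}=D_{il}e^{2f_\xi}$, one solves $\sum_i V_{il,i}=0$ for each $l$, which admits solutions of the separated-variable product form exhibited in the proof. The essential remaining point is the final sentence of that proof, namely that \emph{for generic choices of such functions the metric is not K\"ahler}: the K\"ahler condition is $\tfrac{\partial G_{ij}}{\partial z_k}=\tfrac{\partial G_{kj}}{\partial z_i}$, which for the perturbed $G_t=(H+tD)^{-1}$ fails to first order in $t$ precisely when $D$ violates the analogous symmetry $D_{ij,k}=D_{kj,i}$. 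Since the separated product functions $V_{il}$ (and hence $D_{il}$) can be chosen to break this symmetry while still solving~(\ref{toric-equ}) and retaining compact support, for a generic such choice $J_t$ is non-integrable for all small $t\neq 0$, so $g_t$ is strictly almost-K\"ahler.

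The main obstacle I anticipate is verifying that genericity and non-integrability are \emph{compatible} with the constraint system~(\ref{toric-equ}): one must exhibit at least one admissible $D$ for which the failure of the K\"ahler symmetry is nonzero, so that it persists under small perturbation, while simultaneously respecting the divergence-type equations and the compact-support and boundary conditions ensuring $H_t$ remains globally a genuine almost-K\"ahler metric on all of $M$. Since~(\ref{toric-equ}) is a strict subsystem of the full K\"ahler symmetry condition (it only constrains $\sum_i D_{il,i}$, not the individual mixed derivatives $D_{ij,k}$), there is ample freedom in the off-diagonal product solutions $V_{il}=u_{il}(z_i)v_{il}(z_l)w_{il}$ to arrange $D_{ij,k}\neq D_{kj,i}$ at some interior point; this is where the dimension hypothesis $n\ge 1$ already suffices, and the compact support in the open set $U$ guarantees $H_t=H+tD$ stays positive-definite and satisfies the boundary conditions for small $t$. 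Once such a $D$ is fixed, the conclusion that $g_t$ is GeAKRS and strictly almost-K\"ahler for small $t\neq0$ is immediate from Lemma~\ref{toric-deformations}, completing the proof.
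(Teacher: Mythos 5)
Your proposal is correct and follows essentially the same route as the paper: combine the Wang--Zhu existence of K\"ahler--Ricci solitons on toric K\"ahler Fano manifolds with Lemma~\ref{toric-deformations}, using that a compact toric symplectic manifold admits a compatible toric K\"ahler structure (Delzant), a fact you invoke implicitly when fixing $J_0$ and which the paper cites explicitly. Your extra discussion of why generic choices of $D$ break the K\"ahler symmetry $G_{ij,k}=G_{kj,i}$ is just an expansion of the final sentence of the paper's proof of Lemma~\ref{toric-deformations} and is consistent with it.
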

	\begin{proof}
		This is a consequence of Lemma~\ref{toric-deformations}, the fact that any toric symplectic manifold is K\"ahler~\cite{MR984900}, and the existence of K\"ahler--Ricci soliton on any toric K\"ahler Fano manifold~\cite{MR2084775}.
	\end{proof}

  \section{Holomorphic vector fields on GeAKRS manifolds}\label{sec-lie-algebra}
  
  A structure theorem for the automorphism group on K\"ahler manifolds admitting K\"ahler--Ricci solitons is given in~\cite{MR1768112}. In particular,
  it is known that Killing vector fields are Hamiltonian on such manifolds. We extend some of these results to symplectic Fano GeAKRS manifolds of real dimension $2n.$ We recall that on an almost-K\"ahler manifold $(M,\omega,J)$, a (real) vector field $X$ is called holomorphic if it preserves the almost-complex structure $J$ i.e. $\mathcal{L}_XJ=0$. This is equivalent to $\bar{\partial}^{TM}X-N(X,\cdot)=0,$ where $\bar{\partial}^{TM}$ is the Cauchy--Riemann operator defined on the tangent bundle and $N$ is the Nijenhuis tensor of $J$. We deduce that on a compact almost-K\"ahler manifold, the Lie algebra of holomorphic vector fields is finite-dimensional~\cite[Theorem 4.1]{MR1336823}. Moreover, since $\omega$ is $\Delta^g$-harmonic, any Killing vector field is holomorphic.
 
% Matsushima theorem~\cite{MR0094478} is known to be an obstruction to the existence of K\"ahler--Einstein on a compact K\"ahler manifold by giving the decomposition of the Lie algebra of holomorphic vector fields. In particular, Killing vector fields are Hamiltonian on a compact K\"ahler--Einstein manifold.  

\begin{thm}\label{killing-algebra}
Let $(M,\omega,J,g)$ be a compact symplectic Fano GeAKRS manifold of real dimension $2n > 2$ such that $\rho^\nabla-\omega=-dd^cf_\xi.$ Let $X$ be holomorphic vector field. Then the Riemannian dual of the vector field $X$ is of the form$$\alpha=d^cu+dh,$$ where $u,h$ are functions normalized such that $\int_Mue^{-2f_\xi}\omega^n=\int_Mhe^{-2f_\xi}\omega^n=0$, and furthermore satisfy
 \begin{eqnarray}
 \frac{1}{2}\Delta^gu& =&u-g\left(\alpha,d^cf_\xi\right),\label{result-1}\\
 \frac{1}{2}\Delta^g h&=&h-g(J\alpha,d^cf_\xi),\label{result-2}\\
 D^g_{(dh)^\sharp}\omega&=&0,\label{result-3}
\end{eqnarray}
where $\Delta^{{g}}$ is the Riemannian Laplacian of the metric $g$ and $\sharp$ is the $g$-Riemannian dual. Furthermore, if $X$ is a Killing vector field, then $X$ is a Hamiltonian vector field such that the Killing potential $u$ satisfies 
 \begin{equation}\label{killing-potential-conf-eq}
\Delta^{\tilde{g}}u=2u,
 \end{equation}
 where $\Delta^{\tilde{g}}$ is the Riemannian Laplacian of the conformal metric $\tilde{g}=e^{\frac{-2f_\xi}{n-1}}g.$ In particular, there are no non-trivial Killing vector fields if $2$ is not an eigenvalue of $\Delta^{\tilde{g}}.$
 \end{thm}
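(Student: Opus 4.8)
The plan is to start from the defining condition $\mathcal{L}_X J = 0$ for a holomorphic vector field and convert it into information about the Riemannian dual $1$-form $\alpha = g(X,\cdot)$. First I would decompose $\alpha$ via the twisted Hodge decomposition associated to the appropriate weighted Laplacian, writing $\alpha = d^c u + dh$ plus a harmonic piece; the strategy is to show the harmonic part vanishes. The natural tool is Lemma~\ref{lichnerowicz-AK} together with the soliton identity $\rho^\nabla - \omega = -dd^c f_\xi$, which lets me commute the Chern--Ricci form with the Laplacian acting on potentials. The vanishing of the harmonic part should follow because, on a symplectic Fano manifold, a $d^c$- or $d$-closed-and-coclosed $1$-form that is also constrained by holomorphicity must be trivial once one integrates against the weight $e^{-2f_\xi}\omega^n$.

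\textbf{Deriving the two elliptic equations.} Next I would extract Equations~(\ref{result-1}) and~(\ref{result-2}). The idea is that $\mathcal{L}_X J = 0$ forces $\grad_\omega u = (d^c u)^\sharp$ and the complementary field $(dh)^\sharp$ to each be holomorphic (or at least real-holomorphic up to the Nijenhuis correction). Applying the almost-K\"ahler Lichnerowicz formula from Lemma~\ref{lichnerowicz-AK} to $u$ and to $h$ separately, and substituting $\rho^\nabla = \omega - dd^c f_\xi$, should turn the statement ``$-\tfrac12 d\Delta^g u = \rho^\nabla((d^c u)^\sharp,\cdot)$'' into $\tfrac12 \Delta^g u = u - g(\alpha, d^c f_\xi)$ after integrating the resulting exact $1$-form equation and fixing the constant by the normalization $\int_M u\,e^{-2f_\xi}\omega^n = 0$. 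The same computation with $J\alpha$ in place of $\alpha$ yields~(\ref{result-2}); Equation~(\ref{result-3}), $D^g_{(dh)^\sharp}\omega = 0$, I expect to come from the fact that the $dh$-piece corresponds to the part of $X$ generating genuine symplectic (not merely holomorphic) flow, so its covariant derivative preserves $\omega$.

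\textbf{The Killing case.} For the final assertion I would assume $X$ is Killing. A Killing field preserves $g$ and $\omega$, hence $\alpha = g(X,\cdot)$ is coclosed and the flow is by symplectomorphisms, so $X$ is Hamiltonian: concretely, the $dh$-component must drop out (since a Killing field dual has no nontrivial exact-$d$ part once $h$ is forced to be constant by~(\ref{result-3}) and the weighted normalization), leaving $\alpha = d^c u$ with $u$ the Killing potential. Then Equation~(\ref{result-1}) reads $\tfrac12 \Delta^g u = u - g(d^c u, d^c f_\xi)$, and the gradient term $g(d^c u, d^c f_\xi)$ is precisely the first-order term that gets absorbed by passing to the conformal metric $\tilde g = e^{-\frac{2f_\xi}{n-1}}g$. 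The plan is to compute $\Delta^{\tilde g} u$ in terms of $\Delta^g u$ and $g(du, df_\xi)$ using the standard conformal transformation law for the Laplacian in dimension $2n$, and verify that the weight exponent $-\tfrac{2f_\xi}{n-1}$ is chosen exactly so that the lower-order drift cancels, yielding $\Delta^{\tilde g} u = 2u$. The eigenvalue statement is then immediate.

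\textbf{Main obstacle.} The hardest step will be establishing Equations~(\ref{result-1})--(\ref{result-3}) in the genuinely non-integrable setting: in the K\"ahler case holomorphicity of $\grad_\omega u$ is clean, but here $\mathcal{L}_X J = 0$ only gives $\bar\partial^{TM} X = N(X,\cdot)$, so the Nijenhuis tensor contributes correction terms that must be shown either to vanish under contraction with $\rho^\nabla$ or to be reabsorbed. Controlling these Nijenhuis corrections---and in particular verifying that the decomposition $\alpha = d^c u + dh$ is compatible with the twisted operators rather than producing extra terms---is where the almost-K\"ahler Lichnerowicz formula of Lemma~\ref{lichnerowicz-AK} does the essential work, and I would expect the bulk of the technical effort to concentrate there.
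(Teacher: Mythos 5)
Your plan has two genuine gaps that the paper's proof is specifically built to avoid. First, you take the decomposition $\alpha = d^cu + dh + (\text{harmonic})$ as a starting point, but this is not a Hodge decomposition you are entitled to: with respect to the twisted Laplacian $\Delta^c = J\Delta^g J^{-1}$ a $1$-form decomposes as $\alpha = \alpha_{H^c} + d^cu + \delta^c\tau$ with $\tau$ a $2$-form, and there is no a priori reason for the co-exact piece to be $d$-exact. That this piece equals $dh$ is one of the \emph{conclusions} of the theorem: the paper writes it as $-J\mathbb{G}d^cv$, and only at the end identifies $-J\mathbb{G}d^cv = \tfrac12\,dv + d\left(g(J\alpha,d^cf_\xi)\right) = dh$ with $v=\Delta^gh$, as an output of the Bochner-type identity $\delta^g\left(D^g\alpha\right)^{J,+}-\delta^g\left(D^g\alpha\right)^{J,-}=\rho^\nabla(X,J\cdot)$ valid for the holomorphic field $X$. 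Second, applying Lemma~\ref{lichnerowicz-AK} to $u$ and $h$ separately presupposes that $(d^cu)^\sharp$ and $(dh)^\sharp$ are each (real) holomorphic, which is neither justified nor true in general; and in the strictly almost-K\"ahler setting the implication ``$\operatorname{grad}_\omega u$ holomorphic $\Rightarrow -\tfrac12 d\Delta^gu = \rho^\nabla(\operatorname{grad}_\omega u,\cdot)$'' fails because of the symmetric $J$-anti-invariant correction terms --- this is precisely the content of Remarks~\ref{remark1} and~\ref{remark2}. You correctly flag these corrections as the main obstacle, but you propose no mechanism for absorbing them. The paper's mechanism is to work with the full $1$-form $\alpha$ rather than its components, combining the displayed Bochner identity with the observation that, for holomorphic $X$, $\left(D^g\alpha\right)^{J,-}=-\tfrac12 D^g_{JX}J$ is anti-symmetric, so $\left(D^g\alpha\right)^{sym}$ is $J$-invariant and equals $-\tfrac12\left(d\mathbb{G}d^cv\right)_{J\cdot,\cdot}$; this is what makes all correction terms computable.

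A smaller but still real error occurs in the Killing case: you argue that $h$ is constant ``by~(\ref{result-3}) and the weighted normalization.'' Equation~(\ref{result-3}) does not force $h$ to be constant. The correct argument is that a Killing field on an almost-K\"ahler manifold preserves $\omega$, so $0=\mathcal{L}_X\omega = dJ\alpha = dd^ch$, and on a compact manifold $dd^ch=0$ implies $dh=0$ (integrate $h\,dd^ch\wedge\omega^{n-1}$); the normalization then gives $h\equiv0$ and $\alpha=d^cu$, after which the conformal rewriting of~(\ref{result-1}) proceeds essentially as you describe.
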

 \begin{proof}
 % Since $\omega$ is $\Delta^g$-harmonic, the Killing vector $X$ preserves $\omega$ and so $X$ is a real holomorphic vector field (i.e. $\mathcal{L}_XJ=0$, where $\mathcal{L}$ is the Lie derivative). Hence, 
 The Riemannian dual $\alpha$ of $X$ admits the following Hodge decomposition with respect to the twisted Laplacian $\Delta^c=J\Delta^gJ^{-1}$~\cite[Corollary 1]{MR3331165}
 $$\alpha=\alpha_{H^c}+d^cu-J\mathbb{G}d^cv,$$
 for some functions $u,v$ (defined up to a constant), where $(\cdot)_{H^c}$ is the harmonic part with respect to $\Delta^c$ and $\mathbb{G}$ is the Green operator associated to $\Delta^g.$ Now, the vector field $X$ preserves $J$ so $\mathcal{L}_X\omega=dJ\alpha=-dJJ\mathbb{G}d^cv=d\mathbb{G}d^cv$ is $J$-invariant.
%  We get that
% $0=d\mathbb{G}d^cv=\mathbb{G}dd^cv$ and so $dd^cv=0$. Taking the trace and since $M$ is compact, $v\equiv 0.$ In particular,
% $$\alpha=\left(\alpha\right)_{H^c}+d^cu.$$
Moreover, since $X$ is a real holomorphic vector field, we see that ~\cite[Corollary 2.4]{MR2747965}
 \begin{equation}\label{bochner}
 \delta^g\left(D^g\alpha\right)^{J,+}-\delta^g\left(D^g\alpha\right)^{J,-}=\rho^\nabla\left(X,J\cdot\right),
 \end{equation}
 where $D^g$ is the Levi-Civita connection of $g$ and $(\cdot)^{J,+}$ (resp. $(\cdot)^{J,-}$) denotes the $J$-invariant (resp. the $J$-anti-invariant) part of the $2$-tensor. Furthermore, since $X$ is holomorphic we know that $\left(D^g\alpha\right)^{J,-}=-\frac{1}{2}D^g_{JX}J$ is anti-symmetric~\cite{MR3331165} so the $g$-symmetric part $\left(D^g\alpha\right)^{sym}$ of $D^g\alpha$ is $J$-invariant and it is given by $\left(D^g\alpha\right)^{sym}=-\frac{1}{2}(dJ\alpha)_{J\cdot,\cdot}=-\frac{1}{2}(d\mathbb{G}d^cv)_{J\cdot,\cdot}.$

  Denote by $(\cdot)^{sym}$ the $g$-symmetric part. Then,  
  %(resp. the $g$-anti-symmetric part). 
%   and by $(\cdot)^{g,+}$ (resp. $(\cdot)^{g,-}$) the $g$-self-dual (resp. the $g$-anti-self-dual) part with respect to the Riemannian Hodge operator $\ast_g$. 
%   Since we are in real dimension $4$, and using $\delta^g=-\ast_gd\ast_g$, the left-hand side of 
 Equation~(\ref{bochner}) reduces to
 \begin{eqnarray*}
  \delta^g\left(D^g\alpha\right)^{J,+}-\delta^g\left(D^g\alpha\right)^{J,-}&=& \delta^g\left(D^g\alpha\right)^{sym}+\frac{1}{2}\delta^g\left(\left(d\alpha\right)^{J,+}-\left(d\alpha\right)^{J,-}\right),\\
  &=& \delta^g\left(D^g\alpha\right)^{sym}+\frac{1}{2}\delta^g\left(J\left(d\alpha\right)^{J,+}+J\left(d\alpha\right)^{J,-}\right),\\
  &=& \delta^g\left(D^g\alpha\right)^{sym}+\frac{1}{2}\delta^gJ\left(d\alpha\right),\\
  &=& \delta^g\left(D^g\alpha\right)^{sym}-\frac{1}{2}J\delta^cd\alpha,\\
%  &=& \delta^g\left(D^g\alpha\right)^{sym}+\frac{1}{2}\left(\ast_gd \left(\left(d\alpha\right)^{g,-}+\left(d\alpha\right)^{g,+}\right)+\delta^g\left(g(d\alpha,\omega)\omega\right)\right),\\
%  &=& \delta^g\left(D^g\alpha\right)^{sym}+\frac{1}{2}\left(\ast_gd \left(d\alpha\right)+\delta^g\left(g(d\alpha,\omega)\omega\right)\right),\\
%  &=& \delta^g\left(D^g\alpha\right)^{sym}+\frac{1}{2}\delta^g\left(g(d\alpha,\omega)\omega\right),\\
  &=& \delta^g\left(D^g\alpha\right)^{sym}+\frac{1}{2}Jd\delta^c\alpha,\\
   &=& \delta^g\left(D^g\alpha\right)^{sym}+\frac{1}{2}Jd\left(\Delta^gu\right),\\
   &=& \delta^g\left(D^g\alpha\right)^{sym}+\frac{1}{2}d^c\Delta^gu.
   \end{eqnarray*}
   On the other hand, the right-hand side of Equation~(\ref{bochner}) is given by
   \begin{eqnarray*}
   \rho^\nabla\left(X,J\cdot\right)&=&\omega(X,J)-dd^cf_\xi\left(X,J\cdot\right),\\
   &=&\alpha+J\mathcal{L}_Xd^cf_\xi-d^c\left(g(\alpha,d^cf_\xi)\right),\\
   &=&\alpha-d\mathcal{L}_Xf_\xi-d^c\left(g(\alpha,d^cf_\xi)\right),\\
   &=&\alpha-d\left(g(J\alpha,d^cf_\xi)\right)-d^c\left(g(\alpha,d^cf_\xi)\right).
   \end{eqnarray*}
   where $\mathcal{L}$ is the Lie derivative. Hence, using the fact that $\left(D^g\alpha\right)^{sym}=-\frac{1}{2}(d\mathbb{G}d^cv)_{J\cdot,\cdot}$ is $J$-invariant, Equation~(\ref{bochner}) becomes
   \begin{eqnarray}
   \delta^g\left(D^g\alpha\right)^{sym}+\frac{1}{2}d^c\Delta^gu&=&\alpha-d\left(g(J\alpha,d^cf_\xi)\right)-d^c\left(g(\alpha,d^cf_\xi)\right),\nonumber\\
  -\frac{1}{2} \delta^g\left(\left(d\mathbb{G}d^cv\right)_{J\cdot,\cdot}\right)+\frac{1}{2}d^c\Delta^gu&=&\alpha-d\left(g(J\alpha,d^cf_\xi)\right)-d^c\left(g(\alpha,d^cf_\xi)\right),\nonumber\\
   -\frac{1}{2}J \delta^g\left(d\mathbb{G}d^cv\right)+\frac{1}{2}d^c\Delta^gu&=&\alpha-d\left(g(J\alpha,d^cf_\xi)\right)-d^c\left(g(\alpha,d^cf_\xi)\right),\nonumber\\
    -\frac{1}{2}J\Delta^g \mathbb{G}\left(d^cv\right)+\frac{1}{2}d^c\Delta^gu&=&\alpha-d\left(g(J\alpha,d^cf_\xi)\right)-d^c\left(g(\alpha,d^cf_\xi)\right),\nonumber\\
    -\frac{1}{2}J d^cv+\frac{1}{2}d^c\Delta^gu&=&\alpha-d\left(g(J\alpha,d^cf_\xi)\right)-d^c\left(g(\alpha,d^cf_\xi)\right),\nonumber\\
     \frac{1}{2} dv+\frac{1}{2}d^c\Delta^gu&=&\alpha-d\left(g(J\alpha,d^cf_\xi)\right)-d^c\left(g(\alpha,d^cf_\xi)\right).\label{bochner-equation}
  \end{eqnarray}
  
 Since, we have $-J\mathbb{G}d^cv=J\delta^g\mathbb{G}(v\omega)$ and $\delta^g\left(\alpha_{H^c}\right)=0$, we deduce that
  $\alpha_{H^c}=0$ and so $$\alpha=d^cu-J\mathbb{G}d^cv.$$ Moreover, we apply $d^c$ and $d$ to obtain
  \begin{eqnarray*}
   \frac{1}{2} d^cdv&=&-d\mathbb{G}d^cv-d^cd\left(g(J\alpha,d^cf_\xi)\right),\\
    \frac{1}{2} dd^c\Delta^gu&=&dd^cu-dJ\mathbb{G}d^cv-dd^c\left(g(\alpha,d^cf_\xi)\right).
   \end{eqnarray*}
  We take the trace of the two equations with respect to $\omega$. Because $d\mathbb{G}d^cv$ is $J$-invariant, we have $\Lambda d\mathbb{G}d^cv=-v+c$
  for some constant $c$ (see for instance~\cite{MR3331165}). Moreover, $\Lambda d J\mathbb{G}d^cv=-\delta^c J\mathbb{G}d^cv=-\delta^c J\delta^g\mathbb{G}(v\omega)=0$. We deduce that
  
  \begin{eqnarray}
   \frac{1}{2}\Delta^gu&=&u-g\left(\alpha,d^cf_\xi\right)+c_1\label{GeAKRS eq_u},\\
    \frac{1}{2}\Delta^g v&=&v+c_2-\Delta^g\left(g(J\alpha,d^cf_\xi)\right)\label{GeAKRS eq_v},
  \end{eqnarray}
  for some constants $c_1,c_2.$ We normalize $u$ so that $\int_M u\,e^{-2f_\xi}\,\omega^n=0.$ We multiply Equation~(\ref{GeAKRS eq_u}) by $e^{-2f_\xi}$ and we integrate. We get
%  \begin{eqnarray*}
% \frac{1}{2}\int_M ve^{-2f_\xi}\omega^2+\int_M\left(g(J\alpha,d^cf_\xi)\right)e^{-2f_\xi}\omega^2&=&c_1\int_Me^{-2f_\xi}\omega^2,\\
% \frac{1}{2}\int_M ve^{-2f_\xi}\omega^2-\frac{1}{2}\int_M\left(g(\alpha,d(e^{-2f_\xi})\right)\omega^2&=&c_1\int_Me^{-2f_\xi}\omega^2,\\
%   \frac{1}{2}\int_M ve^{-2f_\xi}\omega^2+\frac{1}{2}\int_M\left(g(J\mathbb{G}d^cv,d(e^{-2f_\xi})\right)\omega^2&=&c_1\int_Me^{-2f_\xi}\omega^2,\\
%      \frac{1}{2}\int_M ve^{-2f_\xi}\omega^2+\frac{1}{2}\int_M\left(g(\delta^gJ\mathbb{G}d^cv,e^{-2f_\xi}\right)\omega^2&=&c_1\int_Me^{-2f_\xi}\omega^2,\\
%       \frac{1}{2}\int_M ve^{-2f_\xi}\omega^2+\frac{1}{2}\int_M\left(g(\Lambda d^c J\mathbb{G}d^cv,e^{-2f_\xi}\right)\omega^2&=&c_1\int_Me^{-2f_\xi}\omega^2,\\
%         \frac{1}{2}\int_M ve^{-2f_\xi}\omega^2+\frac{1}{2}\int_M\left(g(\Lambda d \mathbb{G}d^cv,e^{-2f_\xi}\right)\omega^2&=&c_1\int_Me^{-2f_\xi}\omega^2,\\
%          \frac{1}{2}\int_M ve^{-2f_\xi}\omega^2+\frac{1}{2}\int_M-ve^{-2f_\xi}\omega^2&=&c_1\int_Me^{-2f_\xi}\omega^2,\\
%  0&=&c_2\int_Me^{-2f_\xi}\omega^2.
% % -\int_M u\left(\Delta^gf_\xi+2|df_\xi|^2_g\right)e^{-2f_\xi}\omega^2+\int_Mu\left(\delta^gJa+2g(Ja,df_\xi)\right)e^{-2f_\xi}\omega^2&=&c\int_Me^{-2f_\xi}\omega^2.
%  \end{eqnarray*}

 \begin{eqnarray*}
 \frac{1}{2}\int_M \Delta^gue^{-2f_\xi}\omega^n+\int_Mg\left(\alpha,d^cf_\xi\right)e^{-2f_\xi}\omega^n&=&c_1\int_Me^{-2f_\xi}\omega^n,\\
  \frac{1}{2}\int_M \Delta^gue^{-2f_\xi}\omega^n+\int_Mg\left(d^cu,d^cf_\xi\right)e^{-2f_\xi}\omega^n&=&c_1\int_Me^{-2f_\xi}\omega^n,\\
  0&=&c_1\int_Me^{-2f_\xi}\omega^n.
 % -\int_M u\left(\Delta^gf_\xi+2|df_\xi|^2_g\right)e^{-2f_\xi}\omega^2+\int_Mu\left(\delta^gJa+2g(Ja,df_\xi)\right)e^{-2f_\xi}\omega^2&=&c\int_Me^{-2f_\xi}\omega^2.
  \end{eqnarray*}
  We conclude that $c_1=0$ and we obtain~(\ref{result-1}). On the other hand, we suppose that $\int_Mv\,\omega^2=0$ and we introduce
  the function $h$ satisfying $v=\Delta^gh.$ We normalize $h$ so that $\int_M h\,e^{-2f_\xi}\,\omega^2=0.$ Equation~(\ref{GeAKRS eq_v}) becomes 
$$ \frac{1}{2}\Delta^g h=h-g(J\alpha,d^cf_\xi)+c_3,$$
 for some constant $c_3.$ We multiply by $e^{-2f_\xi}$ and integrate. We get
  \begin{eqnarray*}
 \frac{1}{2}\int_M \Delta^ghe^{-2f_\xi}\omega^n+\int_Mg\left(J\alpha,d^cf_\xi\right)e^{-2f_\xi}\omega^n&=&c_3\int_Me^{-2f_\xi}\omega^n,\\
 \frac{1}{2}\int_M \Delta^ghe^{-2f_\xi}\omega^n-\frac{1}{2}\int_Mg\left(\alpha,d(e^{-2f_\xi})\right)\omega^n&=&c_3\int_Me^{-2f_\xi}\omega^n,\\
   \frac{1}{2}\int_M \Delta^ghe^{-2f_\xi}\omega^n+\frac{1}{2}\int_Mg\left(J\mathbb{G}d^cv,d(e^{-2f_\xi})\right)\omega^n&=&c_3\int_Me^{-2f_\xi}\omega^n,\\
      \frac{1}{2}\int_M \Delta^ghe^{-2f_\xi}\omega^n+\frac{1}{2}\int_Mg\left(\delta^gJ\mathbb{G}d^cv,e^{-2f_\xi}\right)\omega^n&=&c_3\int_Me^{-2f_\xi}\omega^n,\\
       \frac{1}{2}\int_M \Delta^ghe^{-2f_\xi}\omega^n+\frac{1}{2}\int_Mg\left(\Lambda d^c J\mathbb{G}d^cv,e^{-2f_\xi}\right)\omega^n&=&c_3\int_Me^{-2f_\xi}\omega^n,\\
         \frac{1}{2}\int_M  \Delta^ghe^{-2f_\xi}\omega^n+\frac{1}{2}\int_Mg\left(\Lambda d \mathbb{G}d^cv,e^{-2f_\xi}\right)\omega^n&=&c_3\int_Me^{-2f_\xi}\omega^n,\\
          \frac{1}{2}\int_M  \Delta^ghe^{-2f_\xi}\omega^n-\frac{1}{2}\int_Mve^{-2f_\xi}\omega^n&=&c_3\int_Me^{-2f_\xi}\omega^n,\\
           \frac{1}{2}\int_M  \Delta^ghe^{-2f_\xi}\omega^n-\frac{1}{2}\int_M \Delta^ghe^{-2f_\xi}\omega^n&=&c_3\int_Me^{-2f_\xi}\omega^n,\\
  0&=&c_3\int_Me^{-2f_\xi}\omega^n.
% % -\int_M u\left(\Delta^gf_\xi+2|df_\xi|^2_g\right)e^{-2f_\xi}\omega^2+\int_Mu\left(\delta^gJa+2g(Ja,df_\xi)\right)e^{-2f_\xi}\omega^2&=&c\int_Me^{-2f_\xi}\omega^2.
  \end{eqnarray*}
  We deduce that $c_3=0$ and so $$ \frac{1}{2}\Delta^g h=h-g(J\alpha,d^cf_\xi),$$
  where $\Delta^gh=v$ and we get~(\ref{result-2}). From Equation~(\ref{bochner-equation}), we obtain that
  \begin{eqnarray*}
  -J\mathbb{G}d^cv&=&  \frac{1}{2} dv+d\left(g(J\alpha,d^cf_\xi)\right),\\
  &=&  \frac{1}{2} d\Delta^gh+d\left(g(J\alpha,d^cf_\xi)\right),\\
  &=&dh.
  \end{eqnarray*}
  We conclude then that $$\alpha=d^cu+dh.$$
  Furthermore, since $\left(D^g\alpha\right)^{J,-}=-\frac{1}{2}D^g_{JX}J$ is anti-symmetric, we have
  $\left(d\alpha\right)^{J,-}=-D^g_{JX}\omega$. On the other hand $d\alpha=dd^cu$ and so $\left(d\alpha\right)^{J,-}=\left(dd^cu\right)^{J,-}=D^g_{(du)^\sharp}\omega$. We deduce that
  $D^g_{\left(dh\right)^\sharp}\omega=0.$ Finally, if $X$ is a Killing vector field then $\mathcal{L}_X\omega=dd^ch=0$ and so $h\equiv 0.$ Equation~(\ref{result-1}) is then equivalent to $\Delta^{\tilde{g}}u=2u.$
 
%  Since the metric is GeAKRS, we have that $\Delta^gf_\xi+2|df_\xi|_g^2=\delta^gJa+2g(Ja,df_\xi)$ (see Equation~(\ref{conformal-equation})).
%  We conclude that the constant $c=0$ and the theorem follows.
 \end{proof}
 
 \begin{rem}\label{remark1}
We observe that in the K\"ahler case, a converse to Theorem~(\ref{killing-algebra}) holds (in any dimension). More explicitly, if we have a K\"ahler-Ricci soliton and functions $u,h$ satisfying Equations~(\ref{result-1}) and~(\ref{result-2}) (in the K\"ahler case, Equation~(\ref{result-3}) is automatically satisfied) then the vector field $X$ with Riemannian dual $\alpha=d^cu+dh$ is a holomorphic vector field (see~\cite{MR1768112}) and so one can use Equations~(\ref{result-1}) and~(\ref{result-2}) to deduce a structure theorem of the Lie algebra of holomorphic vector fields~~\cite{MR1768112}. However, in the strictly almost-K\"ahler case, the assumption that $$\int_M\left(D^gX\right)^{sym,J,-}\left(e_i,\left(D^g_{e_i}J\right)JX)\right)) \,e^{2f_\xi}\omega^n=0,$$
is needed to prove the converse, where $\left(D^gX\right)^{sym,J,-}$ is the $g$-symmetric $J$
-anti-invariant part of of $D^gX.$ The above integral clearly vanishes in the K\"ahler case.
\end{rem}

In the toric case, we can prove a converse to Theorem~\ref{killing-algebra} in the almost-K\"ahler case (see for instance~\cite{MR3833798} in the K\"ahler--Einstein case)
\begin{prop}
Suppose that $(M,\omega,J,g)$ is a compact toric almost-K\"ahler manifold of real dimension $2n > 2$ such that the metric $g$ is $T$-invariant and of involutive type. Let $z:M\longrightarrow \Delta\subset \mathfrak{t}^\ast$ be the moment map, where $\Delta$ is the Delzant polytope. Suppose the moment coordinate $z_i$ satisfies $$\Delta^{\tilde{g}}z_i=2z_i,$$
for each $i=1,\cdots,n$, where $\tilde{g}$ is the conformal metric $\tilde{g}=e^{-\frac{2f_\xi}{n-1}}g$ with $f_\xi=a_1z_1+a_2z_2+\cdots+a_nz_n+a_{n+1},$ for some real numbers $a_i$. Then $$\rho^\nabla-\omega=-dd^cf_\xi,$$
so the almost-K\"ahler metric $g$ is a GeAKRS.
 \end{prop}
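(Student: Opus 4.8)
The plan is to convert both the hypothesis and the target into explicit scalar equations on the matrix-valued function $H=(H_{ij})$ in the toric coordinates $(z,t)$, and then to observe that one differentiation of the former produces the latter. Since $g$ is $T$-invariant and of involutive type, I would work with the local expressions recalled in Section~\ref{toric-section}, namely $\omega=\sum_i dz_i\wedge dt_i$, $\rho^\nabla=-\tfrac12\sum_{i,k,l}H_{li,ik}\,dz_k\wedge dt_l$ and $dd^cf_\xi=\sum_{i,k,l}a_iH_{il,k}\,dz_k\wedge dt_l$, together with $\Delta^g z_l=-\sum_i H_{li,i}$, $g(df_\xi,dz_l)=\sum_i a_iH_{il}$ and $f_\xi=\sum_i a_iz_i+a_{n+1}$. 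Comparing the coefficient of each $dz_k\wedge dt_l$, the desired identity $\rho^\nabla-\omega=-dd^cf_\xi$ is equivalent to the second-order system
\[
\sum_i H_{li,ik}-2\sum_i a_iH_{il,k}=-2\delta_{kl},\qquad 1\le k,l\le n.
\]

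Next I would unwind the hypothesis. Setting $W_l:=\Delta^g z_l+2\,g(df_\xi,dz_l)=-\sum_i H_{li,i}+2\sum_i a_iH_{il}$, the conformal reduction already carried out in the proof of Proposition~\ref{GeAKRS-exact-dc} (the same reduction that underlies the equation $\Delta^{\tilde g}u=2u$ in Theorem~\ref{killing-algebra}) identifies the Laplacian $\Delta^{\tilde g}$ of the conformal metric $\tilde g=e^{-2f_\xi/(n-1)}g$, acting on $T$-invariant functions, with the weighted operator $u\mapsto\Delta^g u+2\,g(du,df_\xi)$. Thus the hypothesis $\Delta^{\tilde g}z_l=2z_l$ becomes the first-order system
\[
-\sum_i H_{li,i}+2\sum_i a_iH_{il}=2z_l,\qquad 1\le l\le n.
\]

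Finally, differentiating this first-order system with respect to $z_k$ gives $-\sum_i H_{li,ik}+2\sum_i a_iH_{il,k}=2\delta_{kl}$, which is exactly the second-order system above after multiplying by $-1$; this establishes $\rho^\nabla-\omega=-dd^cf_\xi$. That $g$ is then a GeAKRS follows by tracing this identity with $\omega$ and contracting it with $\xi$, using the normalization of $f_\xi$, exactly along the lines of the monotone computation in Proposition~\ref{positivity} and the converse direction of Proposition~\ref{GeAKRS-exact-dc}. I expect the main obstacle to be the translation step: pinning down the precise relation between $\Delta^{\tilde g}z_l$ and the first-order quantity $W_l$ — i.e.\ handling the conformal factor $e^{2f_\xi/(n-1)}$ exactly as in Proposition~\ref{GeAKRS-exact-dc}, so that the hypothesis really is the clean equation $W_l=2z_l$ — and then matching signs and indices so that the differentiated first-order system coincides with the second-order target.
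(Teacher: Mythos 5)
Your proposal is correct and follows essentially the same route as the paper: the paper likewise translates the hypothesis $\Delta^{\tilde g}z_i=2z_i$ into the first-order system $-\sum_j H_{ij,j}+2\sum_j a_jH_{ji}=2z_i$ and then differentiates it inside the coordinate formula for $\rho^\nabla$ to obtain $\rho^\nabla=\omega-dd^cf_\xi$, concluding GeAKRS via Proposition~\ref{GeAKRS-exact-dc}. The one point you flag as delicate --- identifying $\Delta^{\tilde g}$ on invariant functions with the weighted operator $u\mapsto\Delta^g u+2g(du,df_\xi)$ --- is treated in exactly the same (implicit) way in the paper's proof.
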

\begin{proof}
We recall that $$\omega=\sum_{i=1}^ndz_i\wedge dt_i,$$ and
$$g=\sum_{i,j=1}^n G_{ij}(z)dz_i\otimes dz_j+H_{ij}(z)dt_i\otimes dt_j.$$
Then, we have for each $i$
$$\Delta^g z_i=-\sum_{j}^nH_{ij,j}$$
and 
$$g(dz_i,df_\xi)=\sum_{j=1}^na_jH_{ni}.$$
By hypothesis, we have $\Delta^{\tilde{g}}z_i=2z_i,$ thus
\begin{equation}\label{toric-eq}
-\sum_{j}^nH_{ij,j}+2\sum_{j=1}^na_jH_{ji}=2z_i.
\end{equation}
Using Equation~(\ref{toric-eq}) we get the following
\begin{eqnarray*}
\rho^\nabla&=&-\frac{1}{2}\sum_{j,l,k}H_{lj,jk}\,dz_k\wedge dt_l,\\
&=&-\frac{1}{2}\sum_{j,l,k}\frac{\partial}{\partial z_k}\left(H_{lj,j}\right)\,dz_k\wedge dt_l,\\
&=&-\frac{1}{2}\sum_{j,l,k}\frac{\partial}{\partial z_k}\left(-2z_l+2\sum_{j=1}^na_jH_{jl}\right)\,dz_k\wedge dt_l,\\
&=&\sum_kdz_k\wedge dt_k-\sum_{j=1}^na_jH_{jl,k}\,dz_k\wedge dt_l,\\
&=&\omega-dd^cf_\xi.
\end{eqnarray*}

\end{proof}

For a general GeAKRS, we can show the following using the proof of Theorem~\ref{killing-algebra}

\begin{prop}
Let $(M,\omega,J,g)$ be a compact connected symplectic Fano GeAKRS manifold of real dimension $2n$. Let $X$ be a Killing vector field. Then $X$ is a Hamiltonian Killing vector field such that the Killing potential $u$ satisfies
 \begin{eqnarray}
 \frac{1}{2}\Delta^gu&=&u-g\left(d^cu,a\right)\label{result-4},
 \end{eqnarray}
with $\int_Mue^{-2f_\xi}\omega^n=0$, where $a$ is a $1$-form satisfying $\rho^\nabla-\omega=da$ and $\mathcal{L}_Xa=0$ (here $\mathcal{L}$ denotes the Lie derivative).
 \end{prop}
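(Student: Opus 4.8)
The plan is to run the Killing-field argument from the proof of Theorem~\ref{killing-algebra}, but with the general primitive $a$ in place of $-d^cf_\xi$; the genuinely new point is that the additive constant in the resulting equation must be eliminated using the conformal metric $\tilde g$ and the description of GeAKRS in Proposition~\ref{GeAKRS-exact-dc}, rather than by the elementary cancellation available when $a=-d^cf_\xi$.

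First I would show that $X$ is Hamiltonian. Since $\omega$ is $\Delta^g$-harmonic and $X$ is Killing, $X$ is holomorphic, and $\mathcal{L}_X\omega=d(X\hook\omega)$ is exact while being harmonic (isometries commute with $\Delta^g$), so $\mathcal{L}_X\omega=0$. By the proof of Theorem~\ref{killing-algebra} the Riemannian dual $\alpha$ of $X$ is then $\alpha=d^cu+dh$ with $\mathcal{L}_X\omega=dd^ch$; hence $h\equiv0$, $\alpha=d^cu$, and $X=\grad_\omega u$ is Hamiltonian with potential $u$, which I normalize by $\int_Mue^{-2f_\xi}\omega^n=0$. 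I would also choose $a$ with $\mathcal{L}_Xa=0$: the closure of the flow of $X$ is a torus $T'$ of isometries, $\rho^\nabla-\omega$ is exact and $T'$-invariant (as $X$ preserves $J$, $\omega$, hence $\rho^\nabla$), so averaging any primitive over $T'$ produces an invariant primitive $a$ with $da=\rho^\nabla-\omega$ and $\mathcal{L}_Xa=0$.

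Next I would feed this into the holomorphy identity~\eqref{bochner}. Because $X$ is Killing, $(D^g\alpha)^{sym}=0$, so, exactly as in the proof of Theorem~\ref{killing-algebra}, the left-hand side reduces to $\tfrac12 d^c\Delta^gu$. For the right-hand side, $\rho^\nabla=\omega+da$ gives $\rho^\nabla(X,J\cdot)=\omega(X,J\cdot)+da(X,J\cdot)$; compatibility yields $\omega(X,J\cdot)=\alpha$, while Cartan's formula and $\mathcal{L}_Xa=0$ give $X\hook da=-d(a(X))$, hence $da(X,J\cdot)=d^c(a(X))$ with $a(X)=g(d^cu,a)$. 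Therefore $\tfrac12 d^c\Delta^gu=d^cu+d^c\big(g(d^cu,a)\big)$, and since $M$ is connected this integrates to the asserted identity~\eqref{result-4} up to an additive constant $c$.

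The crux is to show $c=0$, and this is where the GeAKRS structure enters. Multiplying by $e^{-2f_\xi}$, integrating, and using both the normalization of $u$ and the integration by parts $\tfrac12\int_M\Delta^gu\,e^{-2f_\xi}\omega^n=-\int_Mg(d^cu,d^cf_\xi)\,e^{-2f_\xi}\omega^n$, one reduces the determination of $c$ to the vanishing of $\int_Mg\big(d^cu,\,a+d^cf_\xi\big)\,e^{-2f_\xi}\omega^n$. Here I use that the conformal weight in $\tilde g=e^{-\frac{2f_\xi}{n-1}}g$ is tuned so that, on $1$-forms, $\int_Mg(\beta_1,\beta_2)\,e^{-2f_\xi}\omega^n$ is a fixed multiple of the $L^2(\tilde g)$-inner product. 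As $d^cu$ is $d^c$-exact it is $L^2(\tilde g)$-orthogonal to the $\tilde\Delta^c$-harmonic and $\tilde\delta^c$-exact summands, so it pairs only with the $d^c$-exact part of $a+d^cf_\xi$; but Proposition~\ref{GeAKRS-exact-dc} identifies the $d^c$-exact part of $a$ with $-d^cf_\xi$, so $a+d^cf_\xi$ has no $d^c$-exact part and the pairing vanishes. Thus $c=0$, which completes the proof. I expect this last step — recognizing the weighted $g$-pairing as the $\tilde g$ Hodge pairing and invoking Proposition~\ref{GeAKRS-exact-dc} — to be the main obstacle, as it is exactly what replaces the elementary cancellation of the special case $a=-d^cf_\xi$.
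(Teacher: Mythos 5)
Your argument is correct and follows the paper's skeleton for most of its length (Bochner identity \eqref{bochner} with $(D^g\alpha)^{sym}=0$, Cartan's formula with the averaged $X$-invariant primitive $a$, integration against $e^{-2f_\xi}\omega^n$ with the normalization of $u$), but it diverges from the paper at the decisive step $c=0$. The paper integrates both terms by parts onto $u$ and invokes the \emph{pointwise} GeAKRS identity \eqref{conformal-equation}, $\Delta^gf_\xi+2|df_\xi|^2_g=\delta^gJa+2g(Ja,df_\xi)$, to cancel the two resulting integrands; you instead observe that $\int_Mg(\beta_1,\beta_2)e^{-2f_\xi}\omega^n$ is (for $2n>2$) the $L^2(\tilde g)$ pairing of $1$-forms for $\tilde g=e^{-2f_\xi/(n-1)}g$, and kill $\int_Mg(d^cu,a+d^cf_\xi)e^{-2f_\xi}\omega^n$ by orthogonality of the $\tilde\Delta^c$-Hodge decomposition together with Proposition~\ref{GeAKRS-exact-dc}. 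The two mechanisms are equivalent in content --- \eqref{conformal-expression} \emph{is} the statement that the $d^c$-exact part of $a$ is $-d^cf_\xi$ --- but yours is more structural and makes transparent why only the $d^c$-exact part of $a$ matters, while the paper's is a direct computation that never needs the conformal metric and so does not require $2n>2$. Two small points to tighten: (i) you assert $\alpha=d^cu$ and the Hamiltonicity of $X$ up front by appealing to the proof of Theorem~\ref{killing-algebra}, but that theorem assumes $\rho^\nabla-\omega=-dd^cf_\xi$, which is not a hypothesis here; the clean route (the paper's) is to write $\alpha=\alpha_{H^c}+d^cu$ from $\mathcal{L}_X\omega=0$ and then read off $\alpha_{H^c}=0$ from the Bochner identity $\tfrac12 d^c\Delta^gu=\alpha+d^c(a(X))$, which makes $\alpha$ $d^c$-exact; (ii) Proposition~\ref{GeAKRS-exact-dc} identifies the $d^c$-exact part of a \emph{$G$-invariant} primitive, so you should either average $a$ over a group containing both the closure of the flow of $X$ and $G$, or note that your $a$ and a $G$-invariant one differ by a closed form and control that difference --- an imprecision the paper's own proof shares, since \eqref{conformal-equation} is likewise derived for a $G$-invariant $a$.
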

 \begin{proof}
 Since $X$ is a Killing vector, the Riemannian dual $\alpha$ of $X$ is of the form
 $$\alpha=\alpha_{H^c}+d^cu.$$ Moreover, $\left(D^g\alpha\right)^{sym}=0$. Hence, as in the proof of Theorem~\ref{killing-algebra}, the left-hand side of Equation~(\ref{bochner}) reduces to
 $$\delta^g\left(D^g\alpha\right)^{J,+}-\delta^g\left(D^g\alpha\right)^{J,-}=\frac{1}{2}d^c\Delta^gu.$$
 On the other hand, $\rho^\nabla-\omega=da$ for some $1$-form $a$. Since $M$ is connected, we can choose $a$ such that $\mathcal{L}_Xa=0$. Hence, using Cartan formula, the right-hand side of Equation~(\ref{bochner}) is given by
   \begin{eqnarray*}
   \rho^\nabla\left(X,J\cdot\right)&=&\omega(X,J\cdot)+da\left(X,J\cdot\right),\\
   &=&\alpha+d^c\left(g(a,\alpha)\right),
   \end{eqnarray*}
Equation~(\ref{bochner}) is now given by
$$\frac{1}{2}d^c\Delta^gu=\alpha+d^c\left(g(a,\alpha)\right).$$
We deduce that $\alpha_{H^c}=0$ and $\frac{1}{2}\Delta^gu=u+g\left(a,\alpha\right)+c$, for some constant $c$. Multiplying by $e^{-2f_\xi}$ and integrating, we get 
\begin{eqnarray*}
 \frac{1}{2}\int_M \Delta^gue^{-2f_\xi}\omega^n-\int_Mg\left(d^cu,a\right)e^{-2f_\xi}\omega^n&=&c\int_Me^{-2f_\xi}\omega^n,\\
  \frac{1}{2}\int_M \Delta^gue^{-2f_\xi}\omega^n+\int_Mg\left(du,Ja\right)e^{-2f_\xi}\omega^n&=&c\int_Me^{-2f_\xi}\omega^n,\\
  -\int_M u\left(\Delta^gf_\xi+2|df_\xi|^2_g\right)e^{-2f_\xi}\omega^n+\int_Mu\left(\delta^gJa+2g(Ja,df_\xi)\right)e^{-2f_\xi}\omega^n&=&c\int_Me^{-2f_\xi}\omega^n,\\
  0&=&c\int_Me^{-2f_\xi}\omega^n,
  \end{eqnarray*}
where we use in the last line that the metric is a GeAKRS. Therefore $c=0.$
%It follows from the normalization $\int_Mue^{-2f_\xi}=0$ and the fact that the metric is GeAKRS that $c=0.$
 \end{proof}
 
 We call an almost-K\"ahler metric $(\omega,J,g)$ that satisfies the condition
 $$\rho^\nabla=\lambda\omega,$$ where $\lambda=0,1,-1$ a {\it{first-Chern--Einstein metric}} (such a metric is also called Hermite--Einstein or special in the literature; for more about these metrics in the non-K\"ahler setting, we refer the reader for instance to~\cite{MR3331165,MR2795448,MR2988734,MR4039808,MR4140765,MR4437352,MR4125707,MR4515775}). The proof of
 Theorem~\ref{killing-algebra} shows the following when the almost-K\"ahler metric is a first-Chern--Einstein metric. This can be compared to Matsushima's theorem~\cite{MR0094478} in the K\"ahler case
 \begin{cor}\label{AK-Matsushima}
 Let $(M,\omega,J,g)$ be a compact almost-K\"ahler manifold such that the metric $g$ is a first-Chern--Einstein metric. Then, we have the following
 \begin{enumerate}
 \item If $\rho^\nabla=0$, then any holomorphic vector field is a Killing vector field. Moreover,
 the Lie algebra of holomorphic vector fields is abelian and the Riemannian dual of any holomorphic vector field is harmonic with respect to the twisted Laplacian
 $\Delta^c=J\Delta^gJ^{-1}.$\\
 \item If $\rho^\nabla=-\omega$, then there are no non-trivial holomorphic vector fields on $M.$\\
 \item If $\rho^\nabla=\omega$, then the Riemannian dual $\alpha$ of a holomorphic vector field $X$ is of the form
 $$\alpha=d^cu+dh,$$
 such that 
 \begin{eqnarray*}
 \Delta^gu=2u,\\ 
 \Delta^gh=2h,\\
  D^g_{(dh)^\sharp}\omega=0.
 \end{eqnarray*}
 In particular, there are no non-trivial holomorphic vector fields if $2$ is not an eigenvalue of $\Delta^g.$ Moreover, if $X$ is a Killing vector field, then $X$ is a Hamiltonian vector field.
% Moreover, any eigenfunction $v$ of $\Delta^g$ satisfying $$\int_Mg(\left(D^gd^cv\right)^{sym},\left(N(Jgrad^gv,\cdot)\right)^{sym}) \omega^2=0$$ has an eigenvalue larger or equal to $2.$ 
 \end{enumerate}
 \end{cor}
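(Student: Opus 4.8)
The plan is to specialize the Bochner-type argument in the proof of Theorem~\ref{killing-algebra} to the first-Chern--Einstein condition $\rho^\nabla=\lambda\omega$, $\lambda\in\{0,1,-1\}$, taking the potential $f_\xi\equiv 0$. When $\lambda=1$ the metric is already a GeAKRS with trivial $\xi$ (indeed $s^c=\Lambda\rho^\nabla=n$ and $\rho^\nabla-\omega=0=-dd^c(0)$), so Theorem~\ref{killing-algebra} applies essentially verbatim and yields case (3). For the remaining values of $\lambda$ I would rerun the same computation, the only structural change being that the right-hand side of the identity~(\ref{bochner}) collapses: since $\alpha=g(X,\cdot)=\omega(X,J\cdot)$,
\begin{equation*}
	\rho^\nabla(X,J\cdot)=\lambda\,\omega(X,J\cdot)=\lambda\alpha,
\end{equation*}
with none of the $d^c f_\xi$-correction terms present. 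The processing of the left-hand side of~(\ref{bochner}) is independent of $\rho^\nabla$ (it uses only the holomorphicity of $X$ and the decomposition $\alpha=\alpha_{H^c}+d^cu-J\mathbb{G}d^cv$, together with the fact that $dJ\alpha_{H^c}=0$ for a $\Delta^c$-harmonic form), so it carries over unchanged and I would reach the analogue of~(\ref{bochner-equation}), namely $\tfrac12 dv+\tfrac12 d^c\Delta^g u=\lambda\alpha$.

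Applying $d$ and tracing with $\omega$, using $\Lambda\,dd^c w=-\Delta^g w$ on functions, $\Lambda\,dJ\mathbb{G}d^cv=0$, and $\Lambda\,d\alpha_{H^c}=-\delta^c\alpha_{H^c}=0$, the harmonic contribution drops out and one obtains $\Delta^g(\tfrac12\Delta^g u-\lambda u)=0$; since $f_\xi\equiv 0$ the weight $e^{-2f_\xi}$ is trivial, so the normalization $\int_M u\,\omega^n=0$ forces $\tfrac12\Delta^g u=\lambda u$. The parallel manipulation applied via $d^c$, together with $v=\Delta^g h$ and $\int_M h\,\omega^n=0$, yields $\tfrac12\Delta^g h=\lambda h$, the relation $-J\mathbb{G}d^cv=dh$, and $D^g_{(dh)^\sharp}\omega=0$. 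Substituting $-J\mathbb{G}d^cv=dh$ back into $\tfrac12 dv+\tfrac12 d^c\Delta^g u=\lambda\alpha$ and invoking the two eigenvalue equations then collapses the identity to $\lambda\,\alpha_{H^c}=(\tfrac12 dv-\lambda\,dh)+d^c(\tfrac12\Delta^g u-\lambda u)=0$, i.e. $\lambda\,\alpha_{H^c}=0$.

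The corollary now splits into three cases. For $\lambda=-1$ the equations $\Delta^g u=-2u$, $\Delta^g h=-2h$ admit only the zero solution because $\Delta^g\geq 0$ on a compact manifold, and $\lambda\neq 0$ gives $\alpha_{H^c}=0$; hence $\alpha=0$ and there are no non-trivial holomorphic vector fields. For $\lambda=1$ one reads off $\Delta^g u=2u$, $\Delta^g h=2h$ and $\alpha=d^cu+dh$ (as $\alpha_{H^c}=0$), with $D^g_{(dh)^\sharp}\omega=0$; if $2\notin\operatorname{Spec}(\Delta^g)$ then $u=h=0$ and $X=0$, while if $X$ is Killing then $\mathcal{L}_X\omega=dd^ch=0$ forces $\Delta^gh=0$, hence $h=0$ and $X=\operatorname{grad}_\omega u$ is Hamiltonian. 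For $\lambda=0$ the right-hand side $\lambda\alpha$ vanishes identically and imposes no condition on $\alpha_{H^c}$; instead $\Delta^g u=\Delta^g h=0$ makes $u,h$ constant, so $d^cu=dh=0$ and $\alpha=\alpha_{H^c}$ is $\Delta^c$-harmonic. Since then $v$ is constant, $\mathcal{L}_X\omega=d\mathbb{G}d^cv=0$, so every holomorphic vector field is Killing; identifying holomorphic fields with their $\Delta^c$-harmonic duals and applying the standard parallelizability argument (as in Matsushima's theorem) then gives that their Lie algebra is abelian.

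The main obstacle is the correct bookkeeping of the $\Delta^c$-harmonic part $\alpha_{H^c}$, whose behavior is genuinely $\lambda$-dependent: for $\lambda=\pm 1$ it must be shown to vanish, and this vanishing relies crucially on feeding the freshly derived eigenvalue equations for $u$ and $h$ back into the Bochner identity, whereas for $\lambda=0$ it is unconstrained and constitutes the entire dual $1$-form. A secondary delicate point is the flat case $\lambda=0$, where one must first upgrade ``holomorphic'' to ``Killing'' via the constancy of $v$ before the Matsushima-type argument for the abelian structure can be invoked; care is also needed to verify, as in Theorem~\ref{killing-algebra}, the relation $-J\mathbb{G}d^cv=dh$ in the non-integrable setting, since $\Delta^g$ and $\Delta^c$ no longer commute with $d^c$.
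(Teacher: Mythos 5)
Your proposal is correct and follows essentially the same route as the paper: both specialize the Bochner identity~(\ref{bochner}) and the decomposition $\alpha=\alpha_{H^c}+d^cu-J\mathbb{G}d^cv$ from the proof of Theorem~\ref{killing-algebra} to $\rho^\nabla=\lambda\omega$ with $f_\xi\equiv 0$, arriving at $\tfrac12 dv+\tfrac12 d^c\Delta^g u=\lambda\alpha$ and then running the case analysis (with $\lambda=1$ handled by invoking Theorem~\ref{killing-algebra} directly). Your reorganization --- deriving the eigenvalue equations uniformly in $\lambda$ and then feeding them back to kill $\alpha_{H^c}$ when $\lambda\neq 0$ --- is only a cosmetic variant of the paper's component-by-component reading of the same identity.
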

 \begin{proof}
Let $X$ be a holomorphic vector field. The Riemannian dual $\alpha$ of $X$ can be decomposed as $$\alpha=\alpha_{H^c}+d^cu-J\mathbb{G}d^cv.$$ From Equation~(\ref{bochner-equation}) in the proof of Theorem~\ref{killing-algebra}, we have  
\begin{equation}\label{einstein-equation}
 \frac{1}{2} dv+\frac{1}{2}d^c\Delta^gu=\rho^\nabla\left(X,J\cdot\right).
\end{equation}

If $\rho^\nabla=0$, then it follows from Equation~(\ref{einstein-equation}) that $v=u\equiv 0$, so $\alpha=\alpha_{H^c}$. It follows that
$\mathcal{L}_X\omega=dJ\alpha=0$ and so $X$ is a Killing vector field. Therefore $X$ preserves any harmonic $1$-form with respect to the twisted Laplacian $\Delta^c$. We deduce that the Lie algebra of holomorphic vector fields is abelian.

If $\rho^\nabla=-\omega$ then it follows from Equation~(\ref{einstein-equation}) that $\frac{1}{2} dv+\frac{1}{2}d^c\Delta^gu=-\alpha.$ Thus, $\alpha_{H^c}=0$ and  
\begin{eqnarray}
\frac{1}{2}d^c\Delta^gu&=&-d^cu,\label{negative-1}\\
\frac{1}{2} dv&=&J\mathbb{G}d^cv\label{negative-2}.
\end{eqnarray}
 Suppose that $u,v$ are normalized so that $\int_Mu\,\omega^n=\int_Mv\,\omega^n=0.$ It follows from Equation~(\ref{negative-1}) that $\frac{1}{2}\Delta^gu=-u$ and hence $u \equiv 0$ since $M$ is compact. Applying $dJ$ to Equation~(\ref{negative-2}) gives $\frac{1}{2} dd^cv=-d\mathbb{G}d^cv$.
 Taking the trace and since $d\mathbb{G}d^cv$ is $J$-invariant, we obtain $-\frac{1}{2}\Delta^gv=v$ and so $v\equiv 0.$ We conclude then that there are no non-trivial holomorphic vector fields on $M.$

If $\rho^\nabla=\omega$, then we apply Theorem~\ref{killing-algebra} with $f_\xi\equiv 0.$ The Riemannian dual $\alpha$ of $X$ can be expressed as $\alpha=d^cu+dh$. Moreover, Equations~(\ref{result-1}) and~(\ref{result-2}) reduce to $\Delta^gu=2u,$ and $\Delta^gh=2h.$ 
\end{proof}

%In  Now, for any eigenfunction $v$ of the Riemannian Laplacian with eigenvalue $\lambda$, using Lemma~\ref{lichnerowicz-AK}, we have that
% 
%\begin{eqnarray*}
%\int_Mg(\delta^g\left(D^gd^cv\right)^{sym}_{J\cdot,\cdot},dv)\omega^2&=&\int_M-\frac{1}{2}g\left(d(\Delta^g)v,dv\right)-g\left(\rho^\nabla(d^cf,\cdot),df\right)\omega^2,\\
%\int_Mg(\left(D^gd^cv\right)^{sym}_{J\cdot,\cdot},D^gdv)\omega^2&=&\left(-\frac{1}{2}\lambda+1\right)\int_M|df|^2_g\omega^2,\\
%\int_Mg(\left(D^gd^cv\right)^{sym}_{J\cdot,\cdot},\left(D^gdv\right)^{J,-})\omega^2&=&\left(-\frac{1}{2}\lambda+1\right)\int_M|df|^2_g\omega^2.
%\end{eqnarray*}
%Assuming that $\int_Mg(\left(D^gd^cv\right)^{sym},\left(N(Jgrad^gv,\cdot)\right)^{sym}) \omega^2$ then
%$$\int_Mg(\left(D^gd^cv\right)^{sym}_{J\cdot,\cdot},\left(D^gdv\right)^{J,-})\omega^2=\int_M|\left(D^gd^cv\right)^{sym}|^2\omega^2$$ and the conclusion follows.

 \begin{rem}\label{remark2}
 If $\rho^\nabla=\omega$ and $u$ is a function such that $\Delta^gu=2u$, then it follows from Lemma~\ref{lichnerowicz-AK} that
$\delta^g\left(Dd^cu\right)^{sym}_{J\cdot,\cdot}=0$. Howeover, this does not imply that $J\delta^g(Dd^cu)^{sym}$ vanishes (unless the metric is K\"ahler) so we can not conclude that $u$ is a Killing potential.
 \end{rem}

\appendix

\section{Proof of Lemma~\ref{lichnerowicz-AK}}

For any $1$-form $\alpha$, we have $(D^g\alpha)^{J,-}=\frac{1}{2}JD^g_{\alpha^\sharp} J-\frac{1}{2}J\mathfrak{L}_{\alpha^\sharp} J$, where $(\cdot)^{J,-}$ is the $J$-anti-invariant part. Moreover, $-J\mathfrak{L}_{\alpha^\sharp} J=2(D^g\alpha)^{sym}+(dJ\alpha)(J\cdot,\cdot).$ When $\alpha=d^cf$, we deduce that 
\begin{equation}\label{eq2}
(D^gd^cf)^{sym}=(D^gd^cf)^{J,-}-\frac{1}{2}D^g_{(df)^\sharp}\omega.
\end{equation}

On the other hand, from~\cite[Lemma 2.3]{MR2747965} and $\delta^g(D^gd^cf)=\frac{1}{2}\delta^gdd^cf+\delta^g(D^gd^cf)^{sym}$, we obtain 
\begin{equation*}
2\delta^g(D^gd^cf)^{J,-}=\frac{1}{2}\delta^gdd^cf+\delta^g(D^gd^cf)^{sym}-\rho^\ast\left(\left(d^cf\right)^\sharp,J\cdot\right)+\sum_{i=1}^{2n}(D^g_{Je_i}d^cf)\left((D^g_{e_i}J)(\cdot) \right),
\end{equation*}
where $\rho^\ast$ is the $\ast$-Ricci form defined as the image of the symplectic form $\omega$ under the Riemannian curvature operator (with respect to the Levi-Civita connection). Combining with Equation~(\ref{eq2}) and applying $J$, we obtain
\begin{align}\label{eq1}
J\delta^g(D^gd^cf)^{sym}=&\, \frac{1}{4}J\delta^gdd^cf+\frac{1}{2}J\delta^g(D^gd^cf)^{sym}-\frac{1}{2}\rho^\ast\left(\left(d^cf\right)^\sharp,\cdot\right)\\ 
& -\frac{1}{2}\sum_{i=1}^{2n}(D^g_{e_i}d^cf)\big{(}(D^g_{e_i}J)(\cdot) \big{)}-\frac{1}{2}J\delta^gD^g_{(df)^{\sharp}}\omega\nonumber.
\end{align}

On an almost-K\"ahler manifold $dd^cf+d^cdf=2D^g_{\left(df\right)^\sharp}\omega$ for any function $f$, so 
\begin{equation*}
J\delta^gdd^cf+d\Delta^gf=2J\delta^gD^g_{\left(df\right)^\sharp}\omega.
\end{equation*}
Then Equation~(\ref{eq1}) becomes
\begin{equation}\label{eq3}
J\delta^g(D^gd^cf)^{sym}=-\frac{1}{2}d\Delta^gf-\rho^\ast\left(\left(d^cf\right)^\sharp,\cdot\right)-\sum_{i=1}^{2n}(D^g_{e_i}d^cf)\left((D^g_{e_i}J)(\cdot) \right).
\end{equation}
Now recall the following relation between the $\ast$-Ricci form and the first-Chern--Ricci form $\rho^\nabla$ (see~\cite{MR1969266} for instance):
$$\rho^\nabla(X,Y)=\rho^\ast(X,Y)-\frac{1}{4}\tr\left(JD^g_XJ\circ D^g_YJ\right).$$
Since $(D^g_{e_i}d^cf)\left((D^g_{e_i}J)(\cdot) \right)=(D^gd^cf)^{J,-}\left(e_i,(D^g_{e_i}J)(\cdot) \right)$, Equation~(\ref{eq3}) becomes
$$J\delta^g(D^gd^cf)^{sym}=-\frac{1}{2}d\Delta^gf-\rho^\nabla\left(\left(d^cf\right)^\sharp,\cdot\right)-\frac{1}{4}\tr\left(D^g_{\left(df\right)^\sharp}J\circ D^g_{\cdot}J\right)-\sum_{i=1}^{2n}(D^gd^cf)^{J,-}\left(e_i,(D^g_{e_i}J)(\cdot) \right).$$
Using Equation~(\ref{eq2}), we obtain
\begin{eqnarray*}
J\delta^g(D^gd^cf)^{sym}&=&-\frac{1}{2}d\Delta^gf-\rho^\nabla\left(\left(d^cf\right)^\sharp,\cdot\right)-\frac{1}{4}\tr\left(D^g_{\left(df\right)^\sharp}J\circ D^g_{\cdot}J\right)-\sum_{i=1}^{2n}(D^gd^cf)^{sym}\left(e_i,(D^g_{e_i}J)(\cdot) \right)\\
&-&\frac{1}{2}\sum_{i=1}^{2n}\left(D^g_{(df)^\sharp}\omega\right)\left(e_i,(D^g_{e_i}J)(\cdot) \right),\\
&=&-\frac{1}{2}d\Delta^gf-\rho^\nabla\left(\left(d^cf\right)^\sharp,\cdot\right)-\frac{1}{4}\tr\left(D^g_{\left(df\right)^\sharp}J\circ D^g_{\cdot}J\right)-\sum_{i=1}^{2n}(D^gd^cf)^{sym}\left(e_i,(D^g_{e_i}J)(\cdot) \right)\\
&-&\frac{1}{2}\sum_{i,k=1}^{2n}\left(D^g_{(df)^\sharp}\omega\right)(e_i,e_k)\otimes \left(D^g_{e_i}\omega\right)(\cdot,e_k),\\
&=&-\frac{1}{2}d\Delta^gf-\rho^\nabla\left(\left(d^cf\right)^\sharp,\cdot\right)-\frac{1}{4}\tr\left(D^g_{\left(df\right)^\sharp}J\circ D^g_{\cdot}J\right)-\sum_{i=1}^{2n}(D^gd^cf)^{sym}\left(e_i,(D^g_{e_i}J)(\cdot) \right)\\
&-&\frac{1}{4}\sum_{i,k=1}^{2n}\left(D^g_{(df)^\sharp}\omega\right)(e_i,e_k)\otimes \left[\left(D^g_{e_i}\omega\right)(\cdot,e_k)-\left(D^g_{e_k}\omega\right)(\cdot,e_i)\right],\\
&=&-\frac{1}{2}d\Delta^gf-\rho^\nabla\left(\left(d^cf\right)^\sharp,\cdot\right)-\frac{1}{4}\tr\left(D^g_{\left(df\right)^\sharp}J\circ D^g_{\cdot}J\right)-\sum_{i=1}^{2n}(D^gd^cf)^{sym}\left(e_i,(D^g_{e_i}J)(\cdot) \right)\\
&-&\frac{1}{4}\sum_{i,k=1}^{2n}\left(D^g_{(df)^\sharp}\omega\right)(e_i,e_k)\otimes\left(D^g_{(df)^\sharp}\omega\right)(e_i,e_k),\\
&=&-\frac{1}{2}d\Delta^gf-\rho^\nabla\left(\left(d^cf\right)^\sharp,\cdot\right)-\frac{1}{4}\tr\left(D^g_{\left(df\right)^\sharp}J\circ D^g_{\cdot}J\right)-\sum_{i=1}^{2n}(D^gd^cf)^{sym}\left(e_i,(D^g_{e_i}J)(\cdot) \right)\\
&+&\frac{1}{4}\tr\left(D^g_{\left(df\right)^\sharp}J\circ D^g_{\cdot}J\right),\\
&=&-\frac{1}{2}d\Delta^gf-\rho^\nabla\left(\left(d^cf\right)^\sharp,\cdot\right)-\sum_{i=1}^{2n}(D^gd^cf)^{sym}\left(e_i,(D^g_{e_i}J)(\cdot) \right).
\end{eqnarray*}
Because $(D^gd^cf)^{sym}$ is $J$-anti-invariant, we obtain
\begin{equation*}
J\delta^g(D^gd^cf)^{sym}+\sum_{i=1}^{2n}(D^gd^cf)^{sym}\left(e_i,(D^g_{e_i}J)(\cdot) \right)=\delta^g\left(\left(D^gd^cf\right)^{sym}_{J\cdot,\cdot}\right).
\end{equation*}
Lemma~\ref{lichnerowicz-AK} follows.
  \bibliographystyle{abbrv}

\bibliography{solitons}

\begin{thebibliography}{10}

\bibitem{MR1895351}
M.~Abreu.
\newblock K\"{a}hler metrics on toric orbifolds.
\newblock {\em J. Differential Geom.}, 58(1):151--187, 2001.

\bibitem{MR4140765}
D.~V. Alekseevsky and F.~Podest\`a.
\newblock Homogeneous almost-{K}\"{a}hler manifolds and the {C}hern-{E}instein
  equation.
\newblock {\em Math. Z.}, 296(1-2):831--846, 2020.

\bibitem{MR4125707}
D.~Angella, S.~Calamai, and C.~Spotti.
\newblock Remarks on {C}hern-{E}instein {H}ermitian metrics.
\newblock {\em Math. Z.}, 295(3-4):1707--1722, 2020.

\bibitem{MR2144249}
V.~Apostolov, D.~M.~J. Calderbank, P.~Gauduchon, and C.~W.
  T{\o}nnesen-Friedman.
\newblock Hamiltonian 2-forms in {K}\"{a}hler geometry. {II}. {G}lobal
  classification.
\newblock {\em J. Differential Geom.}, 68(2):277--345, 2004.

\bibitem{MR2807093}
V.~Apostolov, D.~M.~J. Calderbank, P.~Gauduchon, and C.~W.
  T{\o}nnesen-Friedman.
\newblock Extremal {K}\"{a}hler metrics on projective bundles over a curve.
\newblock {\em Adv. Math.}, 227(6):2385--2424, 2011.

\bibitem{MR1969266}
V.~Apostolov and T.~Dr\u{a}ghici.
\newblock The curvature and the integrability of almost-{K}\"{a}hler manifolds:
  a survey.
\newblock In {\em Symplectic and contact topology: interactions and
  perspectives ({T}oronto, {ON}/{M}ontreal, {QC}, 2001)}, volume~35 of {\em
  Fields Inst. Commun.}, pages 25--53. Amer. Math. Soc., Providence, RI, 2003.

\bibitem{MR3941493}
V.~Apostolov and G.~Maschler.
\newblock Conformally {K}\"{a}hler, {E}instein-{M}axwell geometry.
\newblock {\em J. Eur. Math. Soc. (JEMS)}, 21(5):1319--1360, 2019.

\bibitem{MR4515775}
M.~Cahen, S.~Gutt, M.~Hayyani, and M.~Raouyane.
\newblock Some pseudo-{K}\"{a}hler {E}instein 4-symmetric spaces with a
  ``twin'' special almost complex structure.
\newblock {\em Differential Geom. Appl.}, 86:Paper No. 101958, 23, 2023.

\bibitem{MR0645743}
E.~Calabi.
\newblock Extremal {K}\"{a}hler metrics.
\newblock In {\em Seminar on {D}ifferential {G}eometry}, volume No. 102 of {\em
  Ann. of Math. Stud.}, pages 259--290. Princeton Univ. Press, Princeton, NJ,
  1982.

\bibitem{MR1417944}
H.-D. Cao.
\newblock Existence of gradient {K}\"{a}hler-{R}icci solitons.
\newblock In {\em Elliptic and parabolic methods in geometry ({M}inneapolis,
  {MN}, 1994)}, pages 1--16. A K Peters, Wellesley, MA, 1996.

\bibitem{MR4039808}
A.~Della~Vedova.
\newblock Special homogeneous almost complex structures on symplectic
  manifolds.
\newblock {\em J. Symplectic Geom.}, 17(5):1251--1295, 2019.

\bibitem{MR984900}
T.~Delzant.
\newblock Hamiltoniens p\'{e}riodiques et images convexes de l'application
  moment.
\newblock {\em Bull. Soc. Math. France}, 116(3):315--339, 1988.

\bibitem{MR2795448}
A.~J. Di~Scala and L.~Vezzoni.
\newblock Chern-flat and {R}icci-flat invariant almost {H}ermitian structures.
\newblock {\em Ann. Global Anal. Geom.}, 40(1):21--45, 2011.

\bibitem{MR1622931}
S.~K. Donaldson.
\newblock Remarks on gauge theory, complex geometry and {$4$}-manifold
  topology.
\newblock In {\em Fields {M}edallists' lectures}, volume~5 of {\em World Sci.
  Ser. 20th Century Math.}, pages 384--403. World Sci. Publ., River Edge, NJ,
  1997.

\bibitem{MR1988506}
S.~K. Donaldson.
\newblock Scalar curvature and stability of toric varieties.
\newblock {\em J. Differential Geom.}, 62(2):289--349, 2002.

\bibitem{MR2154300}
S.~K. Donaldson.
\newblock Interior estimates for solutions of {A}breu's equation.
\newblock {\em Collect. Math.}, 56(2):103--142, 2005.

\bibitem{MR2483362}
S.~K. Donaldson.
\newblock K\"{a}hler geometry on toric manifolds, and some other manifolds with
  large symmetry.
\newblock In {\em Handbook of geometric analysis. {N}o. 1}, volume~7 of {\em
  Adv. Lect. Math. (ALM)}, pages 29--75. Int. Press, Somerville, MA, 2008.

\bibitem{MR1073369}
A.~Fujiki.
\newblock The moduli spaces and {K}\"{a}hler metrics of polarized algebraic
  varieties.
\newblock {\em S\={u}gaku}, 42(3):231--243, 1990.

\bibitem{MR1053910}
A.~Fujiki and G.~Schumacher.
\newblock The moduli space of extremal compact {K}\"{a}hler manifolds and
  generalized {W}eil-{P}etersson metrics.
\newblock {\em Publ. Res. Inst. Math. Sci.}, 26(1):101--183, 1990.

\bibitem{MR1456265}
P.~Gauduchon.
\newblock Hermitian connections and {D}irac operators.
\newblock {\em Boll. Un. Mat. Ital. B (7)}, 11(2, suppl.):257--288, 1997.

\bibitem{Gauduchon-book}
P.~Gauduchon.
\newblock Calabi's extremal {K}\"{a}hler metrics: An elementary introduction.
\newblock {\em Preprint}, 2010.

\bibitem{MR2366370}
D.~Guan.
\newblock Extremal solitons and exponential {$C^\infty$} convergence of the
  modified {C}alabi flow on certain {$\Bbb{C}{\rm P}^1$} bundles.
\newblock {\em Pacific J. Math.}, 233(1):91--124, 2007.

\bibitem{MR1327154}
D.~Z.-D. Guan.
\newblock Quasi-{E}instein metrics.
\newblock {\em Internat. J. Math.}, 6(3):371--379, 1995.

\bibitem{MR4620280}
J.~He and K.~Zheng.
\newblock Hermitian {C}alabi functional in complexified orbits.
\newblock {\em Internat. J. Math.}, 34(8):Paper No. 2350047, 44, 2023.

\bibitem{MR4017922}
E.~Inoue.
\newblock The moduli space of {F}ano manifolds with {K}\"{a}hler-{R}icci
  solitons.
\newblock {\em Adv. Math.}, 357:106841, 65, 2019.

\bibitem{MR1336823}
S.~Kobayashi.
\newblock {\em Transformation groups in differential geometry}.
\newblock Classics in Mathematics. Springer-Verlag, Berlin, 1995.
\newblock Reprint of the 1972 edition.

\bibitem{MR1145263}
N.~Koiso.
\newblock On rotationally symmetric {H}amilton's equation for
  {K}\"{a}hler-{E}instein metrics.
\newblock In {\em Recent topics in differential and analytic geometry}, volume
  18-{\rm I} of {\em Adv. Stud. Pure Math.}, pages 327--337. Academic Press,
  Boston, MA, 1990.

\bibitem{MR3897025}
A.~Lahdili.
\newblock Automorphisms and deformations of conformally {K}\"{a}hler,
  {E}instein-{M}axwell metrics.
\newblock {\em J. Geom. Anal.}, 29(1):542--568, 2019.

\bibitem{MR1274118}
C.~LeBrun and S.~R. Simanca.
\newblock Extremal {K}\"{a}hler metrics and complex deformation theory.
\newblock {\em Geom. Funct. Anal.}, 4(3):298--336, 1994.

\bibitem{MR3833798}
E.~Legendre and R.~Sena-Dias.
\newblock Toric aspects of the first eigenvalue.
\newblock {\em J. Geom. Anal.}, 28(3):2395--2421, 2018.

\bibitem{MR3078263}
E.~Legendre and C.~W. T{\o}nnesen-Friedman.
\newblock Toric generalized {K}\"{a}hler-{R}icci solitons with {H}amiltonian
  2-form.
\newblock {\em Math. Z.}, 274(3-4):1177--1209, 2013.

\bibitem{MR2747965}
M.~Lejmi.
\newblock Extremal almost-{K}\"{a}hler metrics.
\newblock {\em Internat. J. Math.}, 21(12):1639--1662, 2010.

\bibitem{MR2661166}
M.~Lejmi.
\newblock Stability under deformations of extremal almost-{K}\"{a}hler metrics
  in dimension 4.
\newblock {\em Math. Res. Lett.}, 17(4):601--612, 2010.

\bibitem{MR3331165}
M.~Lejmi.
\newblock Stability under deformations of {H}ermite-{E}instein almost
  {K}\"{a}hler metrics.
\newblock {\em Ann. Inst. Fourier (Grenoble)}, 64(6):2251--2263, 2014.

\bibitem{MR4184828}
M.~Lejmi and M.~Upmeier.
\newblock Integrability theorems and conformally constant {C}hern scalar
  curvature metrics in almost {H}ermitian geometry.
\newblock {\em Comm. Anal. Geom.}, 28(7):1603--1645, 2020.

\bibitem{MR3663320}
H.~Li.
\newblock Complex deformation of critical {K}\"{a}hler metrics.
\newblock {\em J. Math. Study}, 50(2):144--164, 2017.

\bibitem{MR66733}
P.~Libermann.
\newblock Sur les connexions hermitiennes.
\newblock {\em C. R. Acad. Sci. Paris}, 239:1579--1581, 1954.

\bibitem{MR0124009}
A.~Lichnerowicz.
\newblock {\em G\'{e}om\'{e}trie des groupes de transformations}.
\newblock Travaux et Recherches Math\'{e}matiques, III. Dunod, Paris, 1958.

\bibitem{MR0165458}
A.~Lichnerowicz.
\newblock {\em Th\'{e}orie globale des connexions et des groupes d'holonomie}.
\newblock Consiglio Nazionale delle Ricerche Monografie Matematiche, Vol. 2.
  Edizioni Cremonese, Rome; Dunod, Paris, 1962.

\bibitem{MR2805600}
G.~Maschler and C.~W. T{\o}nnesen-Friedman.
\newblock Generalizations of {K}\"{a}hler-{R}icci solitons on projective
  bundles.
\newblock {\em Math. Scand.}, 108(2):161--176, 2011.

\bibitem{MR0094478}
Y.~Matsushima.
\newblock Sur la structure du groupe d'hom\'{e}omorphismes analytiques d'une
  certaine vari\'{e}t\'{e} k\"{a}hl\'{e}rienne.
\newblock {\em Nagoya Math. J.}, 11:145--150, 1957.

\bibitem{MR1637093}
S.~A. Merkulov.
\newblock Formality of canonical symplectic complexes and {F}robenius
  manifolds.
\newblock {\em Internat. Math. Res. Notices}, (14):727--733, 1998.

\bibitem{MR2831983}
Y.~Nakagawa.
\newblock On generalized {K}\"{a}hler-{R}icci solitons.
\newblock {\em Osaka J. Math.}, 48(2):497--513, 2011.

\bibitem{MR1768112}
G.~Tian and X.~Zhu.
\newblock Uniqueness of {K}\"{a}hler-{R}icci solitons.
\newblock {\em Acta Math.}, 184(2):271--305, 2000.

\bibitem{MR4437352}
A.~D. Vedova and A.~Gatti.
\newblock Almost {K}\"{a}hler geometry of adjoint orbits of semisimple {L}ie
  groups.
\newblock {\em Math. Z.}, 301(3):3141--3183, 2022.

\bibitem{MR2988734}
L.~Vezzoni.
\newblock A note on canonical {R}icci forms on {$2$}-step nilmanifolds.
\newblock {\em Proc. Amer. Math. Soc.}, 141(1):325--333, 2013.

\bibitem{MR2084775}
X.-J. Wang and X.~Zhu.
\newblock K\"{a}hler-{R}icci solitons on toric manifolds with positive first
  {C}hern class.
\newblock {\em Adv. Math.}, 188(1):87--103, 2004.

\bibitem{MR1817785}
X.~Zhu.
\newblock K\"{a}hler-{R}icci soliton typed equations on compact complex
  manifolds with {$C_1(M)>0$}.
\newblock {\em J. Geom. Anal.}, 10(4):759--774, 2000.

\end{thebibliography}

\end{document}